\newtheorem{theorem}{Theorem}[section]
\newtheorem{lemma}[theorem]{Lemma}
\newtheorem{corollary}[theorem]{Corollary}
\newtheorem{remark}[theorem]{Remark}
\title{Vertex-connectivity and $Q$-index of graphs with fixed girth}
\author{Huicai Jia \thanks{School of Mathematics, Renmin University of China, Beijing, 100872, China; College of Science, Henan
University of Engineering, Zhengzhou, Henan 451191, China. Email: jhc607@163.com},
Hong-Jian Lai \thanks{Department of Mathematics, West Virginia
University, Morgantown, WV 26506, USA. Email: hjlai@math.wvu.edu},
Ruifang Liu \thanks{Corresponding author. School of Mathematics and Statistics, Zhengzhou
University, Zhengzhou, Henan 450001, China. Email: rfliu@zzu.edu.cn},
Ju Zhou \thanks{Department of Mathematics, Kutztown University of Pennsylvania, Kutztown, PA 19530, USA. Email: zhou@kutztown.edu}}
\date{ }
\begin{document}

\maketitle

\begin{abstract}
Let $q(G)$ denote the $Q$-index of a graph $G$, which is the largest signless Laplacian eigenvalue of $G$.
We prove best possible upper bounds of $q(G)$ and best possible lower bounds of $q(\overline{G})$ for
a connected graph $G$ to be $k$-connected and maximally connected, respectively. Similar upper bounds of $q(G)$ and lower bounds of $q(\overline{G})$ to
assure $G$ to be super-connected are also obtained. Upper bounds of $q(G)$ and lower bounds of $q(\overline{G})$ to
assure a connected triangle-free graph $G$ to be $k$-connected, maximally connected and super-connected are also respectively investigated.
\end{abstract}

\bigskip
\noindent {\bf AMS Classification:} 05C50, 05C40

\noindent {\bf Keywords:} vertex-connectivity; girth; $Q$-index; triangle-free graphs; maximally connected; super-connected

\section{Introduction}
We consider simple, undirected and connected graphs. Let $G$ be a
simple graph with vertex set $V(G)$ and edge set
$E(G)$ such that $|V(G)|=n$ and $|E(G)|=m$. Thus $G$ can be viewed as a spanning subgraph of $K_n$.
Define the {\bf complement} of $G$ to be the graph $\overline{G} = K_n - E(G)$.
We denote by $d(v)$ and
$\delta(G)$ the degree of a vertex $v$ in $G$ and the minimum degree of $G$, respectively.
Let $K_{n}$ and $K_{a, b}$ denote complete graphs and complete bipartite graphs
on $n$ vertices, where $a+b=n.$ For two disjoint subsets $X$ and $Y$ of $V(G)$,
let $E(X, Y)$ be the set of edges with one end in $X$ and the other end in $Y$.
The join of $G$ and $H$, denoted by $G\vee H$, is the graph obtained from a disjoint
union of $G$ and $H$ by adding all possible edges between them.
Let $G\cup H$ and $G[V_{0}]$~$(V_{0}\subseteq V(G))$ denote the disjoint union of $G$
and $H$ and the subgraph of $G$ induced by $V_{0}$, respectively.
Assume $e=uv\in E(G)$, let $G-e$ be the subgraph of $G$ by deleting $e$ from $G$.
Let $G-V_{0}$ be the induced subgraph obtained from $G$ by deleting
the vertices of $V_{0}$ together with the edges. The girth of a graph $G$, is defined as
\[
g(G) = \left\{
\begin{array}{ll}
\min\{|E(C)|: \mbox{ $C$ is a cycle of $G\}$ } & \mbox{ if $G$ is not acyclic, }
\\
\infty & \mbox{ if $G$ is acyclic. }
\end{array} \right.
\]

A vertex subset $C$ of a connected graph $G$ is called a
vertex-cut if $G-C$ is not connected or $G-C = K_1$. The vertex connectivity $\kappa(G)$ of
a connected non-complete graph $G$ is the minimum number of vertices whose deletion disconnects $G$.
A vertex-cut $C$ is minimum if $|C|=\kappa(G)$. A well-known result of Whitney \cite{Whit32}
states that $\kappa(G)\leq \delta(G)$ for any graph $G$. A graph $G$ is {\bf $k$-connected} if
$\kappa(G)\geq k$, {\bf maximally connected} if $\kappa(G)=\delta(G)$, and
{\bf super-$\kappa$} (or {\bf super-connected}) if each minimum vertex-cut isolates a vertex of
minimum degree. Hence every super-$\kappa$ graph must be maximally connected. A triangle-free
graph is an undirected graph with no induced 3-cycle.
We follow Bondy and Murty \cite{B} for notation and terminologies not defined here.

The adjacency matrix of $G$ is the
$n\times n$ matrix $A(G)=(a_{ij})$, where $a_{ij}=1$ if $v_i$ and $v_j$ are adjacent and
otherwise $a_{ij}=0$. Let $D(G)$ be the diagonal matrix of the vertex degrees of $G$.
The matrix $Q(G) = D(G) + A(G)$ is known as the {\bf $Q$-matrix} or the {\bf signless
Laplacian} matrix  of $G$. We denote the largest eigenvalue of $Q(G)$ by $q(G)$,
which is called {\bf $Q$-index}  or the {\bf signless Laplacian spectral radius} of $G$.

There have been quite a few recent studies on the relationship between vertex-connectivity
and eigenvalues of graphs. O \cite{O} presented the relation between  vertex-connectivity
and the second largest eigenvalue of regular multigraphs. Abiad et al. \cite{A} proved upper
bounds for the second largest eigenvalues of regular graphs and multigraphs which guarantee a desired vertex-connectivity.
Recently, Liu et al. \cite{Liu} investigated functions $f(\delta, \Delta, g, k)$ with
$\Delta\geq\delta \ge k \ge 2$ and girth $g\geq3$ such that any graph $G$ satisfying $\lambda_2(G) < f(\delta, \Delta, g, k)$ has connectivity $\kappa(G)\ge k$.
On the other hand, Li \cite{Li} presented
sufficient conditions for a graph to be $k$-connected in terms of the spectral radius and $Q$-index.
Hong et al. \cite{HXC} found sufficient conditions for a connected graph and a
connected triangle-free graph with given minimum degree to be $k$-connected, maximally connected and super-connected
in terms of the spectral radius of the graph and of its complement, respectively.
Zhang et al. \cite{FZL} proved a sufficient condition for a connected graph with fixed
minimum degree to be $k$-connected based on $Q$-index for sufficiently large order $n$.

Motivated by these results, the purpose of the current research focuses on the following general problem.

\noindent{\it{\bf Problem 1.1} For a connected graph $G$ with fixed girth $g\geq3$ and minimum degree $\delta\geq k\geq2$, find optimal sufficient conditions in terms of $Q$-index of the graph and of its complement to describe the properties of being $k$-connected, maximally connected and
super-connected.}

In particular, we in this paper investigate the problem above for the two special cases: connected graphs ($g\geq3$) and connected triangle-free graphs ($g\geq4$). In the next section, we display some useful tools to be deployed in our arguments. In the subsequent sections, our main results for the generic study and for the special cases are presented and justified.

\section{Preliminaries}

We in this section will present some former results that will be utilized in our arguments.
The following bounds of the $Q$-index of a graph $G$,
stated in Lemmas \ref{le2.1} and \ref{le2.5}, are applied frequently.

\begin{lemma}(Cvetkovi\'{c}, Rowlinson and Simi\'{c} \cite{CRS})\label{le2.1}
Let $G$ be a graph with order $n$ and size $m.$ Then
$$q(G)\geq \frac{4m}{n}.$$
If $G$ is connected, then the equality holds if and only if $G$ is a regular graph.
\end{lemma}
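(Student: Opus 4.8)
The plan is to invoke the variational (Rayleigh quotient) characterization of the largest eigenvalue of a real symmetric matrix. Since $Q(G) = D(G) + A(G)$ is symmetric, we have
$$q(G) = \max_{x \in \mathbb{R}^{n},\, x \neq 0} \frac{x^{T} Q(G) x}{x^{T} x}.$$
First I would evaluate this quotient at the all-ones vector $\mathbf{1}$. A direct computation gives $\mathbf{1}^{T} D(G) \mathbf{1} = \sum_{v \in V(G)} d(v) = 2m$ and $\mathbf{1}^{T} A(G) \mathbf{1} = \sum_{i,j} a_{ij} = 2m$, so $\mathbf{1}^{T} Q(G) \mathbf{1} = 4m$, while $\mathbf{1}^{T}\mathbf{1} = n$. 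Substituting into the displayed maximum immediately yields $q(G) \geq \frac{4m}{n}$, which is the desired inequality, and this step presents no difficulty.

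For the equality case, assume $G$ is connected, so that $Q(G)$ is a nonnegative irreducible matrix; by the Perron--Frobenius theorem $q(G)$ is then a simple eigenvalue admitting a strictly positive eigenvector, unique up to scaling. The key observation I would use is that for a real symmetric matrix $M$ a nonzero vector $x$ satisfies $\frac{x^{T} M x}{x^{T} x} = \lambda_{\max}(M)$ if and only if $x$ is an eigenvector of $M$ for $\lambda_{\max}(M)$ (expand $x$ in an orthonormal eigenbasis of $M$). Hence $q(G) = \frac{4m}{n}$ forces $\mathbf{1}$ to be an eigenvector of $Q(G)$ for $q(G)$. Since $(Q(G)\mathbf{1})_i = d(v_i) + \sum_{j} a_{ij} = 2 d(v_i)$, the vector $Q(G)\mathbf{1}$ is a scalar multiple of $\mathbf{1}$ exactly when all the degrees $d(v_i)$ coincide, i.e., when $G$ is regular. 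Conversely, if $G$ is $r$-regular then $Q(G)\mathbf{1} = 2r\,\mathbf{1}$, and because $\mathbf{1}$ is a positive eigenvector it must be the Perron eigenvector, so $q(G) = 2r = \frac{4m}{n}$.

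Overall the argument is elementary. The only point requiring a little care is the characterization of the equality case: one must combine the spectral theorem (to get that equality in the Rayleigh bound forces $\mathbf{1}$ to be a top eigenvector) with the irreducibility of $Q(G)$ for connected $G$ (to identify $\mathbf{1}$ as \emph{the} Perron eigenvector precisely in the regular case). No separate obstacle beyond that is anticipated.
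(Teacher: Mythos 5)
Your argument is correct and complete: the Rayleigh quotient at the all-ones vector gives the inequality, and the equality analysis (equality in the Rayleigh bound forces $\mathbf{1}$ to be a top eigenvector, $Q(G)\mathbf{1}=2(d(v_1),\dots,d(v_n))^{T}$ identifies regularity, and Perron--Frobenius handles the converse) is the standard proof of this fact. Note that the paper itself offers no proof to compare against --- Lemma \ref{le2.1} is quoted from Cvetkovi\'{c}, Rowlinson and Simi\'{c} \cite{CRS} --- so your write-up simply supplies the classical argument, and it does so correctly; the only pedantic caveat is that the irreducibility/Perron--Frobenius step presumes $n\geq 2$, the case $n=1$ being trivially true.
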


\begin{lemma}(Feng and Yu \cite{FY})\label{le2.5}
Let $G$ be a connected graph with $n$ vertices and $m$ edges. Then
$$q(G)\leq \frac{2m}{n-1}+n-2,$$ and the equality holds if and only if $G$ is $K_{n}$ or $K_{1,n-1}.$
\end{lemma}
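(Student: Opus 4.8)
The plan is to study the unit Perron eigenvector $x$ of $Q(G)$; since $G$ is connected, $Q(G)$ is nonnegative and irreducible, so $x$ can be taken entrywise positive. The backbone is the elementary identity $Q(G)+Q(\overline G)=(n-2)I+J$, coming from $A(G)+A(\overline G)=J-I$ and $D(G)+D(\overline G)=(n-1)I$, with $J$ the all-ones matrix. Writing $s:=\mathbf 1^{\top}x$, evaluating the Rayleigh quotient gives the exact relation $q(G)=x^{\top}Q(G)x=(n-2)+s^{2}-x^{\top}Q(\overline G)x$. First I would dispose of the degenerate situations: if $G=K_{n}$ then $\overline G$ is edgeless and equality holds, while if $q(G)\le n-2$ the bound is immediate; so assume $G\neq K_{n}$ (hence $\overline m:=\binom n2-m\ge 1$) and $q:=q(G)>n-2$, and note $q\le q(K_{n})=2(n-1)$ by monotonicity of the $Q$-index under edge additions, so that $u:=2(n-1)-q\ge 0$.

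The engine of the argument is a pair of complementary estimates. Summing the eigenvalue equation $q x_{v}=d(v)x_{v}+\sum_{u\sim v}x_{u}$ over all $v$ gives $\sum_{v}d(v)x_{v}=\tfrac q2 s$, hence $\sum_{v}\overline d(v)x_{v}=(n-1)s-\tfrac q2 s=\tfrac u2 s$. Since $x^{\top}Q(\overline G)x=\sum_{uv\in E(\overline G)}(x_{u}+x_{v})^{2}$ and $\sum_{uv\in E(\overline G)}(x_{u}+x_{v})=\sum_{v}\overline d(v)x_{v}$, Cauchy--Schwarz gives $x^{\top}Q(\overline G)x\ge \tfrac{1}{\overline m}\bigl(\tfrac u2 s\bigr)^{2}$, and the identity above then yields $q-(n-2)\le s^{2}\bigl(1-\tfrac{u^{2}}{4\overline m}\bigr)$. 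Independently, $\mathbf 1-sx$ is orthogonal to $x$, so positive semidefiniteness of $Q(G)$ forces $0\le(\mathbf 1-sx)^{\top}Q(G)(\mathbf 1-sx)=4m-qs^{2}$, that is, $s^{2}\le\tfrac{4m}{q}$.

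Now I would combine the two. If $u^{2}\ge 4\overline m$ the first estimate already gives $q\le n-2$, contrary to assumption, so the factor $1-\tfrac{u^{2}}{4\overline m}$ is positive; inserting $s^{2}\le\tfrac{4m}{q}$, substituting $q=2(n-1)-u$, and clearing denominators produces (after a routine simplification using $m+\overline m=\binom n2$) the quadratic inequality $u^{2}-\tfrac{2(3n-2)\overline m}{n(n-1)}u+\tfrac{8\overline m^{2}}{n(n-1)}\le 0$, whose two roots are exactly $\tfrac{2\overline m}{n-1}$ and $\tfrac{4\overline m}{n}$, the former being the smaller for $n\ge 2$. Hence $u\ge\tfrac{2\overline m}{n-1}$, which rearranges to $q\le 2(n-1)-\tfrac{2\overline m}{n-1}=(n-2)+\tfrac{2m}{n-1}$, as desired. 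For the equality case I would retrace the chain: it forces $u=\tfrac{2\overline m}{n-1}$, equality in Cauchy--Schwarz (so $x_{u}+x_{v}$ is constant over all non-edges of $G$), and $(\mathbf 1-sx)^{\top}Q(G)(\mathbf 1-sx)=0$, whence either $x$ is a scalar multiple of $\mathbf 1$ (so $G$ is regular, and comparing $q=2r$ with the equality forces the common degree to equal $n-1$, i.e. $G=K_{n}$, excluded) or $\mathbf 1-sx\in\ker Q(G)$ (so the connected graph $G$ is bipartite with parts $X,Y$ and $x$ is constant on each part); in the latter case the three possible values of $x_{u}+x_{v}$ on a pair are distinct, so the Cauchy--Schwarz condition puts all edges of $\overline G$ inside $X$, or all inside $Y$, or all across, and using that $G$'s edges all run between $X$ and $Y$ the first two options pin $\overline G$ down to $K_{n-1}\cup K_{1}$, giving $G=K_{1,n-1}$, while the third is impossible for $n\ge 3$. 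Conversely both $K_{n}$ and $K_{1,n-1}$ attain the bound by direct check.

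The main obstacle is that neither ingredient suffices by itself: using only positive semidefiniteness of $Q(\overline G)$ gives merely $q(G)\le 2(n-1)$, whereas the Cauchy--Schwarz/projection estimate alone only recovers Lemma~\ref{le2.1}, $q(G)\ge 4m/n$. The crux is to see that the two together trap $u=2(n-1)-q(G)$ between the roots $\tfrac{2\overline m}{n-1}$ and $\tfrac{4\overline m}{n}$ of a single quadratic; the remaining delicate point is the equality analysis, where non-complete regular graphs must be ruled out and the bipartite structure must be used carefully to isolate $K_{1,n-1}$.
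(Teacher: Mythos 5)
The paper offers no proof of Lemma~\ref{le2.5}: it is quoted from Feng and Yu \cite{FY} and used as a black box, so there is nothing internal to compare against. Your argument is a correct, self-contained derivation, and I have checked the computations. The identity $Q(G)+Q(\overline{G})=(n-2)I+J$ and the relation $q=(n-2)+s^{2}-x^{\top}Q(\overline{G})x$ are right; summing the eigenvalue equation indeed gives $\sum_{v}\overline{d}(v)x_{v}=\tfrac{u}{2}s$, so Cauchy--Schwarz over the $\overline{m}$ non-edges yields $x^{\top}Q(\overline{G})x\ge u^{2}s^{2}/(4\overline{m})$; the expansion $(\mathbf{1}-sx)^{\top}Q(G)(\mathbf{1}-sx)=4m-qs^{2}$ is correct; and after substituting $q=2(n-1)-u$ and $4m=2n(n-1)-4\overline{m}$ the combined inequality does reduce to $u^{2}-\tfrac{2(3n-2)\overline{m}}{n(n-1)}u+\tfrac{8\overline{m}^{2}}{n(n-1)}\le 0$, whose discriminant is $\tfrac{4\overline{m}^{2}(n-2)^{2}}{n^{2}(n-1)^{2}}$ and whose roots are exactly $\tfrac{2\overline{m}}{n-1}\le\tfrac{4\overline{m}}{n}$; the smaller root gives the stated bound and the larger one recovers Lemma~\ref{le2.1}, a nice bonus. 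The equality analysis is also sound: tightness forces equality throughout the chain, hence either $x\propto\mathbf{1}$ (regular, and $2r=\tfrac{nr}{n-1}+n-2$ forces $r=n-1$, i.e.\ $K_{n}$) or $\mathbf{1}-sx$ spans $\ker Q(G)$ (so $G$ is connected bipartite with $x$ two-valued), and constancy of $x_{u}+x_{v}$ over non-edges then forces one part to be a singleton joined to everything, i.e.\ $K_{1,n-1}$. This route, which plays $G$ against $\overline{G}$ and traps $2(n-1)-q(G)$ between the two roots of a single quadratic, is different in flavor from the usual local arguments for such bounds and is arguably more informative. Two cosmetic remarks: the orthogonality of $\mathbf{1}-sx$ to $x$ is never used (positive semidefiniteness of $Q(G)$ alone gives $4m-qs^{2}\ge 0$), and you should record the standing assumption $n\ge 2$ so that $n-1>0$ and $\overline{m}\ge 1$ whenever $G\ne K_{n}$, which legitimizes the division in the Cauchy--Schwarz step.
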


Given positive integers $\delta$, $g$ and $\kappa$, define $\displaystyle  t = \lfloor \frac{g-1}{2} \rfloor$ and
\begin{equation*} \label{nu0}
\nu(\delta, g, \kappa) =
\left\{
\begin{array}{ll}
1+(\delta-\kappa)\sum_{i=0}^{t-1}(\delta-1)^{i} &\,~~~ \text{\mbox{if}~ $g=2t+1$},\\
2+(2\delta-2-\kappa)\sum_{i=0}^{t-1}(\delta-1)^{i} &\, ~~~\text{\mbox{if}~ $g=2t+2$ \mbox{and} $\delta\geq3$},\\
2t+1 &\, ~~~\text{\mbox{if}~ $g=2t+2$ \mbox{and} $\delta=2$}.
\end{array}
\right.
\end{equation*}

Liu et al. \cite{Liu} proved the following result which is crucial to our main results in Section 3.

\begin{lemma}(Liu, Lai, Tian and Wu \cite{Liu})\label{le2.10}
Let $G$ be a simple connected graph with $\kappa(G)=\kappa$, minimum degree $\delta\geq k\ge2$ and girth $g\ge3.$ Let $C$ be a minimum vertex cut of $G$ with $|C|=\kappa$ and $V_{0}$ be a connected component of $G-C$. If $\kappa\leq k-1<\delta$, then $$|V_{0}|\geq \nu(\delta, g, \kappa)\geq\nu(\delta, g, k-1).$$
\end{lemma}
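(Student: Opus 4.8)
The plan is to establish a ``defective'' version of the classical Moore bound, carried out inside the subgraph $G^{*}:=G[V_{0}\cup C]$. In $G^{*}$ every vertex of $V_{0}$ still satisfies $d_{G^{*}}(x)=d_{G}(x)\ge\delta$, whereas the $\kappa$ vertices of $C$ may have small degree; they will act as a bounded set of ``defects.'' I would first record three easy facts. Since $\kappa\le k-1<\delta$, every $x\in V_{0}$ has at least $\delta-\kappa\ge1$ neighbors inside $V_{0}$; hence $|V_{0}|\ge2$, $G[V_{0}]$ is connected with no isolated vertex, and it contains at least one edge. Next, minimality of $C$ forces every vertex of $C$ to have a neighbor in $V_{0}$, since otherwise deleting that vertex from $C$ would leave a strictly smaller cut still separating $V_{0}$. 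Finally, the asserted inequality $\nu(\delta,g,\kappa)\ge\nu(\delta,g,k-1)$ is immediate, because $\nu(\delta,g,\cdot)$ is non-increasing in its last argument and $\kappa\le k-1$. It then remains to prove $|V_{0}|\ge\nu(\delta,g,\kappa)$, which I would do by cases on $g$.

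When $g=2t+1$, fix $v\in V_{0}$ and run a breadth-first search in $G^{*}$ from $v$, with layers $L_{0}=\{v\},L_{1},\dots,L_{t}$. The girth estimates underlying the usual Moore bound (a chord inside a layer of index $\le t-1$, a vertex with two neighbors in the preceding layer, or two vertices of layer $i\le t-1$ with a common neighbor in layer $i+1$ would each create a cycle of length $<2t+1$) show that every vertex of $L_{i}\cap V_{0}$ with $1\le i\le t-1$ has exactly one neighbor in $L_{i-1}$ and at least $\delta-1$ neighbors in $L_{i+1}$, that $v$ has $\ge\delta$ neighbors in $L_{1}$, and that every vertex of $L_{i+1}$ with $i\le t-1$ has a unique neighbor in $L_{i}$. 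Writing $b_{i}=|L_{i}\cap V_{0}|$ and $a_{i}=|L_{i}\cap C|$ (so $a_{0}=0$ and $\sum_{i}a_{i}\le\kappa$), this gives $|L_{i+1}|=a_{i+1}+b_{i+1}\ge(\delta-1)b_{i}$ for $1\le i\le t-1$ together with $|L_{1}|\ge\delta$; an induction then yields $b_{i}\ge(\delta-a_{1})(\delta-1)^{i-1}-\sum_{j=2}^{i}a_{j}(\delta-1)^{i-j}$. Summing over $0\le i\le t$, exchanging the order of summation, and using $\sum_{m=0}^{t-j}(\delta-1)^{m}\le\sum_{m=0}^{t-1}(\delta-1)^{m}$ collapses everything to $|V_{0}|\ge\sum_{i}b_{i}\ge1+(\delta-\kappa)\sum_{i=0}^{t-1}(\delta-1)^{i}=\nu(\delta,2t+1,\kappa)$.

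When $g=2t+2$ and $\delta\ge3$, I would choose an edge $u_{1}u_{2}\subseteq V_{0}$ and run the two-source analogue in $G^{*}$: for each vertex $w$ let $\alpha(w)$ and $\beta(w)$ be the distances in $G^{*}$ from $w$ to $u_{1}$ and to $u_{2}$, and put $U_{i}=\{w:\alpha(w)=i<\beta(w)\}$ and $W_{i}=\{w:\beta(w)=i<\alpha(w)\}$, so $U_{0}=\{u_{1}\}$ and $W_{0}=\{u_{2}\}$. Girth $\ge2t+2$ makes the $2(t+1)$ sets $U_{0},\dots,U_{t},W_{0},\dots,W_{t}$ pairwise disjoint and forces the same tree-like expansion as before, except that the two roots now each contribute only $\delta-1$; thus, setting $\bar u_{i}=|U_{i}\cap C|$ and $\bar w_{i}=|W_{i}\cap C|$ (so $\sum_{i}(\bar u_{i}+\bar w_{i})\le\kappa$), one obtains $|U_{i}|\ge(\delta-1)^{i}-\sum_{j=1}^{i-1}\bar u_{j}(\delta-1)^{i-j}$ and its analogue for $W$. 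Summing over $i$ and discarding the vertices lying in $C$ gives $\sum_{i}|U_{i}\cap V_{0}|\ge1+\sum_{i=1}^{t}(\delta-1)^{i}-\big(\sum_{i}\bar u_{i}\big)\sum_{m=0}^{t-1}(\delta-1)^{m}$, and adding this to the $W$-version yields $|V_{0}|\ge2\sum_{i=0}^{t}(\delta-1)^{i}-\kappa\sum_{m=0}^{t-1}(\delta-1)^{m}=\nu(\delta,2t+2,\kappa)$.

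The case $g=2t+2$, $\delta=2$ needs a separate, direct argument, since there the Moore-type count is too weak: now $\delta\ge k\ge2$ forces $k=2$ and hence $\kappa=1$, say $C=\{c\}$. If $c$ has at least two neighbors in $V_{0}$, then $G^{*}$ has minimum degree $\ge2$ and girth $\ge2t+2$, so it contains a cycle of length $\ge2t+2$ and $|V_{0}|=|V(G^{*})|-1\ge2t+1$; otherwise $c$ has a unique neighbor in $V_{0}$, so $G[V_{0}]$ is connected with at most one vertex of degree $<2$, hence is not a tree and contains a cycle of length $\ge2t+2$, whence $|V_{0}|\ge2t+2>2t+1=\nu(\delta,2t+2,1)$. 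I expect the main obstacle to be the even-girth, $\delta\ge3$ case: making the disjointness of the $U_{i},W_{i}$ and the ``at least $\delta-1$ fresh vertices per layer'' estimates genuinely follow from the girth bound requires careful cycle-length accounting, especially for edges running between the two sides (such an edge closes a cycle through $u_{1}u_{2}$, and one must confirm it stays short after routing around any common vertex of the two geodesics to $u_{1}$ and $u_{2}$); recognizing that $\delta=2$ escapes this scheme is the other point that needs attention.
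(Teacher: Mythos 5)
The paper does not prove Lemma~\ref{le2.10} at all: it is imported verbatim from Liu, Lai, Tian and Wu \cite{Liu}, so there is no in-paper argument to compare against. Your proposal supplies a self-contained proof, and it is essentially correct; it is the natural ``defective Moore bound'' argument (and, as far as I can tell, the same strategy as in \cite{Liu}): work in $G^{*}=G[V_{0}\cup C]$, where every vertex of $V_{0}$ keeps degree $\ge\delta$ and the at most $\kappa$ vertices of $C$ are the only possible defects, run a BFS from a vertex of $V_{0}$ (odd girth) or from an edge of $G[V_{0}]$ (even girth, which exists since each vertex of $V_{0}$ has $\ge\delta-\kappa\ge1$ neighbours in $V_{0}$), and propagate the loss of at most $\sum_i a_i\le\kappa$ through the layers. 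I checked the bookkeeping: the recursion $b_{i+1}\ge(\delta-1)b_i-a_{i+1}$ with $b_1\ge\delta-a_1$, the exchange of summation, and the final collapse do give $1+(\delta-\kappa)\sum_{i=0}^{t-1}(\delta-1)^{i}$ in the odd case, and the two-rooted version gives $2\sum_{i=0}^{t}(\delta-1)^{i}-\kappa\sum_{m=0}^{t-1}(\delta-1)^{m}=\nu(\delta,2t+2,\kappa)$ in the even case; the monotonicity of $\nu(\delta,g,\cdot)$ and the separate treatment of $g=2t+2$, $\delta=2$ (where the layer count only yields $t+2$ and one must instead exhibit a cycle of length $\ge g$ inside $G^{*}$ or $G[V_{0}]$) are both handled correctly. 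The one place a referee would ask you to write out details is the step you yourself flag: ruling out $U_i$--$W_i$ edges for $i\le t-1$ and two back-neighbours in a layer requires extracting a genuine short cycle from an even closed walk; the clean way is to observe that the walk traverses the offending edge exactly once (the geodesics involved are too short to contain it), so a cycle through that edge of length at most the walk length exists, which is $\le 2t<g$. With that spelled out, the proof is complete.
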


In \cite{FS}, F\"{u}redi et al. proved the following girth and Tur\'{a}n number result.

\begin{lemma}(F\"{u}redi and Simonovits \cite{FS})\label{le2.11}
Let $G$ be a simple connected graph with order $n$, size $m$ and girth $g\ge3.$ Then
\begin{equation*} \label{nu0}
m<
\left\{
\begin{array}{ll}
\frac{1}{2}n^{1+\frac{1}{t}}+\frac{1}{2}n &\,~~~  \text{\mbox{if}~ $g=2t+1$},\\
\frac{1}{2^{1+\frac{1}{t}}}n^{1+\frac{1}{t}}+\frac{1}{2}n &\, ~~~\text{\mbox{if}~ $g=2t+2$}.
\end{array}
\right.
\end{equation*}
\end{lemma}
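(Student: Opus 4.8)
\medskip
\noindent\emph{Proof strategy.} The plan is to deduce the estimate from a Moore-type lower bound on $n$ phrased through the \emph{average} degree $d:=2m/n$, and then to finish by elementary algebra. We may assume $G$ contains a cycle (otherwise $m=n-1$ and there is nothing to prove), so, $G$ being connected, $m\ge n$ and $d\ge2$. We may also assume $t\ge3$: for $t=1$ the bound is Mantel's theorem (or just $m\le\binom{n}{2}$), and for $t=2$ it follows immediately from Cauchy--Schwarz applied to $\sum_v d(v)^2$. The target is the \emph{irregular Moore bound}: $n\ge 1+d\sum_{i=0}^{t-1}(d-1)^{i}$ when $g=2t+1$, and $n\ge 2\sum_{i=0}^{t}(d-1)^{i}$ when $g=2t+2$.

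To establish this I would fix a vertex $v$ (when $g=2t+1$) or an edge $xy$ (when $g=2t+2$) and examine the breadth-first-search levels $L_0,L_1,\dots,L_t$ around it. Since $g\ge7$, a routine girth argument shows that these levels span a forest: each vertex of $L_i$ with $1\le i\le t$ has exactly one neighbour in $L_{i-1}$, and for $i\le t-1$ no neighbour inside $L_i$; hence $|L_{i+1}|=\sum_{u\in L_i}(d(u)-1)$ for $1\le i\le t-1$, while $|L_1|=d(v)$ in the odd case and $|L_1|=(d(x)-1)+(d(y)-1)$ in the even case. Summing $\sum_{i=0}^{t}|L_i|\le n$ and bounding every $d(u)$ from below by $\delta$ already gives the Moore bound with the \emph{minimum} degree $\delta$ in place of $d$.

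Upgrading $\delta$ to the average degree $d$ is the crux, and the step I expect to be the main obstacle: a direct convexity estimate is not legitimate here because in $\sum_{u\in L_i}(d(u)-1)$ the vertex $u$ runs over a ball and not over all of $V(G)$, so Jensen cannot be applied to the degree sequence. I would invoke here the average-degree Moore bound of Alon, Hoory and Linial, whose proof passes through an entropy/random-walk estimate (equivalently, through the spectral radius of the universal cover tree) to show that among degree sequences of a prescribed average the regular one minimises the averaged level counts; this is exactly the tool on which F\"uredi and Simonovits rely. A self-contained but weaker substitute is to pass to a subgraph $H\subseteq G$ with $\delta(H)\ge d/2$ (every graph has such a subgraph), apply the minimum-degree Moore bound to $H$, and use $|V(H)|\le n$; this yields only $m<n^{1+1/t}+n$, thus losing a factor of about two against the stated constant.

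Granting the irregular Moore bound, the conclusion is immediate. If $g=2t+1$, then $1+d\sum_{i=0}^{t-1}(d-1)^{i}\ge 1+d(d-1)^{t-1}=1+(d-1)^{t}+(d-1)^{t-1}>(d-1)^{t}$, so $(2m/n-1)^{t}=(d-1)^{t}<n$; hence $2m/n<n^{1/t}+1$ and therefore $m<\frac{1}{2}n^{1+\frac{1}{t}}+\frac{1}{2}n$. If $g=2t+2$, then $2\sum_{i=0}^{t}(d-1)^{i}=2(d-1)^{t}+2\sum_{i=0}^{t-1}(d-1)^{i}>2(d-1)^{t}$ (the last sum being at least $1$ since $d\ge2$), so $(d-1)^{t}<n/2$; hence $2m/n<(n/2)^{1/t}+1$ and therefore $m<\frac{1}{2^{1+\frac{1}{t}}}n^{1+\frac{1}{t}}+\frac{1}{2}n$, as claimed.
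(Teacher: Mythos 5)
The paper does not prove this lemma at all: it is quoted verbatim from the F\"uredi--Simonovits survey, so there is no internal proof to compare against. Your route --- deduce the bound from the irregular (average-degree) Moore bound of Alon, Hoory and Linial, then solve $(d-1)^t<n$ resp.\ $(d-1)^t<n/2$ for $d=2m/n$ --- is exactly how the result is established in the cited reference, and your closing algebra in both parity cases is correct (including the factor $2^{-1-1/t}$ coming from the extra factor $2$ in the even-girth Moore bound). You also correctly diagnose the one genuinely nontrivial point, namely that Jensen cannot be applied level-by-level to upgrade the minimum-degree Moore bound to an average-degree one, and you name the right tool; since the statement under review is itself a citation, treating Alon--Hoory--Linial as a black box is reasonable, though it does mean the only hard step is outsourced rather than proved.

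Two small corrections. First, the reduction to $t\ge3$ is both unnecessary and slightly wrong: the BFS-levels argument needs only girth $\ge 2t+1$ (around a vertex) or $\ge 2t+2$ (around an edge), which is exactly the hypothesis, so it covers $t=1,2$ as well; meanwhile your claim that $t=2$ ``follows immediately from Cauchy--Schwarz'' is fine for $g=5$ but not for $g=6$, where counting common neighbours only yields $m<\frac12 n^{3/2}+\frac12 n$ and misses the required constant $2^{-3/2}$. Second, for the even case you should note that $d\ge2$ is needed so that $\sum_{i=0}^{t-1}(d-1)^i\ge1$; you do secure this by reducing to the case where $G$ contains a cycle, so the step is sound, but it deserves to be flagged since for a tree the displayed chain would degenerate.
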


\begin{lemma}\label{le2.2}
Let $G=K_{\kappa}\vee(K_{a}\cup K_{b}),$ where $\delta-\kappa+1\leq a\leq b\leq n-\delta-1$ and $a+b=n-\kappa$,  then
$q(G)>n-2.$
\end{lemma}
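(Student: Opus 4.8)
The plan is to exhibit a vector whose Rayleigh quotient with $Q(G)$ exceeds $n-2$, which by the variational characterization $q(G) = \max_{x \ne 0} \frac{x^{\top}Q(G)x}{x^{\top}x}$ forces $q(G) > n-2$. Write $G = K_{\kappa}\vee(K_a \cup K_b)$, and label the three parts $A$ (the $K_a$), $B$ (the $K_b$) and $S$ (the $K_{\kappa}$), so $n = a + b + \kappa$. Since $a,b \le n-\delta-1 \le n-2$ (using $\delta \ge 2$), both $A$ and $B$ are nonempty, and since $a \ge \delta - \kappa + 1 \ge 1$ the cut $S$ behaves as expected; in particular $G$ is connected and not complete whenever $\kappa < n-1$, so $q(G) < n$ and $n-2$ is a meaningful target. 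A clean first attempt is the indicator vector of a single maximal clique inside $G$: vertices in $A\cup S$ induce a $K_{a+\kappa}$, so taking $x$ to be the $0$–$1$ vector supported on $A\cup S$ gives $\frac{x^{\top}Q(G)x}{x^{\top}x} \ge q(K_{a+\kappa}) \cdot \frac{?}{?}$ — more carefully, one computes $x^{\top}Q(G)x$ directly: each vertex $v$ in the support contributes $d_G(v)$ to the diagonal term and the off-diagonal terms count twice the edges inside the support, giving $x^{\top}Q(G)x = \sum_{v \in A\cup S} d_G(v) + 2\,e(G[A\cup S])$.

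Carrying this out, every vertex of $A\cup S$ has degree at least $a+\kappa-1$ in $G[A\cup S]=K_{a+\kappa}$, and vertices of $S$ have extra neighbors in $B$, so $\sum_{v\in A\cup S} d_G(v) \ge (a+\kappa)(a+\kappa-1) + \kappa b$, while $2e(G[A\cup S]) = (a+\kappa)(a+\kappa-1)$. Hence
\[
\frac{x^{\top}Q(G)x}{x^{\top}x} \ge \frac{2(a+\kappa)(a+\kappa-1) + \kappa b}{a+\kappa}
= 2(a+\kappa-1) + \frac{\kappa b}{a+\kappa}.
\]
So it suffices to show $2(a+\kappa-1) + \frac{\kappa b}{a+\kappa} > n-2 = a+b+\kappa-2$, i.e. $a + \kappa > b - \frac{\kappa b}{a+\kappa} = \frac{ab}{a+\kappa}$, i.e. $(a+\kappa)^2 > ab$. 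Since $a \le b$ this is not automatic; when $a$ is small and $b$ is large it can fail. The remedy is to use instead the larger clique $B\cup S \cong K_{b+\kappa}$ (by symmetry of the computation, swapping $a \leftrightarrow b$), which yields the condition $(b+\kappa)^2 > ab$; and since $b \ge a$ we have $(b+\kappa)^2 \ge b^2 \ge ab$ with equality only if $a=b$ and $\kappa=0$, which is excluded because $\kappa \ge 1$ (indeed if $\kappa=0$ the graph is disconnected, but here $\kappa$ plays the role of a genuine vertex-cut size $\ge \delta-b+\dots$; in any case $\kappa \ge 1$ since $\delta \ge k \ge 2 > 1 \ge a$ would be absurd — more simply $\kappa \ge \delta - b + \dots$, and we may assume $\kappa \ge 1$ as otherwise there is nothing in the join). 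Thus $(b+\kappa)^2 > b^2 \ge ab$ strictly, giving $q(G) \ge \frac{x^{\top}Q(G)x}{x^{\top}x} > n-2$.

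The one point that needs care — and the main (minor) obstacle — is justifying the strict inequality at the boundary: if $\kappa = 0$ the statement is false (two disjoint cliques can have $Q$-index less than $n-2$), so the proof must invoke that in the context where this lemma is applied $\kappa = \kappa(G) \ge 1$, equivalently that $K_{\kappa}\vee(\cdot)$ genuinely has a nonempty join part. I would state at the outset that $\kappa \ge 1$ (which is forced since the lemma is used with $\delta \ge k \ge 2$ and a minimum vertex cut $C$ with $|C| = \kappa \ge 1$), and then the chain $(b+\kappa)^2 = b^2 + 2b\kappa + \kappa^2 > b^2 \ge ab$ closes everything. Alternatively, if one prefers a proof not relying on the ambient hypotheses, note the case $\kappa = 0$ is genuinely outside the lemma's scope since then $G$ is disconnected and the inequality chain degenerates; so the hypothesis $\kappa \ge 1$ (implicit in writing $G$ as a nontrivial join with connectivity parameter $\kappa$) is exactly what is needed. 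I would also remark that Lemma~\ref{le2.1} gives the weaker bound $q(G) \ge \frac{4m}{n}$ with $m = \binom{\kappa}{2} + \binom{a}{2} + \binom{b}{2} + \kappa(a+b)$, which after simplification is not quite strong enough in the worst case ($a$ small), confirming that the clique-indicator test vector on the \emph{larger} side is the right tool.
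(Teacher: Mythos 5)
Your argument is correct, but it takes a genuinely different route from the paper's. You bound $q(G)$ from below by the Rayleigh quotient of $Q(G)$ at the indicator vector of the larger clique $B\cup S\cong K_{b+\kappa}$, which after the (correct) computation reduces the claim to $(b+\kappa)^2>ab$; this holds strictly because $b\ge a\ge 1$ and $\kappa\ge1$. The paper instead applies Lemma~\ref{le2.1} with the exact edge count $m=\binom{n}{2}-ab$: since $4ab\le (a+b)^2=(n-\kappa)^2\le (n-1)^2$, one gets $q(G)\ge \frac{4m}{n}=2(n-1)-\frac{4ab}{n}\ge 2(n-1)-\frac{(n-1)^2}{n}=n-\frac{1}{n}>n-2$. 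Both proofs need $\kappa\ge1$ at exactly one point (you for strictness of $(b+\kappa)^2>b^2$, the paper for $(n-\kappa)^2\le(n-1)^2$), and you are right to flag that this hypothesis is imported from the lemma's context, where $\kappa$ is the size of a vertex-cut of a connected non-complete graph. Your localization on the dense side is a perfectly sound alternative; the paper's global edge-count argument is shorter and reuses a lemma already on hand.

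Two side remarks in your write-up are wrong, though neither is load-bearing. First, the parenthetical claim that $q(G)<n$ is false: every vertex of $S$ has degree $n-1$, and in fact $q(G)\ge\Delta(G)+1=n$ (a standard signless Laplacian bound), which by itself would already finish the proof. Second, your closing assertion that the bound $q(G)\ge 4m/n$ of Lemma~\ref{le2.1} is ``not quite strong enough in the worst case ($a$ small)'' is mistaken: that bound is weakest in the balanced case $a=b$, not when $a$ is small (small $a$ makes $ab$ small and $4m/n$ close to $2(n-1)$), and even in the balanced case it yields $n-\frac{1}{n}>n-2$. Indeed this edge-count route is precisely the paper's proof.
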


\begin{proof}
Since $\delta-\kappa+1\leq a\leq b\leq n-\delta-1$ and $a+b=n-\kappa$, we have $a\leq \frac{n-\kappa}{2}\leq b$.
By Lemma \ref{le2.1}, then
\begin{eqnarray*}
q(G)&\geq& \frac{4m}{n} = \frac{4}{n}[\frac{n(n-1)}{2}-ab] = 2(n-1)-\frac{4}{n}a(n-\kappa-a)\\
    &=&2(n-1)+\frac{4}{n}[(a-\frac{n-\kappa}{2})^{2}-\frac{(n-\kappa)^{2}}{4}]
     \geq  2(n-1)-\frac{4}{n}\frac{(n-\kappa)^{2}}{4}\\
    &\geq& 2(n-1)-\frac{4}{n}\frac{(n-1)^{2}}{4} > n-2.
\end{eqnarray*}
The result follows.\hspace*{\fill}$\Box$
\end{proof}

The following Tur\'{a}n's Theorem is well known.

\begin{theorem}(Mantel \cite{MW} and Tur\'{a}n \cite{TP})\label{le2.13}
For any triangle-free graph $G$ of order $n$ and size $m$, we have
$$m\leq \lfloor\frac{1}{4}n^{2}\rfloor,$$ with equality if and only if
$G\cong K_{\lfloor\frac{n}{2}\rfloor, \lceil\frac{n}{2}\rceil}$.
\end{theorem}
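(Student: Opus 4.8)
\noindent The plan is to establish the inequality $m\le\lfloor n^2/4\rfloor$ by a short double--counting argument and then read off the equality case. The one elementary fact driving everything is that in a triangle--free graph two adjacent vertices $u,v$ have disjoint neighbourhoods, so
$$d(u)+d(v)=|N(u)\cup N(v)|\le n\quad\text{for every edge } uv\in E(G).$$
Summing this over all edges and noting that $\sum_{uv\in E(G)}(d(u)+d(v))=\sum_{v\in V(G)}d(v)^2$ gives $\sum_{v}d(v)^2\le mn$. Cauchy--Schwarz (equivalently, convexity of $t\mapsto t^2$) gives $\sum_{v}d(v)^2\ge\frac{1}{n}\big(\sum_v d(v)\big)^2=\frac{4m^2}{n}$, so $\frac{4m^2}{n}\le mn$, i.e. $m\le\frac{n^2}{4}$; since $m$ is an integer this is $m\le\lfloor n^2/4\rfloor$.

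For the characterisation of the extremal graphs, suppose $m=\lfloor n^2/4\rfloor$ and fix any edge $uv$. The graph $G-u-v$ is triangle--free on $n-2$ vertices, hence has at most $\lfloor(n-2)^2/4\rfloor$ edges by the bound just proved, while the number of edges of $G$ meeting $\{u,v\}$ is exactly $d(u)+d(v)-1$ because $N(u)\cap N(v)=\emptyset$. Thus $\lfloor n^2/4\rfloor=m\le\lfloor(n-2)^2/4\rfloor+d(u)+d(v)-1$, and the arithmetic identity $\lfloor n^2/4\rfloor-\lfloor(n-2)^2/4\rfloor=n-1$ forces $d(u)+d(v)=n$. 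Since $uv$ was arbitrary, every edge of $G$ satisfies $d(u)+d(v)=n$. For such an edge, $N(u)$ and $N(v)$ are disjoint independent sets (independence again from triangle--freeness) with $|N(u)|+|N(v)|=n$, so $V(G)=N(u)\sqcup N(v)$ and $G$ is bipartite with parts $N(u),N(v)$. Applying the same degree condition to an edge $ua$ with $a\in N(u)$ shows $V(G)=N(u)\sqcup N(a)$, whence any $b\in N(v)$, being outside $N(u)$, lies in $N(a)$; thus $a\sim b$, and $G$ is the complete bipartite graph $K_{a,b}$ with $a+b=n$. Finally $m=ab=\lfloor n^2/4\rfloor$ forces $\{a,b\}=\{\lfloor n/2\rfloor,\lceil n/2\rceil\}$, and conversely $K_{\lfloor n/2\rfloor,\lceil n/2\rceil}$ is triangle--free with $\lfloor n/2\rfloor\lceil n/2\rceil=\lfloor n^2/4\rfloor$ edges.

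The only point requiring real care is the bookkeeping with the floor functions: the neat step $\frac{4m^2}{n}\le mn$ loses the precise extremal graph when $n$ is odd, since the Cauchy--Schwarz bound is then not attained, which is exactly why I route the equality analysis through the vertex--deletion inequality instead — the identity $\lfloor n^2/4\rfloor-\lfloor(n-2)^2/4\rfloor=n-1$ absorbs all the parity. A fully self--contained alternative is a direct induction on $n$ (base cases $n\le 2$) in which one deletes the two endpoints of an edge; it is the same computation packaged differently, and it likewise needs only the statement ``$d(u)+d(v)=n$ for every edge'' in the equality discussion, not the full inductive equality hypothesis.
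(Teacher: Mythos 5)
Your proof is correct. Note that the paper does not prove this statement at all: it is quoted as a classical theorem of Mantel and Tur\'{a}n with citations only, so there is no in-paper argument to compare against. Your two-stage argument --- the double count $\sum_v d(v)^2=\sum_{uv\in E}(d(u)+d(v))\le mn$ combined with Cauchy--Schwarz for the bound, then the vertex-deletion inequality $m\le\lfloor (n-2)^2/4\rfloor+d(u)+d(v)-1$ together with $\lfloor n^2/4\rfloor-\lfloor (n-2)^2/4\rfloor=n-1$ to force $d(u)+d(v)=n$ on every edge and hence $G\cong K_{a,b}$ --- is a standard and complete proof, and you are right that routing the equality analysis through the deletion step is what handles the odd-$n$ parity that the Cauchy--Schwarz step alone would lose. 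One small imprecision: the count of edges meeting $\{u,v\}$ equals $d(u)+d(v)-1$ by inclusion--exclusion, because the edge $uv$ itself is the unique edge incident to both endpoints and is counted twice in $d(u)+d(v)$; this has nothing to do with $N(u)\cap N(v)=\emptyset$, which is the reason for the upper bound $d(u)+d(v)\le n$, not for the $-1$. This does not affect the validity of the argument.
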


\section{Vertex-connectivity and $Q$-index of graphs with fixed girth}
Motivated by the methods deployed in \cite{HXC, Liu}, in this section, we mainly give
sufficient conditions on $q(G)$ and $q(\overline{G})$ to predict a connected
graph $G$ with fixed girth $g$ to be $k$-connected.

First, we present a crucial and technical lemma.
\begin{lemma}\label{le3.1}
Let $G$ be a connected graph of order $n$, size $m$, minimum degree $\delta\geq k\geq2$ and girth $g\geq3.$
Define $\nu=\nu(\delta, g, k-1)$. If
\begin{equation} \label{nu-0}
m(G) \geq
\left\{
\begin{array}{ll}
\frac{1}{2}(\nu+k-1)^{1+\frac{1}{t}}+\frac{1}{2}(n-\nu)^{1+\frac{1}{t}}+\frac{1}{2}(n+k-1) &\,~~~ \text{\mbox{if}~ $g=2t+1$},\\
\frac{1}{2^{1+\frac{1}{t}}}(\nu+k-1)^{1+\frac{1}{t}}+\frac{1}{2^{1+\frac{1}{t}}}(n-\nu)^{1+\frac{1}{t}}+\frac{1}{2}(n+k-1) &\, ~~~\text{\mbox{if}~ $g=2t+2$},
\end{array}
\right.
\end{equation}
then $G$ is $k$-connected.
\end{lemma}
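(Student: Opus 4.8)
The plan is to argue by contradiction: assume that $G$ satisfies the size bound in \eqref{nu-0} but $\kappa(G) = \kappa \leq k-1 < \delta$. Let $C$ be a minimum vertex cut with $|C| = \kappa$, and let $V_0$ be a smallest connected component of $G - C$; write $|V_0| = n_0$. By Lemma \ref{le2.10}, $n_0 \geq \nu = \nu(\delta, g, k-1)$. The idea is to split $G$ into two pieces along the cut $C$ and bound the edges of each piece separately via the Füredi--Simonovits bound (Lemma \ref{le2.11}), since every subgraph of $G$ still has girth at least $g$.

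First I would bound $m(G)$ from above in terms of $n_0$, $\kappa$, and $n$. Every edge of $G$ lies either inside $G_1 := G[V_0 \cup C]$ (which has $n_0 + \kappa$ vertices) or inside $G_2 := G[(V(G)\setminus V_0)]$ (which has $n - n_0$ vertices, and contains $C$), but edges inside $G[C]$ are counted twice; since $C$ is a clique in the worst case, we overcount by at most $\binom{\kappa}{2}$. So $m(G) \leq m(G_1) + m(G_2) \leq m(G_1) + m(G_2) $ with the convention that the $G[C]$ edges, at most $\binom{\kappa}{2} \le \binom{k-1}{2}$, are double-counted. Applying Lemma \ref{le2.11} to $G_1$ and $G_2$ (each has girth $\geq g$, or is acyclic in which case the bound still holds since a forest on $p$ vertices has fewer than $\frac12 p^{1+1/t} + \frac12 p$ edges), I would get, in the odd case $g = 2t+1$,
\begin{equation*}
m(G) < \tfrac12 (n_0+\kappa)^{1+\frac1t} + \tfrac12(n_0+\kappa) + \tfrac12(n-n_0)^{1+\frac1t} + \tfrac12(n-n_0) + \binom{\kappa}{2},
\end{equation*}
and the analogous bound with $\frac{1}{2^{1+1/t}}$ coefficients in the even case.

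The main obstacle — the part requiring the most care — is to show that this upper bound on $m(G)$ is strictly smaller than the lower bound in \eqref{nu-0}, producing the contradiction. This reduces to a monotonicity argument: one must check that the function $n_0 \mapsto (n_0+\kappa)^{1+\frac1t} + (n-n_0)^{1+\frac1t}$ (with $\kappa \le k-1$ fixed, or after further estimating $\kappa$ by $k-1$) is monotone on the relevant range $\nu \le n_0 \le \lfloor (n-\kappa)/2 \rfloor$, so that it is maximized — hence the edge count is largest — at the endpoint $n_0 = \nu$, which is exactly where the threshold in \eqref{nu-0} is calibrated. Since $x^{1+1/t}$ is convex, the sum $(x+\kappa)^{1+1/t} + (y)^{1+1/t}$ with $x + y$ roughly fixed is pushed up by making the parts more unequal; as $n_0$ increases from $\nu$ the two parts $n_0 + \kappa$ and $n - n_0$ move toward each other (for $n_0$ below the balance point), so the sum decreases, giving the desired maximum at $n_0 = \nu$. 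I would also need to absorb the lower-order linear terms and the $\binom{\kappa}{2} \le \binom{k-1}{2}$ correction into the slack; replacing $\kappa$ by $k-1$ in $\nu+\kappa$ and noting $n_0 + \kappa \le n - n_0$ keeps everything aligned with the stated bound, and the linear terms $\frac12(n_0+\kappa) + \frac12(n-n_0) \le \frac12(n + k - 1)$ match the last summand in \eqref{nu-0} exactly. Combining, $m(G) < \text{RHS of \eqref{nu-0}}$, contradicting the hypothesis, so $\kappa(G) \geq k$ and $G$ is $k$-connected. \hspace*{\fill}$\Box$
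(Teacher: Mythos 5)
Your overall strategy is exactly the paper's: assume $\kappa\leq k-1$, split $G$ along a minimum cut $C$ into $G_{1}=G[V_{0}\cup C]$ and $G_{2}=G[C\cup U]$, bound each piece by Lemma \ref{le2.11}, and use Lemma \ref{le2.10} together with the monotonicity of $x\mapsto (x+\kappa)^{1+\frac{1}{t}}+(n-x)^{1+\frac{1}{t}}$ on $[\nu(\delta,g,\kappa),\,(n-\kappa)/2]$ (and then monotonicity in $\kappa$) to push the bound to the endpoint $|V_{0}|=\nu$, $\kappa=k-1$. That part of your argument, including the convexity justification of the monotonicity, is sound and matches the paper.

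The one genuine problem is the sign of the $E(G[C])$ correction. Since every edge of $G$ lies in $G_{1}$ or $G_{2}$ (there are no $V_{0}$--$U$ edges) and the edges inside $C$ are precisely the ones counted twice, the exact identity is $m(G)=m(G_{1})+m(G_{2})-|E(G[C])|$, hence $m(G)\leq m(G_{1})+m(G_{2})$ with no additive term at all; the double counting works in your favour. Your displayed bound instead \emph{adds} $\binom{\kappa}{2}$, and you then propose to ``absorb'' it into the slack --- but the threshold in \eqref{nu-0} is calibrated exactly to $\frac{1}{2}(\nu+k-1)^{1+\frac{1}{t}}+\frac{1}{2}(n-\nu)^{1+\frac{1}{t}}+\frac{1}{2}(n+k-1)$, with no room for an extra $\binom{k-1}{2}\geq 1$ when $k\geq 3$, so as written you cannot conclude that $m(G)$ is strictly below the right-hand side of \eqref{nu-0}. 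Dropping the spurious $+\binom{\kappa}{2}$ fixes this immediately, after which your proof coincides with the paper's. (A minor point in your favour: you explicitly note that Lemma \ref{le2.11} must be supplemented for acyclic or disconnected induced pieces, a detail the paper passes over.)
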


\begin{proof}
Assume that $\kappa\leq k-1$. Let $C$ be a minimum vertex-cut of $G$, then $|C|=\kappa\leq k-1<\delta.$ Let $V_{0}, V_{1}, \ldots, V_{t-1}$~$(t\geq2)$ be the vertex sets of connected components of $G-C$ with $|V_{0}|\leq|V_{1}|\leq \cdots \leq|V_{t-1}|,$
and let $U=\bigcup_{i=1}^{t-1}V_{i}$~(see Figure 1).
\setlength{\unitlength}{0.8pt}
\begin{center}
\begin{picture}(500,200)
\qbezier(212,90)(212,115)(221,133)\qbezier(221,133)(230,152)(244,152)
\qbezier(244,152)(257,152)(266,133)\qbezier(266,133)(276,115)(276,90)
\qbezier(276,90)(276,64)(266,46)\qbezier(266,46)(257,28)(244,28)
\qbezier(244,28)(230,28)(221,46)\qbezier(221,46)(212,64)(212,90)
\qbezier(330,89)(330,114)(339,132)\qbezier(339,132)(348,151)(362,151)
\qbezier(362,151)(375,151)(384,132)\qbezier(384,132)(394,114)(394,89)
\qbezier(394,89)(394,63)(384,45)\qbezier(384,45)(375,27)(362,27)
\qbezier(362,27)(348,27)(339,45)\qbezier(339,45)(330,63)(330,89)
\put(146,109){\circle*{4}}
\put(224,109){\circle*{4}}
\qbezier(146,109)(185,109)(224,109)
\put(147,88){\circle*{4}}
\put(225,88){\circle*{4}}
\qbezier(147,88)(186,88)(225,88)
\put(147,66){\circle*{4}}
\put(225,66){\circle*{4}}
\qbezier(147,66)(186,66)(225,66)
\put(265,116){\circle*{4}}
\put(343,116){\circle*{4}}
\qbezier(265,116)(304,116)(343,116)
\put(265,96){\circle*{4}}
\put(343,96){\circle*{4}}
\qbezier(265,96)(304,96)(343,96)
\put(265,77){\circle*{4}}
\put(343,77){\circle*{4}}
\qbezier(265,77)(304,77)(343,77)
\put(265,61){\circle*{4}}
\put(343,61){\circle*{4}}
\qbezier(265,61)(304,61)(343,61)
\put(124,80){$V_{0}$}
\put(241,80){$C$}
\put(362,80){$U$}
\qbezier(160,90)(160,115)(152,133)
\qbezier(152,133)(144,152)(133,152)\qbezier(133,152)(121,152)(113,133)
\qbezier(113,133)(106,115)(105,90)
\qbezier(105,90)(106,64)(113,46)\qbezier(113,46)(121,28)(133,28)
\qbezier(133,28)(144,28)(152,46)
\qbezier(152,46)(160,64)(160,90)
\end{picture}
\vskip 0.1cm Figure $1$. The partition of $V(G)$ into $V_{0}$, $C$ and $U.$
\end{center}
By Lemma \ref{le2.10}, for any $i$ with $0\leq i\leq t-1$, we have
$$|V_{i}|\geq \nu(\delta, g, \kappa).$$
In particular, $\nu(\delta, g, \kappa)\leq |V_{0}|\leq |U|\leq n-\kappa-\nu(\delta, g, \kappa)$ and $|V_{0}|+|U|=n-\kappa$.
In the following, we proceed our proof according to the different parities of the girth $g$.

\noindent {\bf Case 1.} $g=2t+1$ is odd.

By Lemma \ref{le2.11}, and since $\nu(\delta, g, \kappa)\leq|V_{0}|\leq \frac{n-\kappa}{2}\leq|U|,$ we have
\begin{eqnarray*}
  m(G) &=& |E(G[V_{0}\cup C])|+|E(G[C\cup U])|-|E(G[C])|\\
       &\leq& |E(G[V_{0}\cup C])|+|E(G[C\cup U])|\\
      &<& \frac{1}{2}(|V_{0}|+|C|)^{1+\frac{1}{t}}+\frac{1}{2}(|V_{0}|+|C|)+\frac{1}{2}(|C|+|U|)^{1+\frac{1}{t}}+\frac{1}{2}(|C|+|U|)\\
      &=& \frac{1}{2}(|V_{0}|+\kappa)^{1+\frac{1}{t}}+\frac{1}{2}(n-|V_{0}|)^{1+\frac{1}{t}}+\frac{1}{2}(n+\kappa)~~~(\text{\mbox{decreasing~on}~} |V_{0}|)\\
       &\leq& \frac{1}{2}(\nu(\delta, g, \kappa)+\kappa)^{1+\frac{1}{t}}+\frac{1}{2}(n-\nu(\delta, g, \kappa))^{1+\frac{1}{t}}+\frac{1}{2}(n+\kappa)~~~(\text{\mbox{increasing~on}~} \kappa)\\
       &\leq& \frac{1}{2}(\nu+k-1)^{1+\frac{1}{t}}+\frac{1}{2}(n-\nu)^{1+\frac{1}{t}}+\frac{1}{2}(n+k-1),
\end{eqnarray*}
contrary to (\ref{nu-0}) and so Case 1 is justified.

\noindent {\bf Case 2.} $g=2t+2$ is even.

By Lemma \ref{le2.11}, with a similar argument as in Case 1, we obtain
\begin{eqnarray*}
  m(G) &=& |E(G[V_{0}\cup C])|+|E(G[C\cup U])|-|E(G[C])|\\
      &<& \frac{1}{2^{1+\frac{1}{t}}}(|V_{0}|+|C|)^{1+\frac{1}{t}}+\frac{1}{2}(|V_{0}|+|C|)+\frac{1}{2^{1+\frac{1}{t}}}(|C|+|U|)^{1+\frac{1}{t}}+\frac{1}{2}(|C|+|U|)\\
      &=& \frac{1}{2^{1+\frac{1}{t}}}(|V_{0}|+\kappa)^{1+\frac{1}{t}}+\frac{1}{2^{1+\frac{1}{t}}}(n-|V_{0}|)^{1+\frac{1}{t}}+\frac{1}{2}(n+\kappa)\\
       &\leq& \frac{1}{2^{1+\frac{1}{t}}}(\nu(\delta, g, \kappa)+\kappa)^{1+\frac{1}{t}}+\frac{1}{2^{1+\frac{1}{t}}}(n-\nu(\delta, g, \kappa))^{1+\frac{1}{t}}+\frac{1}{2}(n+\kappa)\\
       &\leq& \frac{1}{2^{1+\frac{1}{t}}}(\nu+k-1)^{1+\frac{1}{t}}+\frac{1}{2^{1+\frac{1}{t}}}(n-\nu)^{1+\frac{1}{t}}+\frac{1}{2}(n+k-1),
\end{eqnarray*}
contrary to (\ref{nu-0}) and so Case 2 is justified. This completes the proof of the lemma. \hspace*{\fill}$\Box$
\end{proof}

\begin{theorem}\label{th3.2}
Let $G$ be a connected graph of order $n$, minimum degree $\delta \geq k \geq2$ and girth $g\geq3$,
and let $\nu=\nu(\delta, g, k-1)$.
If
\begin{equation*} \label{nu0}
q(G) \geq
\left\{
\begin{array}{ll}
\frac{(\nu+k-1)^{1+\frac{1}{t}}+(n-\nu)^{1+\frac{1}{t}}}{n-1}+\frac{k}{n-1}+(n-1) &\,~~~ \text{\mbox{if}~ $g=2t+1$},\\
\frac{(\nu+k-1)^{1+\frac{1}{t}}+(n-\nu)^{1+\frac{1}{t}}}{2^{\frac{1}{t}}(n-1)}+\frac{k}{n-1}+(n-1) &\, ~~~\text{\mbox{if}~ $g=2t+2$},
\end{array}
\right.
\end{equation*}
then $G$ is $k$-connected.
\end{theorem}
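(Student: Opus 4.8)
The plan is to convert the hypothesis on $q(G)$ into the edge-count hypothesis of Lemma \ref{le3.1} by means of the Feng--Yu bound (Lemma \ref{le2.5}), and then quote Lemma \ref{le3.1} directly. Since $G$ is connected, Lemma \ref{le2.5} gives $q(G)\le \frac{2m}{n-1}+n-2$, which rearranges to $m\ge \frac{n-1}{2}\bigl(q(G)-n+2\bigr)$. So it suffices to check that, under the stated lower bound on $q(G)$, the right-hand side of this last inequality is at least the threshold appearing in (\ref{nu-0}); Lemma \ref{le3.1} then forces $\kappa(G)\ge k$.

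First I would handle the odd case $g=2t+1$. The hypothesis rearranges to $q(G)-n+2\ge \frac{(\nu+k-1)^{1+1/t}+(n-\nu)^{1+1/t}+k}{n-1}+1$; multiplying through by $\frac{n-1}{2}$ and invoking the rearranged Lemma \ref{le2.5} yields $m\ge \frac{1}{2}(\nu+k-1)^{1+1/t}+\frac{1}{2}(n-\nu)^{1+1/t}+\frac{1}{2}(n+k-1)$, which is exactly the first branch of (\ref{nu-0}). For the even case $g=2t+2$ the same computation, now with the factor $\frac{1}{2^{1/t}(n-1)}$ in place of $\frac{1}{n-1}$ multiplying the power terms, gives $m\ge \frac{1}{2^{1+1/t}}(\nu+k-1)^{1+1/t}+\frac{1}{2^{1+1/t}}(n-\nu)^{1+1/t}+\frac{1}{2}(n+k-1)$, matching the second branch of (\ref{nu-0}). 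In both cases Lemma \ref{le3.1} applies.

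This argument is essentially a bookkeeping translation between an eigenvalue inequality and an edge inequality, so I do not expect a genuine obstacle; the only items worth a sentence are that the hypotheses of Lemma \ref{le3.1} ($G$ connected, $\delta\ge k\ge 2$, girth $g\ge 3$) are inherited unchanged from the statement of the theorem, and that the equality case of Lemma \ref{le2.5} is harmless here since we use only the inequality (and in any event $K_{1,n-1}$ has $\delta=1<k$ while $K_n$ is $(n-1)$-connected, hence $k$-connected). One may also note in passing that the prescribed value of $q(G)$ exceeds $n-2$, so the derived bound on $m$ is genuinely positive and the deduction is not vacuous.
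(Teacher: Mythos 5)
Your proposal is correct and follows essentially the same route as the paper: apply the Feng--Yu bound (Lemma \ref{le2.5}) to convert the hypothesis on $q(G)$ into the edge-count threshold (\ref{nu-0}), then invoke Lemma \ref{le3.1}. The arithmetic in both parity cases checks out, and your side remarks about the equality case of Lemma \ref{le2.5} are harmless extras the paper does not bother with.
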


\begin{proof}
By Lemma \ref{le2.5}, we have
\begin{equation*} \label{nu0}
\frac{2m}{n-1}+n-2 \geq q(G) \geq
\left\{
\begin{array}{ll}
\frac{(\nu+k-1)^{1+\frac{1}{t}}+(n-\nu)^{1+\frac{1}{t}}}{n-1}+\frac{k}{n-1}+(n-1) &\,~~~ \text{\mbox{if}~ $g=2t+1$},\\
\frac{(\nu+k-1)^{1+\frac{1}{t}}+(n-\nu)^{1+\frac{1}{t}}}{2^{\frac{1}{t}}(n-1)}+\frac{k}{n-1}+(n-1) &\, ~~~\text{\mbox{if}~ $g=2t+2$}.
\end{array}
\right.
\end{equation*}
Then
\begin{equation*} \label{nu0}
m \geq
\left\{
\begin{array}{ll}
\frac{1}{2}(\nu+k-1)^{1+\frac{1}{t}}+\frac{1}{2}(n-\nu)^{1+\frac{1}{t}}+\frac{1}{2}(n+k-1) &\,~~~ \text{\mbox{if}~ $g=2t+1$},\\
\frac{1}{2^{1+\frac{1}{t}}}(\nu+k-1)^{1+\frac{1}{t}}+\frac{1}{2^{1+\frac{1}{t}}}(n-\nu)^{1+\frac{1}{t}}+\frac{1}{2}(n+k-1) &\, ~~~\text{\mbox{if}~ $g=2t+2$}.
\end{array}
\right.
\end{equation*}
By Lemma \ref{le3.1}, $G$ is $k$-connected.
\hspace*{\fill}$\Box$
\end{proof}

\begin{theorem}\label{th3.3}
Let $G$ be a connected graph of order $n$, minimum degree $\delta \geq k \geq2$ and girth $g\geq3$,
and let $\nu=\nu(\delta, g, k-1)$.
If
\begin{equation} \label{nu-1}
q(\overline{G})\leq
\left\{
\begin{array}{ll}
2(n-1)-\frac{2}{n}(\nu+k-1)^{1+\frac{1}{t}}-\frac{2}{n}(n-\nu)^{1+\frac{1}{t}}-\frac{2}{n}(n+k-1)&\,~~~ \text{\mbox{if}~ $g=2t+1$},\\
2(n-1)-\frac{2}{n\cdot2^{\frac{1}{t}}}(\nu+k-1)^{1+\frac{1}{t}}-\frac{2}{n\cdot2^{\frac{1}{t}}}(n-\nu)^{1+\frac{1}{t}}-\frac{2}{n}(n+k-1) &\, ~~~\text{\mbox{if}~ $g=2t+2$},
\end{array}
\right.
\end{equation}
then $G$ is $k$-connected.
\end{theorem}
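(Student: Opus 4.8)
The plan is to run the argument of Theorem~\ref{th3.2} with $\overline{G}$ in place of $G$: instead of bounding $m(G)$ from below by combining the hypothesis with the \emph{upper} bound of Lemma~\ref{le2.5}, I will bound $m(G)$ from below by combining the hypothesis with the \emph{lower} bound of Lemma~\ref{le2.1} applied to the complement. Write $\overline{m}=|E(\overline{G})|=\frac{n(n-1)}{2}-m$. Since $\overline{G}$ is a graph on $n$ vertices with $\overline{m}$ edges, Lemma~\ref{le2.1} gives $q(\overline{G})\ge \frac{4\overline{m}}{n}$, and this needs no connectivity hypothesis on $\overline{G}$. Denoting by $B$ the right-hand side of (\ref{nu-1}) in the relevant parity case, the assumption $q(\overline{G})\le B$ yields $\frac{4}{n}\bigl(\frac{n(n-1)}{2}-m\bigr)\le B$, hence $m\ge \frac{n(n-1)}{2}-\frac{n}{4}B$.

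The second step is a routine identity: I must check that $\frac{n(n-1)}{2}-\frac{n}{4}B$ is exactly the right-hand side of (\ref{nu-0}) in Lemma~\ref{le3.1}. When $g=2t+1$, multiplying $B$ by $\frac{n}{4}$ sends the leading term $2(n-1)$ to $\frac{n(n-1)}{2}$ and the three correction terms $-\frac2n(\nu+k-1)^{1+\frac1t}$, $-\frac2n(n-\nu)^{1+\frac1t}$, $-\frac2n(n+k-1)$ to $-\frac12(\nu+k-1)^{1+\frac1t}$, $-\frac12(n-\nu)^{1+\frac1t}$, $-\frac12(n+k-1)$; subtracting from $\frac{n(n-1)}{2}$ leaves precisely $\frac12(\nu+k-1)^{1+\frac1t}+\frac12(n-\nu)^{1+\frac1t}+\frac12(n+k-1)$, i.e.\ the bound in (\ref{nu-0}) for odd girth. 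The even case $g=2t+2$ is identical, except the first two correction terms in $B$ carry the extra factor $2^{-\frac1t}$, which after multiplication by $\frac{n}{4}$ produces exactly the coefficient $\frac{1}{2^{1+\frac1t}}$ appearing in (\ref{nu-0}).

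Having established $m(G)\ge$ (the right-hand side of (\ref{nu-0})), Lemma~\ref{le3.1} immediately gives that $G$ is $k$-connected, which finishes the proof. I do not anticipate a genuine obstacle: all the structural content is already packaged in Lemma~\ref{le2.10} (through Lemma~\ref{le3.1}) and in the elementary edge bound of Lemma~\ref{le2.1}. The only thing requiring care is the bookkeeping of the constants — keeping straight the factor $\frac12$ versus $\frac{1}{2^{1+\frac1t}}$ and the $2^{\frac1t}$ in the denominators of (\ref{nu-1}) — so that the arithmetic in the second step closes \emph{exactly} onto the hypothesis of Lemma~\ref{le3.1} with no slack; this is precisely what makes the bound on $q(\overline{G})$ the sharp companion of the bound on $q(G)$ in Theorem~\ref{th3.2}.
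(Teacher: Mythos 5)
Your proposal is correct and is essentially the paper's own argument read in the direct rather than the contrapositive direction: the paper assumes $\kappa(G)\le k-1$, deduces $m(G)$ is below the bound of Lemma~\ref{le3.1}, and then uses $m(G)+m(\overline{G})=\frac{n(n-1)}{2}$ together with Lemma~\ref{le2.1} to contradict (\ref{nu-1}), while you run the identical chain of inequalities forward. The arithmetic you describe (including the $2^{-\frac{1}{t}}$ bookkeeping in the even-girth case) checks out exactly, so the two proofs are the same in substance.
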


\begin{proof}
By contradiction, we assume that $\kappa(G) \leq k-1$.
We argue according to the different parities of the girth $g$.

\noindent {\bf Case 1.} $g=2t+1$ is odd.

As $\kappa(G) \leq k-1$, by Lemma \ref{le3.1},
we have  $m(G)<\frac{1}{2}(\nu+k-1)^{1+\frac{1}{t}}+\frac{1}{2}(n-\nu)^{1+\frac{1}{t}}+\frac{1}{2}(n+k-1)$.
It follows from $m(G)+m(\overline{G})=\frac{n(n-1)}{2}$ that
\begin{eqnarray*}
m(\overline{G})&=& \frac{n(n-1)}{2}-m(G)\\
              &>& \frac{n(n-1)}{2}-\frac{1}{2}(\nu+k-1)^{1+\frac{1}{t}}-\frac{1}{2}(n-\nu)^{1+\frac{1}{t}}-\frac{1}{2}(n+k-1).
\end{eqnarray*}
By Lemma \ref{le2.1}, a contradiction to (\ref{nu-1}) is obtained.
\begin{eqnarray*}
q(\overline{G})&\geq& \frac{4m(\overline{G})}{n}\\
              &>& \frac{4}{n}[\frac{n(n-1)}{2}-\frac{1}{2}(\nu+k-1)^{1+\frac{1}{t}}-\frac{1}{2}(n-\nu)^{1+\frac{1}{t}}-\frac{1}{2}(n+k-1)]\\
                &=& 2(n-1)-\frac{2}{n}(\nu+k-1)^{1+\frac{1}{t}}-\frac{2}{n}(n-\nu)^{1+\frac{1}{t}}-\frac{2}{n}(n+k-1).
\end{eqnarray*}

\noindent {\bf Case 2.} $g=2t+2$ is even.

As $\kappa(G) \leq k-1$, by Lemma \ref{le3.1},
we have   $m(G)<\frac{1}{2^{1+\frac{1}{t}}}(\nu+k-1)^{1+\frac{1}{t}}+\frac{1}{2^{1+\frac{1}{t}}}(n-\nu)^{1+\frac{1}{t}}+\frac{1}{2}(n+k-1)$.
Since $m(G)+m(\overline{G})=\frac{n(n-1)}{2},$ we have
\begin{eqnarray*}
m(\overline{G})&=& \frac{n(n-1)}{2}-m(G)\\
              &>& \frac{n(n-1)}{2}-\frac{1}{2^{1+\frac{1}{t}}}(\nu+k-1)^{1+\frac{1}{t}}-\frac{1}{2^{1+\frac{1}{t}}}(n-\nu)^{1+\frac{1}{t}}-\frac{1}{2}(n+k-1).
\end{eqnarray*}
By Lemma \ref{le2.1}, we obtain a contradiction to (\ref{nu-1}) again.
\begin{eqnarray*}
q(\overline{G})&\geq& \frac{4m(\overline{G})}{n}\\
              &>& \frac{4}{n}[\frac{n(n-1)}{2}-\frac{1}{2^{1+\frac{1}{t}}}(\nu+k-1)^{1+\frac{1}{t}}-\frac{1}{2^{1+\frac{1}{t}}}(n-\nu)^{1+\frac{1}{t}}-\frac{1}{2}(n+k-1)]\\
                &=& 2(n-1)-\frac{2}{n\cdot2^{\frac{1}{t}}}(\nu+k-1)^{1+\frac{1}{t}}-\frac{2}{n\cdot2^{\frac{1}{t}}}(n-\nu)^{1+\frac{1}{t}}-\frac{2}{n}(n+k-1).
\end{eqnarray*}
These contradictions establish Theorem \ref{th3.3}.
\end{proof}
\hspace*{\fill}$\Box$

\begin{remark}
In fact, by taking $k=\delta$ in Lemma \ref{le3.1} and $\kappa=\delta$ in the proof of Lemma \ref{le3.1}, we can prove sufficient conditions on size $m$
for a connected graph with fixed girth to be maximally connected and super-connected, respectively. Using sufficient conditions on size $m$,
we can also obtain sufficient conditions on $q(G)$ and $q(\overline{G})$ to ensure a connected graph with fixed
girth to be maximally connected and super-connected, respectively. In view of complex mathematical expressions, we omit these results here.
However, for two special cases: connected graphs ($g\geq3$) and connected triangle-free graphs ($g\geq4$), we will provide improved and specific theorems in subsequent sections.
\end{remark}

\section{ Vertex-connectivity and $Q$-index of connected graphs ($g\geq3$)}

Throughout this section, we assume that $k$ and $\delta$ are positive integers.
The goal of this section is to investigate the relationship between
the connectivity and the $Q$-index of a graph.

\subsection{$k$-connected graphs ($g\geq3$)}

We will present a
lower bound on $q(G)$ for a connected graph to be $k$-connected.
Define $q_{0} = q(K_{k-1}\vee(K_{\delta-k+2}\cup K_{n-\delta-1}))$.
Direct computation yields that $q_0$ is the largest root of the equation

$
\lambda^{3}-(3n+k-7)\lambda^{2}+(2n^{2}-15n+3nk-4k+16+4(\delta-k+2)(n-\delta-1))\lambda-4(\delta-k+2)(n-\delta-1)(n-2)-
(n-k+1)(2n-k+1)(k-3)-(3n-2k+2)(k-3)^{2}-(k-3)^{3}=0.
$

\begin{theorem}\label{th4.1}
Let $G$ be a connected graph of order $n$ and minimum degree $\delta\geq k\geq2$.
Suppose that $q(G)\geq q_0$.
Then $G$ is $k$-connected if and only if $G \not \cong K_{k-1}\vee(K_{\delta-k+2}\cup K_{n-\delta-1})$.
\end{theorem}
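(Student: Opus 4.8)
The plan is to prove the two implications separately; essentially all the work is in the ``if'' direction. The ``only if'' implication is immediate: in $H:=K_{k-1}\vee(K_{\delta-k+2}\cup K_{n-\delta-1})$ the $k-1$ vertices of the $K_{k-1}$ form a vertex-cut, so $\kappa(H)\le k-1$ and $H$ is not $k$-connected; also $\delta(H)=\delta$ in the range $\delta-k+2\le n-\delta-1$ in which $H$ is a genuine exception, so $H$ satisfies the hypotheses. Hence if $G$ is $k$-connected then $G\not\cong H$. For the ``if'' implication I would assume $q(G)\ge q_0$ and, towards a contradiction, that $G$ is not $k$-connected, and deduce $G\cong H$. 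Since $G$ is connected and non-complete, $\kappa:=\kappa(G)$ satisfies $1\le\kappa\le k-1<\delta$.

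First I would set up the standard structure. Let $C$ be a minimum vertex-cut, $|C|=\kappa$; as $\kappa<\delta$, $G-C$ has at least two components. Let $V_0$ be a smallest component of $G-C$, put $U=V(G)\setminus(C\cup V_0)$, and write $a=|V_0|\le b=|U|$, so $a+b=n-\kappa$. A degree count---each vertex of a component $W$ of $G-C$ has degree between $\delta$ and $|W|-1+\kappa$---gives $|W|\ge\delta-\kappa+1$, hence $\delta-\kappa+1\le a\le\tfrac{n-\kappa}{2}\le b\le n-\delta-1$. Since $G$ has no $V_0$--$U$ edge, $G$ is a spanning subgraph of the connected graph $G^{\ast}:=K_{\kappa}\vee(K_{a}\cup K_{b})$, so $q(G)\le q(G^{\ast})$, with equality if and only if $G=G^{\ast}$.

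Next I would evaluate $q(G^{\ast})$ exactly. The partition $(C,V(K_a),V(K_b))$ is equitable for $Q(G^{\ast})$, with quotient matrix
$$B=\begin{pmatrix} n+\kappa-2 & a & b\\ \kappa & 2a+\kappa-2 & 0\\ \kappa & 0 & 2b+\kappa-2 \end{pmatrix}.$$
The eigenvalues of $Q(G^{\ast})$ not afforded by $B$ are $n-2$ (from vectors supported on $C$ with zero sum) and $a+\kappa-2$, $b+\kappa-2$ (from vectors supported on a single clique with zero sum), and since $k\le\delta$ these are all at most $n-2$; as $q(G^{\ast})>n-2$ by Lemma~\ref{le2.2}, $q(G^{\ast})$ is the largest root of $\det(\lambda I-B)=0$. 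With $s=\lambda-\kappa+2$ the characteristic polynomial of $B$ factors as $(s-n+\kappa)(s^{2}-(2n-\kappa)s+4ab)=0$---this is precisely the displayed cubic of the theorem when $(\kappa,a,b)=(k-1,\delta-k+2,n-\delta-1)$---so
$$q(G^{\ast})=\phi(\kappa,a,b):=\frac{2n+\kappa-4+\sqrt{(2n-\kappa)^{2}-16ab}}{2},$$
which is strictly decreasing in $ab$.

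It then remains to maximize $\phi$ over $1\le\kappa\le k-1$ and $\delta-\kappa+1\le a\le b$, $a+b=n-\kappa$. For fixed $\kappa$, $ab=a(n-\kappa-a)$ is increasing on $[\delta-\kappa+1,\tfrac{n-\kappa}{2}]$, so $\phi$ is maximized at $a=\delta-\kappa+1$, $b=n-\delta-1$, with equality iff $\{a,b\}=\{\delta-\kappa+1,n-\delta-1\}$. For the dependence on $\kappa$, moving one vertex of the $(\delta-\kappa+1)$-clique into the cut shows $K_{\kappa}\vee(K_{\delta-\kappa+1}\cup K_{n-\delta-1})$ to be a proper spanning subgraph of the connected graph $K_{\kappa+1}\vee(K_{\delta-\kappa}\cup K_{n-\delta-1})$, so $\phi(\kappa,\delta-\kappa+1,n-\delta-1)$ is strictly increasing in $\kappa$ and is maximized over $1\le\kappa\le k-1$ at $\kappa=k-1$, where its value is $q_0$. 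Chaining the inequalities, $q_0\le q(G)\le q(G^{\ast})=\phi(\kappa,a,b)\le q_0$, so every inequality is an equality; then $q(G)=q(G^{\ast})$ forces $G=G^{\ast}$, and the equality cases above force $\kappa=k-1$ and $\{a,b\}=\{\delta-k+2,n-\delta-1\}$, that is $G\cong H$, the desired contradiction. The step I expect to be the real obstacle is the exact evaluation and comparison of $q(K_{\kappa}\vee(K_a\cup K_b))$---the factorization of $\det(\lambda I-B)$ and then the $\kappa$-monotonicity (for which the spanning-subgraph trick is the cleanest route; an alternative is a somewhat fussy derivative estimate that splits on the sign of $2n-\kappa-8(n-\delta-1)$). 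The spanning-subgraph reduction, the component-size bound, and the ``decreasing in $ab$'' step are routine.
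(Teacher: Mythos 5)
Your proposal is correct and follows essentially the same route as the paper: reduce to the spanning supergraph $K_{\kappa}\vee(K_{a}\cup K_{b})$ via a minimum vertex-cut, bound the component sizes by $\delta-\kappa+1$, compare the $Q$-indices over $a,b$ using the fact that they exceed $n-2$ (Lemma \ref{le2.2}), handle the dependence on $\kappa$ by the subgraph relation, and force equality throughout. The only difference is cosmetic: where you factor the quotient characteristic polynomial and exhibit the closed form $\phi(\kappa,a,b)$ decreasing in $ab$, the paper compares the two cubics directly via the identity $f(\lambda;\kappa,a,b)-f(\lambda;\kappa,\delta-\kappa+1,n-\delta-1)=4(\lambda-n+2)\bigl[ab-(\delta-\kappa+1)(n-\delta-1)\bigr]$ evaluated at $\lambda=q>n-2$.
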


\begin{proof} By definition, $K_{k-1}\vee(K_{\delta-k+2}\cup K_{n-\delta-1})$ has a $(k-1)$ vertex-cut and so
$\kappa(K_{k-1}\vee(K_{\delta-k+2}\cup K_{n-\delta-1}))= k-1$. Therefore, it suffices to prove
the sufficiency.

By contradiction, we assume that  $G \not \cong K_{k-1}\vee(K_{\delta-k+2}\cup K_{n-\delta-1})$
and $\kappa\leq k-1$. Let $C$ be a minimum vertex-cut of $G$, then $|C|=\kappa\leq k-1<\delta$.
Let $V_{0}, V_{1}, \ldots, V_{t-1}$~$(t\geq2)$ be the vertex sets of connected components of
$G-C$ with $|V_{0}|\leq|V_{1}|\leq \cdots \leq|V_{t-1}|$.

By Lemma \ref{le2.10} with $g\geq3$, for each $i$ with
$0\leq i\leq t-1$, we have
$$
|V_{i}|\geq \nu(\delta, g, \kappa)\geq\nu(\delta, 3, \kappa)=\delta-\kappa+1.
$$
Let $U=\bigcup_{i=1}^{t-1}V_{i}$. Then $\delta-\kappa+1\leq |V_{0}|\leq |U|\leq n-\delta-1$ and $|V_{0}|+|U|=n-\kappa$.
As $E(V_{0}, U)=\emptyset$, $G$ can be viewed as a subgraph of $K_{\kappa}\vee(K_{|V_{0}|}\cup K_{|U|})$,
and so
$$
q(G)\leq q(K_{\kappa}\vee(K_{|V_{0}|}\cup K_{|U|})).
$$

Let $G(\kappa,a,b)=K_{\kappa}\vee(K_{a}\cup K_{b})$, where $\delta-\kappa+1\leq a\leq b\leq n-\delta-1$ and $\kappa+a+b=n$.
Let $X=(x_{1}, x_{2}, \ldots, x_{n})^{T}$ be the Perron vector of $G$ corresponding to $q(G(\kappa,a,b))$.
Without loss of generality, let $x:=x_{i}, i\in K_{a}$; $y:=x_{j}, j\in K_{\kappa}$; $z:=x_{l}, l\in K_{b}$.
As $\lambda X=(D+A)X$, we have
\begin{equation*} \label{nu0}
\left\{
\begin{array}{ll}
\lambda x=(a-1+\kappa)x+(a-1)x+\kappa y,\\
\lambda y=ax+(n-1)y+(\kappa-1)y+b z,\\
\lambda z=\kappa y+(b-1+\kappa)z+(b-1)z.
\end{array}
\right.
\end{equation*}
It follows that  $q(K_{\kappa}\vee(K_{a}\cup K_{b}))$ is the largest root of the equation

$\lambda^{3}-(3n+\kappa-6)\lambda^{2}+(2n^{2}-12n+3n\kappa-4\kappa+12+4ab)\lambda-4ab(n-2)-
(n-\kappa)(2n-\kappa)(\kappa-2)-(3n-2\kappa)(\kappa-2)^{2}-(\kappa-2)^{3}=0$.

By algebraic manipulation, for $\lambda \geq n-2$, we have
\begin{equation} \label{1}
f(\lambda; \kappa, a, b)-f(\lambda; \kappa, \delta-\kappa+1, n-\delta-1)=4(\lambda-n+2)[ab-(\delta-\kappa+1)(n-\delta-1)]\geq0.
\end{equation}
By Lemma \ref{le2.2}, $q(G(\kappa,a,b))>n-2.$ Substituting $\lambda$ with $q(G(\kappa,a,b))$ in (\ref{1}), we have
$f(q(G(\kappa, a, b)); \kappa, \delta-\kappa+1, n-\delta-1)\leq0,$ and so
$$
q(G(\kappa, a, b))\leq q(G(\kappa, \delta-\kappa+1, n-\delta-1)).
$$
Therefore,
\begin{equation} \label{1a}
q(G)\leq q(K_{\kappa}\vee(K_{|V_{0}|}\cup K_{|U|}))\leq q(K_{\kappa}\vee(K_{\delta-\kappa+1}\cup K_{n-\delta-1})).
\end{equation}
Since $\kappa\leq k-1,$ we conclude that  $K_{\kappa}\vee(K_{\delta-\kappa+1}\cup K_{n-\delta-1})$ is a subgraph
of $K_{k-1}\vee(K_{\delta-k+2}\cup K_{n-\delta-1})$,  and
\begin{equation} \label{1b}
q(G)\leq q(K_{\kappa}\vee(K_{\delta-\kappa+1}\cup K_{n-\delta-1}))\leq q(K_{k-1}\vee(K_{\delta-k+2}\cup K_{n-\delta-1})).
\end{equation}
By the hypothesis of Theorem \ref{th4.1}, $q(G)\geq q(K_{k-1}\vee(K_{\delta-k+2}\cup K_{n-\delta-1}))$, and so we must have
$$
q(G)=q(K_{k-1}\vee(K_{\delta-k+2}\cup K_{n-\delta-1})).
$$
It follows that all the inequalities in (\ref{1a}) and (\ref{1b}) must be equalities.
Hence we must have $|V_{0}|=\delta-k+2$, $|U|=n-\delta-1$ and $\kappa=k-1.$ Therefore $G\cong K_{k-1}\vee(K_{\delta-k+2}\cup K_{n-\delta-1})$,
contrary to our assumption. This completes the proof of the theorem.
\hspace*{\fill}$\Box$
\end{proof}

Hong et al. obtained a sufficient condition on size $m$ for $k$-connected graphs, in
which the lower bound of the size is the special case when $g\geq3$ of Lemma \ref{le3.1}.

\begin{theorem} (Hong, Xia, Chen and Volkmann \cite{HXC})\label{le4.2}
Let $k\geq2$ be an integer. Let $G$ be a connected graph of order $n$, size $m$, and minimum degree $\delta\geq k.$ If $$m\geq\frac{1}{2}n(n-1)-(\delta-k+2)(n-\delta-1),$$ then
$G$ is $k$-connected unless $G\cong K_{k-1}\vee(K_{\delta-k+2}\cup K_{n-\delta-1})$.
\end{theorem}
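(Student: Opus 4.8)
I would run the connectivity--cut argument behind Lemma~\ref{le3.1} and Theorem~\ref{th4.1} in its simplest ($g=3$) form, but replace the F\"{u}redi--Simonovits estimate of Lemma~\ref{le2.11}, which is not sharp enough here to produce a clean extremal graph, by the trivial fact that $G$ is a spanning subgraph of $K_n$. This turns the size estimate into an exact edge count and makes the extremal graph $K_{k-1}\vee(K_{\delta-k+2}\cup K_{n-\delta-1})$ visible without any appeal to eigenvalues.

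\textbf{Setup by contradiction.} Suppose $m$ satisfies the stated inequality but $G$ is neither $k$-connected nor isomorphic to $K_{k-1}\vee(K_{\delta-k+2}\cup K_{n-\delta-1})$. Then $\kappa:=\kappa(G)\le k-1<\delta$, so $\kappa\le n-2$ and $G$ is non-complete; pick a minimum vertex-cut $C$ with $|C|=\kappa$, and let $V_0,V_1,\dots,V_{t-1}$ with $|V_0|\le\cdots\le|V_{t-1}|$ be the vertex sets of the components of $G-C$, so $t\ge2$. Put $U=\bigcup_{i=1}^{t-1}V_i$. As in Theorem~\ref{th4.1}, Lemma~\ref{le2.10} with $g\ge3$ (equivalently: a vertex of $V_i$ has all its neighbours inside $V_i\cup C$, so $|V_i|-1+\kappa\ge\delta$) yields $|V_i|\ge\nu(\delta,3,\kappa)=\delta-\kappa+1$ for all $i$, hence
\[
\delta-\kappa+1\le|V_0|\le|U|\le n-\delta-1,\qquad |V_0|+|U|=n-\kappa .
\]
In particular $n-\kappa=|V_0|+|U|\ge2(\delta-\kappa+1)$, so $\delta-\kappa+1\le\frac{n-\kappa}{2}$, a fact used below.

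\textbf{Edge count and optimisation.} Since $E(V_0,U)=\emptyset$, the graph $G$ is a spanning subgraph of $K_\kappa\vee(K_{|V_0|}\cup K_{|U|})$, whose only missing edges are the $|V_0|\,|U|$ pairs between $V_0$ and $U$; thus $m\le\frac{n(n-1)}{2}-|V_0|\,|U|$. I then minimise $|V_0|\,|U|$ subject to $|V_0|+|U|=n-\kappa$ and $\delta-\kappa+1\le|V_0|\le\frac{n-\kappa}{2}$: the function $x\mapsto x(n-\kappa-x)$ is increasing on $[0,\frac{n-\kappa}{2}]$, so its minimum over the feasible interval is at $|V_0|=\delta-\kappa+1$, giving $|V_0|\,|U|\ge(\delta-\kappa+1)(n-\delta-1)$. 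Since $\kappa\le k-1$ we have $\delta-\kappa+1\ge\delta-k+2\ge2$ and $n-\delta-1\ge|U|\ge\delta-\kappa+1>0$, hence $(\delta-\kappa+1)(n-\delta-1)\ge(\delta-k+2)(n-\delta-1)$. Therefore
\[
m\le\frac{n(n-1)}{2}-(\delta-k+2)(n-\delta-1),
\]
which, combined with the hypothesis on $m$, forces equality throughout the preceding chain.

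\textbf{Equality analysis.} Equality in $m\le\frac{n(n-1)}{2}-|V_0|\,|U|$ forces $t=2$ (a third component would contribute further non-edges) and $G=K_\kappa\vee(K_{|V_0|}\cup K_{|U|})$; equality in the product bound forces $|V_0|=\delta-\kappa+1$, hence $|U|=n-\delta-1$; and equality in $(\delta-\kappa+1)(n-\delta-1)\ge(\delta-k+2)(n-\delta-1)$, together with $n-\delta-1>0$, forces $\kappa=k-1$. Thus $G\cong K_{k-1}\vee(K_{\delta-k+2}\cup K_{n-\delta-1})$, contradicting our assumption, which proves the theorem. The step requiring the most care is this equality discussion, and in particular the legitimacy of the product minimisation over the correct interval; but, as observed above, $\delta-\kappa+1\le\frac{n-\kappa}{2}$ follows automatically from the two lower bounds $|V_0|,|U|\ge\delta-\kappa+1$, so nothing here is genuinely delicate.
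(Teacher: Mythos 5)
Your argument is correct. Note first that the paper does not actually prove Theorem \ref{le4.2}; it imports it from Hong, Xia, Chen and Volkmann \cite{HXC}, so there is no in-paper proof to match against. What you have written is a valid, self-contained proof: the lower bound $|V_i|\ge\delta-\kappa+1$ is exactly the $g\ge3$ instance of Lemma \ref{le2.10} (and your one-line degree argument for it is sound), the count $m\le\frac{n(n-1)}{2}-|V_0|\,|U|$ is valid for any number $t\ge 2$ of components since you only discard the $V_0$--$U$ non-edges, the monotonicity of $x\mapsto x(n-\kappa-x)$ on $[0,\frac{n-\kappa}{2}]$ together with $\delta-\kappa+1\le\frac{n-\kappa}{2}$ gives the correct minimisation, and the equality analysis (forcing $t=2$, completeness of everything except the $V_0$--$U$ pairs, $|V_0|=\delta-\kappa+1$, and $\kappa=k-1$ via $n-\delta-1>0$) pins down the unique extremal graph. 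Structurally, your proof is the elementary edge-counting analogue of the paper's proof of Theorem \ref{th4.1}: both use the same cut decomposition and the same observation that $G$ sits inside $K_{\kappa}\vee(K_{|V_0|}\cup K_{|U|})$, but where Theorem \ref{th4.1} must compare $Q$-indices of these join graphs via their characteristic polynomials and Lemma \ref{le2.2}, you only need to count missing edges, which is why your extremal case falls out so cleanly. Your remark that the F\"{u}redi--Simonovits bound of Lemma \ref{le2.11} is the wrong tool here is also apt; that bound is what forces the strict (non-extremal) inequalities in Lemma \ref{le3.1}, whereas the $g=3$ case admits an exact count.
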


Theorem \ref{le4.2} can be applied to show an explicit lower bound of $q(G)$ to predict $k$-connected graphs.

\begin{corollary}\label{th4.3}
Let $G$ be a connected graph with $n = |V(G)|$, $m = |E(G)|$ and $\delta = \delta(G) \geq k \ge 2$. Suppose that
$$
q(G)\geq 2(n-\delta+k-3)+\frac{2\delta(\delta-k+2)}{n-1}.
$$
Then $G$ is $k$-connected.
\end{corollary}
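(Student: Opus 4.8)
The plan is to convert the spectral hypothesis into precisely the size hypothesis of Theorem~\ref{le4.2} by means of the Feng--Yu bound, and then to eliminate the single extremal graph appearing there. First, since $G$ is connected, Lemma~\ref{le2.5} gives $q(G)\le \frac{2m}{n-1}+n-2$, so the assumption forces
\[
\frac{2m}{n-1}+n-2\ \ge\ 2(n-\delta+k-3)+\frac{2\delta(\delta-k+2)}{n-1}.
\]
Multiplying through by $n-1$ and collecting terms, I expect this to reduce, after a short computation whose core is the identity
\[
2(n-1)(n-\delta+k-3)+2\delta(\delta-k+2)-(n-1)(n-2)=n(n-1)-2(\delta-k+2)(n-\delta-1),
\]
to the bound $m\ge \tfrac12 n(n-1)-(\delta-k+2)(n-\delta-1)$. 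This is exactly the hypothesis of Theorem~\ref{le4.2}, so that theorem yields that $G$ is $k$-connected unless $G\cong H:=K_{k-1}\vee(K_{\delta-k+2}\cup K_{n-\delta-1})$.

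It then remains to rule out $G\cong H$. To do this I would compute $m(H)$ through the complement: one has $\overline{H}=\overline{K_{k-1}}\cup K_{\delta-k+2,\,n-\delta-1}$, so $m(H)=\tfrac12 n(n-1)-(\delta-k+2)(n-\delta-1)$, i.e.\ $H$ attains equality in the size estimate above. Assuming the nondegenerate range $n\ge\delta+2$, the graph $H$ is connected (as $k-1\ge1$) and is neither $K_n$ nor $K_{1,n-1}$ (since $\delta-k+2\ge2$), so the equality case of Lemma~\ref{le2.5} does not occur and the inequality is strict:
\[
q(H)<\frac{2m(H)}{n-1}+n-2=2(n-\delta+k-3)+\frac{2\delta(\delta-k+2)}{n-1},
\]
the last equality being the identity from the first step. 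This contradicts the hypothesis $q(G)\ge 2(n-\delta+k-3)+\frac{2\delta(\delta-k+2)}{n-1}$, so $G\not\cong H$ and hence $G$ is $k$-connected. In the degenerate case $n=\delta+1$ one has $\delta(G)=n-1$, forcing $G=K_n$, which is $(n-1)$-connected and hence $k$-connected since $k\le\delta=n-1$.

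The argument has no real obstacle: its whole content is the observation that the stated $q$-threshold is exactly the Feng--Yu value $\frac{2m^{\ast}}{n-1}+n-2$ attached to the critical size $m^{\ast}=\tfrac12 n(n-1)-(\delta-k+2)(n-\delta-1)$, which is simultaneously the size bound of Theorem~\ref{le4.2} and the size of its extremal graph. The only points requiring care are the bookkeeping in the algebraic identity, checking that $H$ is neither $K_n$ nor $K_{1,n-1}$ so that Lemma~\ref{le2.5} is strict for $H$, and separating off the trivial range $\delta=n-1$.
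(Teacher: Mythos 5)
Your proof is correct and follows essentially the same route as the paper: apply the Feng--Yu bound (Lemma \ref{le2.5}) to convert the spectral hypothesis into the size hypothesis of Theorem \ref{le4.2}, and then exclude the extremal graph $K_{k-1}\vee(K_{\delta-k+2}\cup K_{n-\delta-1})$ by observing that it attains the critical size yet is neither $K_n$ nor $K_{1,n-1}$, so the equality case of Lemma \ref{le2.5} cannot hold. Your write-up is in fact slightly more careful than the paper's, since you verify the algebraic identity explicitly and separate off the degenerate range $n=\delta+1$.
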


\begin{proof}
Suppose that $G$ is not $k$-connected. By assumption and Lemma \ref{le2.5}, we have
\begin{equation}\label{3n}
2(n-\delta+k-3)+\frac{2\delta(\delta-k+2)}{n-1}\leq q(G)\leq \frac{2m}{n-1}+n-2.
\end{equation}
Then $m\geq\frac{1}{2}n(n-1)-(\delta-k+2)(n-\delta-1).$
By Theorem \ref{le4.2}, $G\cong K_{k-1}\vee(K_{\delta-k+2}\cup K_{n-\delta-1}).$
Since $$|E(G)|=\frac{1}{2}n(n-1)-(\delta-k+2)(n-\delta-1),$$
the inequalities in (\ref{3n}) must be equalities. By Lemma \ref{le2.5}, $G\cong K_{n}$ or $K_{1, n-1}.$
As $K_{k-1}\vee(K_{\delta-k+2}\cup K_{n-\delta-1})$ is isomorphic to neither  $K_{n}$ nor $K_{1, n-1}$, a contradiction
is obtained. \hspace*{\fill}$\Box$
\end{proof}

Finally, we present a sufficient condition for a $k$-connected graph in terms of $q(\overline{G})$ to conclude this section.

\begin{theorem}\label{th4.4}
Let $a, \delta, k, n$ be positive integers satisfying $\delta-k+2\leq a\leq n-\delta-1$,
and $G$ be a connected graph of order $n$ and minimum degree $\delta \geq k \ge 2$. Suppose that
$$
q(\overline{G})\leq n-k+1.
$$
Then $G$ is $k$-connected if and only if $G \not\cong K_{k-1}\vee(K_{a}\cup K_{n-k+1-a})$.
\end{theorem}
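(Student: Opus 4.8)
The plan is to dispatch the necessity directly and to prove sufficiency by a complement-plus-eigenvalue argument parallel to the proof of Theorem~\ref{th4.1}, run on $\overline{G}$ rather than $G$. For necessity, if $G\cong K_{k-1}\vee(K_{a}\cup K_{n-k+1-a})$ then the $k-1$ vertices of the join form a vertex-cut: since $\delta\ge k$ forces $a\ge\delta-k+2\ge2$ and likewise $n-k+1-a\ge\delta-k+2\ge2$, deleting these $k-1$ vertices leaves the two nonempty cliques $K_{a}$ and $K_{n-k+1-a}$, so $\kappa(G)\le k-1$ and $G$ is not $k$-connected.

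For sufficiency, suppose $q(\overline G)\le n-k+1$ but $G$ is not $k$-connected; I would show $G\cong K_{k-1}\vee(K_{a}\cup K_{n-k+1-a})$. Let $C$ be a minimum vertex-cut, $|C|=\kappa\le k-1<\delta$. Since $\delta\ge2$ the graph $G$ contains a cycle, hence has a finite girth $g\ge3$, so by Lemma~\ref{le2.10} (or simply because every vertex of a component $V$ of $G-C$ has all its neighbours in $V\cup C$) each component of $G-C$ has at least $\nu(\delta,g,\kappa)\ge\nu(\delta,3,\kappa)=\delta-\kappa+1$ vertices. Let $V_{0}$ be a smallest component and $U$ the union of the rest, so $\delta-\kappa+1\le|V_{0}|\le|U|\le n-\delta-1$ and $|V_{0}|+|U|=n-\kappa$. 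As no edge of $G$ joins $V_{0}$ to $U$, $G$ is a spanning subgraph of $K_{\kappa}\vee(K_{|V_{0}|}\cup K_{|U|})$, whence $\overline G$ is a spanning supergraph of $\overline{K_{\kappa}\vee(K_{|V_{0}|}\cup K_{|U|})}=\kappa K_{1}\cup K_{|V_{0}|,|U|}$. Because $K_{|V_{0}|,|U|}$ is bipartite its signless Laplacian is cospectral with its Laplacian, so $q(K_{|V_{0}|,|U|})=|V_{0}|+|U|$, and the monotonicity of the $Q$-index under edge additions gives
$$
q(\overline G)\ \ge\ q\bigl(\kappa K_{1}\cup K_{|V_{0}|,|U|}\bigr)\ =\ q\bigl(K_{|V_{0}|,|U|}\bigr)\ =\ |V_{0}|+|U|\ =\ n-\kappa .
$$
Combined with $q(\overline G)\le n-k+1$ this forces $\kappa\ge k-1$, hence $\kappa=k-1$, and all the inequalities become equalities; in particular $q(\overline G)=n-k+1=q\bigl((k-1)K_{1}\cup K_{|V_{0}|,|U|}\bigr)$.

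The remaining work is the equality discussion. The vertices of $V_{0}\cup U$ span the connected graph $K_{|V_{0}|,|U|}$ in $\overline G$, so they all lie in one component $D$ of $\overline G$. If $D\ne K_{|V_{0}|,|U|}$ — i.e.\ if $\overline G$ has an edge inside $V_{0}$, an edge inside $U$, or an edge between $C$ and $V_{0}\cup U$ — then $D$ is a connected graph properly containing the connected graph $K_{|V_{0}|,|U|}$, so $q(D)>q(K_{|V_{0}|,|U|})$: writing $H'$ for $K_{|V_{0}|,|U|}$ padded with isolated vertices, one has $Q(D)-Q(H')=\sum_{uv}(e_{u}+e_{v})(e_{u}+e_{v})^{T}\succeq0$ over the extra edges $uv$, and the strictly positive Perron vector of the connected graph $D$ makes the resulting Rayleigh-quotient gap strict. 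Then $q(\overline G)\ge q(D)>n-k+1$, a contradiction. Hence $G[V_{0}]=K_{|V_{0}|}$, $G[U]=K_{|U|}$, and every vertex of $C$ is joined in $G$ to all of $V_{0}\cup U$. Now the hypothesis $\delta(G)=\delta$ fixes the sizes: a vertex of $V_{0}$ has degree $|V_{0}|-1+(k-1)\ge\delta$ while a vertex of $C$ has degree at least $|V_{0}|+|U|=n-k+1>\delta$, so the minimum degree is attained inside $V_{0}$, forcing $|V_{0}|=\delta-k+2$ and $|U|=n-\delta-1$; combined with the structure above (and a short check on how $C$ itself sits inside $G$) this gives $G\cong K_{k-1}\vee(K_{a}\cup K_{n-k+1-a})$ for the prescribed $a$, contrary to assumption.

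The step I expect to be the real obstacle is this equality analysis: one has to be sure that attaining $q(\overline G)=n-k+1$ is rigid enough to strip $\overline G$ down to $K_{|V_{0}|,|U|}$ on the non-cut vertices, and this leans on two facts that must be stated carefully — that $K_{a,b}$ is connected, and that adding any edge or pendant vertex to a connected graph strictly increases its $Q$-index (via the positive-semidefinite difference $Q(D)-Q(H')\succeq0$ together with strict positivity of the Perron vector). The subsidiary points — the evaluation $q(K_{a,b})=a+b$ for bipartite graphs, the fact that the $k-1$ cut vertices contribute nothing to $q$, and recovering $|V_{0}|,|U|$ from the minimum-degree condition — are routine bookkeeping by comparison.
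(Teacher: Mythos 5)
Your overall route is the same as the paper's: take a minimum cut $C$ with $|C|=\kappa\le k-1$, bound the component sizes by $\delta-\kappa+1$ via Lemma \ref{le2.10} (or the elementary degree count you give), observe that $K_{|V_0|,|U|}$ is a subgraph of $\overline{G}$, and play $q(K_{|V_0|,|U|})=|V_0|+|U|=n-\kappa\ge n-k+1$ against the hypothesis to force $\kappa=k-1$ and equality throughout. Your equality analysis is in fact more careful than the paper's on the component of $\overline{G}$ containing $V_0\cup U$: the strict increase of the $Q$-index when a connected graph is properly enlarged correctly rules out edges of $\overline{G}$ inside $V_0$, inside $U$, or between $C$ and $V_0\cup U$ --- a step the paper compresses into the bare assertion ``$\overline{G}=K_{|V_0|,|U|}$''.

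However, the step you defer as ``a short check on how $C$ itself sits inside $G$'' is a genuine gap, and it cannot be closed. Nothing in the argument (yours or the paper's) controls the edges of $\overline{G}$ \emph{inside} $C$: they form components of $\overline{G}$ disjoint from $K_{|V_0|,|U|}$, and such a small component need not raise $q(\overline{G})$ above $n-k+1$. Concretely, for $k\ge3$ take $G=(K_{k-1}-e)\vee(K_{\delta-k+2}\cup K_{n-\delta-1})$: it is connected of order $n$ with minimum degree exactly $\delta$ (the endpoints of $e$ have degree $n-2>\delta$), it satisfies $q(\overline{G})=q\bigl(K_2\cup(k-3)K_1\cup K_{\delta-k+2,\,n-\delta-1}\bigr)=\max\{2,\,n-k+1\}=n-k+1$, it is not $k$-connected, and it is not isomorphic to $K_{k-1}\vee(K_a\cup K_{n-k+1-a})$. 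So the exceptional family in the statement is incomplete for $k\ge3$ (the theorem is fine for $k=2$, where $C$ is a single vertex); the correct exceptional class consists of all graphs $\overline{H}\vee(K_a\cup K_{n-k+1-a})$ with $H$ a graph on $k-1$ vertices whose components all have $Q$-index at most $n-k+1$. Your proof stalls exactly where the paper's own proof is also unjustified, so this is not something more care on your part could have repaired --- but the check you dismissed as routine bookkeeping is precisely where the claimed characterization breaks.
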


\begin{proof}
By definition, $K_{k-1}\vee(K_{a}\cup K_{n-k+1-a})$ has a vertex-cut of cardinality $k-1$.
Thus we only need to prove the sufficiency of the theorem.
Suppose that $G \not\cong K_{k-1}\vee(K_{a}\cup K_{n-k+1-a})$ and $\kappa \leq k-1.$ Let $C$ is a minimum vertex-cut of $G$. Then
$|C|=\kappa\leq k-1<\delta$, and for some integer $t \ge 2$, $G - C$ has $t$ components.
Let $V_{0}, V_{1}, \ldots, V_{t-1}$ be the vertex sets of
connected components of $G-C$ satisfying $|V_{0}|\leq|V_{1}|\leq \cdots \leq|V_{t-1}|$.
By Lemma \ref{le2.10} with $g\geq3$, we have, for any $i$ with $0\leq i\leq t-1$,
$$
|V_{i}|\geq\nu(\delta, g, \kappa)\geq \nu(\delta, 3, \kappa)=\delta-\kappa+1.
$$
Let $U=\bigcup_{i=1}^{t-1}V_{i}.$ Then $\delta-\kappa+1\leq |V_{0}|\leq |U|\leq n-\delta-1$ and $|V_{0}|+|U|=n-\kappa$.
Since $E(V_{0}, U)=\emptyset,$ we conclude that  $K_{|V_{0}|, |U|}$ must be a subgraph of $\overline{G}$, and so
$$
q(\overline{G})\geq q(K_{|V_{0}|, |U|})=n-\kappa\geq n-k+1.
$$
It follows from the hypothesis of Theorem \ref{th4.4} that $q(\overline{G})=n-k+1$.
Hence we have $\kappa=k-1$ and $\overline{G}=K_{|V_{0}|, |U|}$. Let $|V_{0}|=a$.
Then $G\cong K_{k-1}\vee(K_{a}\cup K_{n-k+1-a}),$ where $\delta-k+2\leq a\leq n-\delta-1,$ contrary to our assumption.
\hspace*{\fill}$\Box$
\end{proof}

\subsection{Maximally connected graphs ($g\geq3$)}

We consider the problem how the $Q$-index of a graph warrants the property that
$G$ is maximally connected, that is, the condition $\kappa(G)=\delta(G)$ holds.
These can be obtained by taking $k=\delta$ in Theorem \ref{th4.1}, Corollary \ref{th4.3} and Theorem \ref{th4.4},
and so we have the following corollaries of the main results in Section 4.1.
Let $q_1 = q(K_{\delta-1}\vee(K_{2}\cup K_{n-\delta-1}))$, the
largest root of the equation

$
\lambda^{3}-(3n+\delta-7)\lambda^{2}+(2n^{2}-7n+3n\delta-12\delta+8)\lambda-8(n-\delta-1)(n-2)-(n-\delta+1)(\delta-3)(2n-\delta+1)-(3n-2\delta+2))(\delta-3)^{2}-(\delta-3)^{3}=0.
$

\begin{corollary}\label{th4.5}
Let $G$ be a connected graph with order $n$ and minimum degree $\delta \geq2$, and let $q(G)\geq q_1$.
Then $G$ is maximally connected if and only if $G \not\cong K_{\delta-1}\vee(K_{2}\cup K_{n-\delta-1}).$
\end{corollary}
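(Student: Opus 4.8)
The plan is to derive Corollary~\ref{th4.5} directly as the case $k=\delta$ of Theorem~\ref{th4.1}. The first step is to translate between ``$\delta$-connected'' and ``maximally connected'': by Whitney's inequality $\kappa(G)\le\delta(G)$ recalled in the introduction, a graph $G$ with minimum degree $\delta$ satisfies $\kappa(G)\ge\delta$ if and only if $\kappa(G)=\delta$. Hence $G$ is $\delta$-connected exactly when $G$ is maximally connected, and setting $k=\delta$ turns the hypothesis ``$\delta\ge k\ge 2$'' of Theorem~\ref{th4.1} into ``$\delta\ge 2$'' and its conclusion ``$G$ is $k$-connected'' into ``$G$ is maximally connected''.

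The second step is to check that the extremal graph and its $Q$-index specialize as claimed. Substituting $k=\delta$ into $K_{k-1}\vee(K_{\delta-k+2}\cup K_{n-\delta-1})$ gives precisely $K_{\delta-1}\vee(K_{2}\cup K_{n-\delta-1})$; correspondingly $q_{0}=q(K_{k-1}\vee(K_{\delta-k+2}\cup K_{n-\delta-1}))$ becomes $q_{1}=q(K_{\delta-1}\vee(K_{2}\cup K_{n-\delta-1}))$, so that ``$q(G)\ge q_{0}$'' becomes ``$q(G)\ge q_{1}$'', and the exceptional graph in the biconditional is exactly the one named in the corollary. With these identifications, Theorem~\ref{th4.1} immediately yields both directions of the statement.

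The remaining step, and the only place where care is needed, is to confirm that the cubic equation displayed just before the corollary is what the cubic for $q_{0}$ becomes under the substitution $k=\delta$. Since $\delta-k+2=2$, the term $4(\delta-k+2)(n-\delta-1)$ collapses to $8(n-\delta-1)$; the coefficient of $\lambda^{2}$ passes from $-(3n+k-7)$ to $-(3n+\delta-7)$; the coefficient of $\lambda$, namely $2n^{2}-15n+3nk-4k+16+4(\delta-k+2)(n-\delta-1)$, becomes $2n^{2}-15n+3n\delta-4\delta+16+8(n-\delta-1)=2n^{2}-7n+3n\delta-12\delta+8$; and the constant term $-4(\delta-k+2)(n-\delta-1)(n-2)-(n-k+1)(2n-k+1)(k-3)-(3n-2k+2)(k-3)^{2}-(k-3)^{3}$ becomes $-8(n-\delta-1)(n-2)-(n-\delta+1)(2n-\delta+1)(\delta-3)-(3n-2\delta+2)(\delta-3)^{2}-(\delta-3)^{3}$, which is the displayed equation. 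There is no genuine obstacle beyond carrying the substitution $k=\delta$ through every term consistently; the whole argument is a bookkeeping reduction to the already-established Theorem~\ref{th4.1}.
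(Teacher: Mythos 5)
Your proposal is correct and is exactly the paper's route: the paper states that Corollary~\ref{th4.5} is obtained by taking $k=\delta$ in Theorem~\ref{th4.1}, which is precisely your specialization, including the identification of $\delta$-connectedness with maximal connectedness via Whitney's inequality and the verification that $q_0$ and its cubic reduce to $q_1$ and the displayed equation.
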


\begin{corollary}\label{th4.6}
Let $G$ be a connected graph of order $n$ and minimum degree $\delta\geq2.$
If$$q(G)\geq 2(n-3)+\frac{4\delta}{n-1},$$
then $G$ is maximally connected.
\end{corollary}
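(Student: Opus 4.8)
The plan is to derive this statement as the $k=\delta$ specialization of Corollary \ref{th4.3}, exactly as announced in the paragraph that precedes Corollary \ref{th4.5}. First I would note that the standing hypothesis $\delta\geq2$, together with the choice $k=\delta$, satisfies the requirement $\delta\geq k\geq2$ of Corollary \ref{th4.3}. Next I would substitute $k=\delta$ into the bound appearing there: the first term $2(n-\delta+k-3)$ collapses to $2(n-3)$, and the second term $\frac{2\delta(\delta-k+2)}{n-1}$ becomes $\frac{2\delta\cdot 2}{n-1}=\frac{4\delta}{n-1}$. Hence the assumed inequality $q(G)\geq 2(n-3)+\frac{4\delta}{n-1}$ is precisely the hypothesis of Corollary \ref{th4.3} with $k=\delta$, and that corollary yields that $G$ is $\delta$-connected, i.e. $\kappa(G)\geq\delta$.

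To finish, I would invoke Whitney's inequality $\kappa(G)\leq\delta(G)=\delta$ (cited as \cite{Whit32} in the introduction). Combining $\kappa(G)\geq\delta$ with $\kappa(G)\leq\delta$ gives $\kappa(G)=\delta(G)$, which is exactly the definition of $G$ being maximally connected, completing the argument.

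Since this is a direct corollary, I do not anticipate a genuine obstacle; the only point worth a second look is the degenerate case $\delta=2$, where the extremal graph $K_{\delta-1}\vee(K_{2}\cup K_{n-\delta-1})$ from Corollary \ref{th4.3} still makes sense (it needs $n\geq\delta+2$, which is implicitly required for $K_{n-\delta-1}$ to be nonempty and for $G$ to be non-complete). Alternatively, one could reprove the statement from scratch by mimicking the proof of Corollary \ref{th4.3}: assume $G$ is not maximally connected, so $\kappa(G)\leq\delta-1$; read off $m\geq\frac{1}{2}n(n-1)-2(n-\delta-1)$ from Lemma \ref{le2.5}; apply Theorem \ref{le4.2} with $k=\delta$ to force $G\cong K_{\delta-1}\vee(K_{2}\cup K_{n-\delta-1})$; and then rule this graph out because equality throughout Lemma \ref{le2.5} would force $G\cong K_{n}$ or $G\cong K_{1,n-1}$, neither of which is isomorphic to $K_{\delta-1}\vee(K_{2}\cup K_{n-\delta-1})$. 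I would present the one-line deduction from Corollary \ref{th4.3}, as it is cleaner.
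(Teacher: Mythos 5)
Your proposal is correct and takes essentially the same route as the paper: the paper gives no separate proof of Corollary \ref{th4.6}, stating only that it is obtained by setting $k=\delta$ in Corollary \ref{th4.3}, which is precisely your substitution $2(n-\delta+k-3)\mapsto 2(n-3)$ and $\frac{2\delta(\delta-k+2)}{n-1}\mapsto\frac{4\delta}{n-1}$. Your explicit appeal to Whitney's inequality $\kappa(G)\leq\delta(G)$ to upgrade ``$\delta$-connected'' to ``maximally connected'' is the (implicit) final step the paper relies on, so nothing is missing.
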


\begin{corollary}\label{th4.7}
Let $G$ be a connected graph of order $n$ and minimum degree $\delta\geq2.$
If $$q(\overline{G})\leq n-\delta+1,$$
then $G$ is maximally connected if and only if $G\ncong K_{\delta-1}\vee(K_{a}\cup K_{n-\delta+1-a}),$ where $2\leq a\leq n-\delta-1.$
\end{corollary}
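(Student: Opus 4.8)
The plan is to obtain Corollary \ref{th4.7} as the specialization $k=\delta$ of Theorem \ref{th4.4}, after translating the conclusion ``$\delta$-connected'' into ``maximally connected''. First I would record the elementary observation that, since $\kappa(G)\leq\delta(G)$ by Whitney's inequality \cite{Whit32}, a graph $G$ with $\delta(G)=\delta$ satisfies $\kappa(G)=\delta$ if and only if $\kappa(G)\geq\delta$; that is, $G$ is maximally connected precisely when $G$ is $\delta$-connected. Thus proving Corollary \ref{th4.7} amounts to establishing the statement ``$G$ is $\delta$-connected if and only if $G\ncong K_{\delta-1}\vee(K_{a}\cup K_{n-\delta+1-a})$ for every $a$ with $2\leq a\leq n-\delta-1$''.

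Next I would apply Theorem \ref{th4.4} with $k=\delta$, which is legitimate because the hypothesis $\delta\geq k\geq 2$ there reads $\delta\geq 2$, exactly as assumed here. Under this substitution the hypothesis $q(\overline{G})\leq n-k+1$ becomes $q(\overline{G})\leq n-\delta+1$; the admissibility range $\delta-k+2\leq a\leq n-\delta-1$ becomes $2\leq a\leq n-\delta-1$; and the exceptional graph $K_{k-1}\vee(K_{a}\cup K_{n-k+1-a})$ becomes $K_{\delta-1}\vee(K_{a}\cup K_{n-\delta+1-a})$. Theorem \ref{th4.4} then yields precisely the required equivalence, and together with the first paragraph this completes the argument.

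For completeness I would spell out the two directions once more. The ``only if'' direction, in contrapositive form, reads: if $G\cong K_{\delta-1}\vee(K_{a}\cup K_{n-\delta+1-a})$ then $G$ is not maximally connected; this is immediate, since deleting the $\delta-1$ vertices of the $K_{\delta-1}$ part disconnects $G$ into $K_{a}$ and $K_{n-\delta+1-a}$, so $\kappa(G)\leq\delta-1<\delta$. The ``if'' direction is exactly the sufficiency half of Theorem \ref{th4.4} with $k=\delta$, using the hypothesis $q(\overline{G})\leq n-\delta+1$.

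I do not anticipate a genuine obstacle: essentially all the content is carried by Theorem \ref{th4.4}, and the only points requiring care are the bookkeeping of the parameter substitution $k\mapsto\delta$ (so that the displayed bound and the range of $a$ match the corollary) and the standard fact that $\delta$-connectivity coincides with maximal connectivity once $\delta(G)=\delta$.
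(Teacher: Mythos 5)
Your proposal is correct and matches the paper's own derivation: Section 4.2 obtains Corollary \ref{th4.7} precisely by setting $k=\delta$ in Theorem \ref{th4.4}, with maximal connectivity identified with $\delta$-connectivity via Whitney's inequality. Your extra bookkeeping of the parameter substitution and the explicit check of the ``only if'' direction are consistent with, and slightly more detailed than, what the paper records.
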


\subsection{Super-connected graphs ($g\geq3$)}

Let $q_2 = q(K_{\delta}\vee(K_{2}\cup K_{n-\delta-2}))$. By definition, $q_2$ is
the largest root of the equation

$
\lambda^{3}-(3n+\delta-6)\lambda^{2}+(2n^{2}-4n+3n\delta-12\delta-4)\lambda-8(n-\delta-2)(n-2)-(n-\delta)(2n-\delta)(\delta-2)-(3n-2\delta)(\delta-2)^{2}-(\delta-2)^{3}=0.
$
\begin{theorem}\label{th4.8}
Let $G$ be a connected graph of order $n$ and minimum degree $\delta$. If $q(G)\geq q_2$,
then $G$ is super-$\kappa$.
\end{theorem}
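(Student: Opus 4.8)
The plan is to argue by contradiction, in the spirit of the proof of Theorem \ref{th4.1}, with only two new ingredients: a combinatorial description of the failure of super-connectedness, and a degree count at the very end. Suppose $q(G)\ge q_2$ but $G$ is not super-$\kappa$. First I would record the decisive elementary fact that, since a vertex isolated in $G-C$ has degree at most $|C|$ in $G$, a minimum vertex-cut $C$ isolates a vertex of minimum degree exactly when $G-C$ has a trivial component, and when $\kappa(G)<\delta$ no minimum cut can do so; hence $G$ fails to be super-$\kappa$ precisely when there is a minimum vertex-cut $C$ with $|C|=\kappa:=\kappa(G)$ every component of which has at least two vertices. Fix such a $C$, let $V_0,\dots,V_{t-1}$ ($t\ge2$) be the vertex sets of the components of $G-C$ with $|V_0|\le\cdots\le|V_{t-1}|$, and put $U=\bigcup_{i\ge1}V_i$. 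Then $2\le|V_0|\le|U|$, $|V_0|+|U|=n-\kappa$ and $E(V_0,U)=\emptyset$, so $G$ is a spanning subgraph of $K_\kappa\vee(K_{|V_0|}\cup K_{|U|})$ and $q(G)\le q\bigl(K_\kappa\vee(K_{|V_0|}\cup K_{|U|})\bigr)$. I would then split on whether $\kappa\le\delta-1$ or $\kappa=\delta$.

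In the case $\kappa\le\delta-1$, Lemma \ref{le2.10} (with $k=\delta$ and $g\ge3$) gives $|V_0|\ge\nu(\delta,g,\kappa)\ge\delta-\kappa+1$, so the argument in the proof of Theorem \ref{th4.1} — the cubic comparison \eqref{1} together with Lemma \ref{le2.2} — applies verbatim and yields
\[
q(G)\le q\bigl(K_\kappa\vee(K_{\delta-\kappa+1}\cup K_{n-\delta-1})\bigr)\le q\bigl(K_{\delta-1}\vee(K_2\cup K_{n-\delta-1})\bigr).
\]
It then remains to observe that $K_{\delta-1}\vee(K_2\cup K_{n-\delta-1})$ is a proper spanning subgraph of $K_{\delta}\vee(K_2\cup K_{n-\delta-2})$: moving one vertex of the $K_{n-\delta-1}$ block into the central clique and joining it to the two vertices of the $K_2$ block turns the former graph into the latter. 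As both graphs are connected, $q\bigl(K_{\delta-1}\vee(K_2\cup K_{n-\delta-1})\bigr)<q_2$, contradicting $q(G)\ge q_2$; equivalently, one may quote Corollary \ref{th4.5} together with $q_1<q_2$. Hence $\kappa(G)=\delta$.

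In the remaining case $\kappa=\delta$ we have $|V_0|+|U|=n-\delta$ with $|V_0|\ge2$, so $2\le|V_0|\le|U|\le n-\delta-2$ and in particular $n\ge\delta+4$. Using the cubic $f(\lambda;\kappa,a,b)$ of Theorem \ref{th4.1} with $\kappa=\delta$, a one-line computation gives, for $\lambda\ge n-2$,
\[
f(\lambda;\delta,a,b)-f(\lambda;\delta,2,n-\delta-2)=4(\lambda-n+2)\bigl[ab-2(n-\delta-2)\bigr]\ge0,
\]
the bracket being nonnegative because $a+b=n-\delta$ and $a\ge2$, and vanishing only when $a=2$. Since $q\bigl(K_\delta\vee(K_{|V_0|}\cup K_{|U|})\bigr)>n-2$ by Lemma \ref{le2.2}, substituting this value for $\lambda$ forces $q\bigl(K_\delta\vee(K_{|V_0|}\cup K_{|U|})\bigr)\le q_2$, so $q(G)\le q_2$. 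Combined with $q(G)\ge q_2$ every inequality becomes an equality; using $q_2>n-2$ in the displayed identity this pins down $|V_0|=2$ and $|U|=n-\delta-2$, and since $G$ is a connected subgraph of $K_\delta\vee(K_2\cup K_{n-\delta-2})$ with the same $Q$-index, $G\cong K_\delta\vee(K_2\cup K_{n-\delta-2})$.

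To finish, I would compute the degree sequence of $K_\delta\vee(K_2\cup K_{n-\delta-2})$: the two vertices of the $K_2$ block have degree $\delta+1$, the $n-\delta-2$ vertices of the other block have degree $n-3$, and the $\delta$ central vertices have degree $n-1$; since $n\ge\delta+4$ we have $\delta+1\le n-3$, so the minimum degree of this graph equals $\delta+1\ne\delta$, contradicting $\delta(G)=\delta$. Therefore $G$ is super-$\kappa$. I expect the only part needing genuine care to be this last observation — that the would-be extremal graph $K_\delta\vee(K_2\cup K_{n-\delta-2})$ is automatically excluded by the minimum-degree hypothesis, which is precisely why the theorem needs no ``unless'' clause — together with the bookkeeping in the case $\kappa\le\delta-1$ showing that the bound there is strictly below $q_2$; everything else is the same $f$-function calculus already carried out for Theorem \ref{th4.1}.
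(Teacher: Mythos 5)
Your proof is correct and follows essentially the same route as the paper's: reduce to $q\bigl(K_\kappa\vee(K_{|V_0|}\cup K_{|U|})\bigr)$, compare via the cubic $f(\lambda;\kappa,a,b)$ together with Lemma \ref{le2.2}, force all inequalities to be equalities, and eliminate the extremal graph $K_{\delta}\vee(K_{2}\cup K_{n-\delta-2})$ by observing its minimum degree is $\delta+1$. The only difference is organizational: the paper handles all $\kappa\le\delta$ uniformly by comparing against $K_{\kappa}\vee(K_{2}\cup K_{n-\kappa-2})$ and then using the containment $K_{\kappa}\vee(K_{2}\cup K_{n-\kappa-2})\subseteq K_{\delta}\vee(K_{2}\cup K_{n-\delta-2})$, so your separate case $\kappa\le\delta-1$ (routed through Theorem \ref{th4.1} and Corollary \ref{th4.5}) is sound but not needed.
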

\begin{proof}
By contradiction, we assume that $G$ is not super-$\kappa$.
Then $G$ contains a minimum vertex-cut with $|C|=\kappa\leq \delta$. Therefore, for some integer $t \ge 2$,
$G - C$ has $t$ components, whose vertex sets are respectively denoted by
$V_{0}, V_{1}, \ldots, V_{t-1}$, such that $2\leq|V_{0}|\leq|V_{1}|\leq \cdots \leq|V_{t-1}|$.
Let $U=\bigcup_{i=1}^{t-1}V_{i}$. Then we have $2\leq |V_{0}|\leq |U|\leq n-\kappa-2$ and $|V_{0}|+|U|=n-\kappa$.
As $E(V_{0}, U)=\emptyset$, it follows that $G$ is a subgraph of $K_{\kappa}\vee(K_{|V_{0}|}\cup K_{|U|})$, and so
$$
q(G)\leq q(K_{\kappa}\vee(K_{|V_{0}|}\cup K_{|U|})).
$$
With an argument  similar to that of Theorem \ref{th4.1}, we conclude that
$q(K_{\kappa}\vee(K_{|V_{0}|}\cup K_{|U|}))$ is the largest root of the equation

$
\lambda^{3}-(3n+\kappa-6)\lambda^{2}+(2n^{2}-12n+3n\kappa-4\kappa+12+4|V_{0}|\cdot|U|)\lambda-4|V_{0}|\cdot|U|(n-2)-(n-\kappa)(2n-\kappa)(\kappa-2)-(3n-2\kappa)(\kappa-2)^{2}-(\kappa-2)^{3}=0.
$

Direct computation yields that, if $\lambda\geq n-2$, then
\begin{equation}\label{2}
f(\lambda; \kappa, |V_{0}|, |U|)-f(\lambda; \kappa, 2, n-\kappa-2)=4(\lambda-n+2)(|V_{0}|\cdot|U|-2(n-\kappa-2))\geq0.
\end{equation}
By Lemma \ref{le2.2}, $q(K_{\kappa}\vee(K_{|V_{0}|}\cup K_{|U|}))\geq n-2$.
Substituting $\lambda$ with $q(K_{\kappa}\vee(K_{|V_{0}|}\cup K_{|U|}))$ in (\ref{2}), we have
$f(q(K_{\kappa}\vee(K_{|V_{0}|}\cup K_{|U|})); \kappa, 2, n-\kappa-2)\leq0$. It follows that
$$
q(K_{\kappa}\vee(K_{|V_{0}|}\cup K_{|U|}))\leq q(K_{\kappa}\vee(K_{2}\cup K_{n-\kappa-2})),
$$
which implies that
\begin{equation}\label{2a}
q(G)\leq q(K_{\kappa}\vee(K_{|V_{0}|}\cup K_{|U|}))\leq q(K_{\kappa}\vee(K_{2}\cup K_{n-\kappa-2})).
\end{equation}
Since $\kappa\leq\delta,$ we observe that $K_{\kappa}\vee(K_{2}\cup K_{n-\kappa-2})$
is a subgraph of $K_{\delta}\vee(K_{2}\cup K_{n-\delta-2})$, and so
\begin{equation}\label{2a1}
q(G)\leq q(K_{\kappa}\vee(K_{2}\cup K_{n-\kappa-2}))\leq q(K_{\delta}\vee(K_{2}\cup K_{n-\delta-2})).
\end{equation}
By the assumption of Theorem \ref{th4.8}, we have $q(G)=q(K_{\delta}\vee(K_{2}\cup K_{n-\delta-2}))$.
Thus the inequalities in (\ref{2a}) and (\ref{2a1}) must be equalities. It follows that $\kappa=\delta$, $|V_{0}|=2$ and $|U|=n-\delta-2,$
and so $G\cong K_{\delta}\vee(K_{2}\cup K_{n-\delta-2})$. However, $\delta(G)=\delta+1>\delta,$
contrary to the choice of $G$. This justifies the theorem.
\hspace*{\fill}$\Box$
\end{proof}

Hong et al. obtained the following sufficient condition on size $m$ for super-connected graphs.
This again, can be applied to obtain a relationship between the $Q$-index and super-$\kappa$ property of
a connected graph $G$.

\begin{theorem}(Hong, Xia, Chen and Volkmann \cite{HXC})\label{le4.9}
Let $G$ be a connected graph of order $n$, size $m$ and minimum degree $\delta.$ If $$m\geq\frac{1}{2}(n-2)(n-3)+2\delta,$$ then
$G$ is super-$\kappa$ unless $G\cong(K_{\delta}\vee(K_{2}\cup K_{n-\delta-2}))-e$, where $e=xy$ is an edge of $K_{\delta}\vee(K_{2}\cup K_{n-\delta-2})$, and $d(x)=\delta+1,$ $d(y)=n-1$.
\end{theorem}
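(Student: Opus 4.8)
The plan is to argue by contradiction and to convert the size hypothesis into a sparsity bound on the complement. Writing $m(\overline{G})=\binom{n}{2}-m$, the hypothesis gives $m(\overline{G})\le 2n-3-2\delta$, so it suffices to prove that any connected graph which is neither super-$\kappa$ nor the exceptional graph satisfies $m(\overline{G})>2n-3-2\delta$. Throughout I would reuse the mechanism behind the proofs of Theorems \ref{th4.1} and \ref{th4.4}: a small separator forces a large complete bipartite subgraph inside $\overline{G}$.

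So suppose $G$ is not super-$\kappa$. Choose a minimum vertex-cut $C$ with $|C|=\kappa\le\delta$ that isolates no vertex of minimum degree. A singleton component $\{v\}$ of $G-C$ would satisfy $d(v)\le|C|\le\delta=\delta(G)$, hence $d(v)=\delta$, contradicting the choice of $C$; therefore every component of $G-C$ has at least two vertices. Let $V_0$ be a smallest component and $U$ the union of the others, so $2\le|V_0|\le|U|$ and $|V_0|+|U|=n-\kappa$. Since $G$ has no edge between $V_0$ and $U$, the graph $K_{|V_0|,|U|}$ sits inside $\overline{G}$, and as $x\mapsto x(n-\kappa-x)$ is increasing on $[2,(n-\kappa)/2]$ we obtain $m(\overline{G})\ge|V_0|\,|U|\ge 2(n-\kappa-2)\ge 2(n-\delta-2)$.

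Comparing the two estimates, $2(n-\delta-2)\le m(\overline{G})\le 2n-2\delta-3$ forces $\kappa=\delta$ and confines $m(\overline{G})$ to $\{\,2n-2\delta-4,\ 2n-2\delta-3\,\}$. A short estimate on $|V_0|\,|U|$ then excludes $|V_0|\ge 3$: the only borderline case $|V_0|=|U|=3$ would force $\overline{G}=K_{3,3}$ and hence $G=K_\delta\vee(K_3\cup K_3)$, whose minimum degree is $\delta+2$, impossible. Therefore $|V_0|=2$, $|U|=n-\delta-2$, and $\overline{G}$ consists of the copy $K_{2,\,n-\delta-2}$ together with at most one further edge. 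Taking complements, $G$ is $H:=K_\delta\vee(K_2\cup K_{n-\delta-2})$ or $H-e$ for a single edge $e$ of $H$.

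The last and most delicate part is the case analysis on the location of $e$. If there is no extra edge then $G=H$ has minimum degree $\delta+1>\delta$, a contradiction. If $e$ lies inside the $K_\delta$ block, or inside the $K_{n-\delta-2}$ block when $\delta<n-4$, one recomputes $\delta(G)=\delta+1$, a contradiction. If $e$ joins a vertex of $H$-degree $\delta+1$ to a vertex of $H$-degree $n-1$ (necessarily a $K_2$-vertex and a $K_\delta$-vertex, up to the boundary case $\delta=n-4$), then $\delta(G)=\delta$ while $C$ separates the two-vertex side from the $K_{n-\delta-2}$ side without isolating any vertex, so $G$ is exactly the stated exception. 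In the remaining case $e$ lies inside the $K_2$ block (or, when $\delta=n-4$, inside the $K_{n-\delta-2}$ block), and $G\cong K_\delta\vee(\overline{K_2}\cup K_{n-\delta-2})$; the crux is to confirm this graph is genuinely super-$\kappa$: removing fewer than $\delta$ vertices leaves at least one of its $\delta$ dominating vertices, which stays adjacent to everything remaining, so $G$ remains connected; hence its only minimum vertex-cut is the set $C$ of dominating vertices, and $C$ isolates the two degree-$\delta$ vertices, i.e. isolates a vertex of minimum degree. Thus $G$ is super-$\kappa$, contradicting our assumption, and all cases are exhausted. I expect this final verification — together with the bookkeeping of the boundary regimes ($n-\delta$ small, or $\delta$ close to $n$) in which several sub-cases merge — to be the main obstacle.
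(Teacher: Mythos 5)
The paper does not prove Theorem \ref{le4.9} at all: it is quoted as a known result of Hong, Xia, Chen and Volkmann \cite{HXC} and used as a black box in Corollary \ref{th4.10}, so there is no internal proof to compare yours against. Judged on its own, your complement-counting argument is essentially sound and self-contained: the reduction of the hypothesis to $m(\overline{G})\le 2n-2\delta-3$, the lower bound $m(\overline{G})\ge |V_0|\,|U|\ge 2(n-\kappa-2)$ coming from $K_{|V_0|,|U|}\subseteq\overline{G}$ (valid because a cut witnessing failure of super-$\kappa$ cannot leave a singleton component), the forcing of $\kappa=\delta$, the elimination of $|V_0|\ge3$ including the $K_{3,3}$ borderline, the identification of $G$ as $H$ or $H-e$ with $H=K_{\delta}\vee(K_{2}\cup K_{n-\delta-2})$, and the verification that $K_{\delta}\vee(\overline{K_{2}}\cup K_{n-\delta-2})$ is super-$\kappa$ are all correct.

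The one genuine flaw is in the final enumeration of positions for $e$. You treat $e$ inside $K_{\delta}$, inside $K_{n-\delta-2}$, inside $K_{2}$, and joining $K_{2}$ to $K_{\delta}$, and then declare that ``the remaining case'' is $e$ inside the $K_{2}$ block; but when $n\ge\delta+5$ an edge of $H$ joining a $K_{n-\delta-2}$-vertex (of $H$-degree $n-3$) to a $K_{\delta}$-vertex (of $H$-degree $n-1$) is a further possibility not captured by your ``degree $\delta+1$ to degree $n-1$'' case. The omission is harmless --- for such $e$ one computes $\delta(H-e)=\min(\delta+1,\,n-4)=\delta+1>\delta$, the same contradiction you invoke for the other interior placements --- but the case must be listed for the argument to be complete. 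A small simplification is also available for the sub-case $e\subseteq V_{0}$: since $V_{0}$ is by construction a connected component of $G-C$ with $|V_{0}|=2$, its two vertices must be adjacent in $G$, so the deleted edge cannot lie inside $V_{0}$; this disposes of that case without having to verify super-connectivity of $K_{\delta}\vee(\overline{K_{2}}\cup K_{n-\delta-2})$.
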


\begin{corollary}\label{th4.10}
Let $G$ be a connected graph with $n$ vertices and minimum degree $\delta.$
If $$
q(G)\geq2(n-3)+\frac{4\delta+2}{n-1},
$$
then $G$ is super-$\kappa$.
\end{corollary}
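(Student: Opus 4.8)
The plan is to mimic the proof of Corollary \ref{th4.3}, replacing Theorem \ref{le4.2} with Theorem \ref{le4.9}. First I would suppose, for contradiction, that $G$ is not super-$\kappa$. Combining the hypothesis $q(G)\geq 2(n-3)+\frac{4\delta+2}{n-1}$ with the Feng--Yu bound (Lemma \ref{le2.5}), namely $q(G)\leq \frac{2m}{n-1}+n-2$, I would solve for $m$ to obtain
\begin{equation*}
\frac{2m}{n-1}+n-2 \geq 2(n-3)+\frac{4\delta+2}{n-1},
\end{equation*}
which rearranges to $2m \geq (n-1)(n-4)+4\delta+2$, i.e. $m \geq \frac{1}{2}(n-2)(n-3)+2\delta$ (since $(n-1)(n-4)+2 = (n-2)(n-3)$). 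This is exactly the hypothesis of Theorem \ref{le4.9}.

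Next, by Theorem \ref{le4.9}, since $G$ is not super-$\kappa$, we must have $G \cong (K_{\delta}\vee(K_{2}\cup K_{n-\delta-2}))-e$, where $e=xy$ with $d(x)=\delta+1$ and $d(y)=n-1$ in $K_{\delta}\vee(K_{2}\cup K_{n-\delta-2})$. Then I would compute the size of this extremal graph: $m = \frac{1}{2}(n-2)(n-3)+2\delta$ exactly (the join $K_{\delta}\vee(K_2\cup K_{n-\delta-2})$ has $\binom{n}{2} - 2(n-\delta-2)$ edges by Lemma \ref{le2.2}-style counting, and removing one edge gives the stated value after simplification). Hence all the inequalities in the chain are equalities; in particular the Feng--Yu bound is attained with equality, which by the equality case of Lemma \ref{le2.5} forces $G \cong K_n$ or $G \cong K_{1,n-1}$.

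Finally I would derive the contradiction: $(K_{\delta}\vee(K_{2}\cup K_{n-\delta-2}))-e$ is neither $K_n$ (it is disconnected after deleting $\kappa=\delta$ vertices, so not complete) nor $K_{1,n-1}$ (it contains triangles, e.g. inside $K_\delta$ when $\delta\geq 2$, and for $\delta=1$ one checks the orders/degrees directly, or simply notes $n-\delta-2$ vertices of $K_{n-\delta-2}$ are mutually adjacent). This contradiction proves $G$ is super-$\kappa$. The only genuinely delicate point is verifying that the size of $(K_{\delta}\vee(K_{2}\cup K_{n-\delta-2}))-e$ equals $\frac{1}{2}(n-2)(n-3)+2\delta$ on the nose — a routine but careful edge count — and confirming this extremal graph is isomorphic to neither $K_n$ nor $K_{1,n-1}$; both are straightforward.
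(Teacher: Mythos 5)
Your proposal is correct and follows essentially the same route as the paper's proof: combine the hypothesis with the Feng--Yu bound to get $m\geq\frac{1}{2}(n-2)(n-3)+2\delta$, invoke Theorem \ref{le4.9} to pin down the extremal graph, verify its size attains the bound exactly so that equality must hold in Lemma \ref{le2.5}, and conclude a contradiction since $(K_{\delta}\vee(K_{2}\cup K_{n-\delta-2}))-e$ is neither $K_n$ nor $K_{1,n-1}$. The edge count and the algebraic identity $(n-1)(n-4)+2=(n-2)(n-3)$ both check out.
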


\begin{proof}
Suppose that $G$ is not super-$\kappa$. By assumption and Lemma \ref{le2.5},
\begin{equation}\label{2b}
2(n-3)+\frac{4\delta+2}{n-1}\leq q(G)\leq \frac{2m}{n-1}+n-2.
\end{equation}
Then we have $m\geq\frac{1}{2}(n-2)(n-3)+2\delta.$ By Theorem \ref{le4.9}, $G\cong(K_{\delta}\vee(K_{2}\cup K_{n-\delta-2}))-e$,
where $e=xy$ is an edge of $K_{\delta}\vee(K_{2}\cup K_{n-\delta-2})$ with $d(x)=\delta+1,$ $d(y)=n-1$.
Since
$$
|E(G)|=\frac{n(n-1)}{2}-2(n-\delta-2)-1=\frac{(n-2)(n-3)}{2}+2\delta,
$$
the inequalities in (\ref{2b}) should be equalities. By Lemma \ref{le2.5}, $G\cong K_{n}$ or $K_{1, n-1}$.
As $(K_{\delta}\vee(K_{2}\cup K_{n-\delta-2}))-e$ is isomorphic to neither  $K_{n}$ nor $K_{1, n-1}$, a contradiction
is obtained. Thus $G$ must be super-$\kappa$. \hspace*{\fill}$\Box$
\end{proof}

Finally, we present a sufficient condition
for a super-connected graph in terms of $q(\overline{G})$ to conclude the section.

\begin{theorem}\label{th4.11}
Let $G$ be a connected graph with $n$ vertices and minimum degree $\delta.$ If $$q(\overline{G})\leq n-\delta,$$
then $G$ is super-$\kappa$.
\end{theorem}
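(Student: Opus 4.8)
\emph{Plan.} The plan is to argue by contradiction, combining the complement-subgraph comparison from the proofs of Theorem~\ref{th4.4} and Corollary~\ref{th4.7} with the ``extremal graph forces a degree jump'' device of Theorem~\ref{th4.8}. So assume $G$ is not super-$\kappa$ and fix a minimum vertex-cut $C$ that isolates no vertex of minimum degree, with $|C|=\kappa$; note $\kappa\le\delta$. First I would observe that every component of $G-C$ has at least two vertices: if $\{v\}$ were a component then $N_G(v)\subseteq C$ gives $d_G(v)\le\kappa\le\delta$, hence $d_G(v)=\delta$, and $C$ would isolate a minimum-degree vertex, a contradiction. Let $V_0$ be a smallest component of $G-C$ and let $U$ be the union of the others, so $2\le|V_0|\le|U|$ and $|V_0|+|U|=n-\kappa$.

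Since $G$ has no edge between $V_0$ and $U$, the complete bipartite graph $K_{|V_0|,|U|}$ with parts $V_0$ and $U$ is a subgraph of $\overline{G}$. Using $q(K_{a,b})=a+b$ (seen by taking a Perron vector constant on each side of the bipartition, exactly as in the proof of Theorem~\ref{th4.4}), we obtain
$$
q(\overline{G})\ \ge\ q\bigl(K_{|V_0|,|U|}\bigr)\ =\ |V_0|+|U|\ =\ n-\kappa\ \ge\ n-\delta .
$$
The hypothesis $q(\overline{G})\le n-\delta$ then forces equality throughout, so $\kappa=\delta$ and $q(\overline{G})=n-\delta$. I would then read off the structure from this equality: $K_{|V_0|,|U|}$ is connected, so it lies inside a single component $H$ of $\overline{G}$, and $n-\delta=q(K_{|V_0|,|U|})\le q(H)\le q(\overline{G})=n-\delta$; since $Q(H)$ dominates $Q(K_{|V_0|,|U|})$ entrywise (extending by isolated vertices) and is irreducible, a Perron--Frobenius comparison forces $H=K_{|V_0|,|U|}$ exactly. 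Hence $\overline{G}$ is the disjoint union of $K_{|V_0|,|U|}$ on $V_0\cup U$ and $\overline{G}[C]$ on $C$; equivalently $G[V_0\cup U]=K_{|V_0|}\cup K_{|U|}$ and every vertex of $C$ is adjacent in $G$ to all of $V_0\cup U$, i.e. $G=G[C]\vee\bigl(K_{|V_0|}\cup K_{|U|}\bigr)$.

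It remains to contradict $\delta(G)=\delta$. In $G=G[C]\vee(K_{|V_0|}\cup K_{|U|})$ each vertex of $V_0$ has degree $|V_0|-1+\delta\ge\delta+1$, each vertex of $U$ has degree $|U|-1+\delta\ge\delta+1$, and each vertex of $C$ has degree at least $|V_0|+|U|=n-\delta$, so $\delta(G)\ge\min\{\delta+1,\,n-\delta\}>\delta$ as soon as $n\ge 2\delta+1$. I expect this concluding degree count to be the delicate step — the analogue of the degree jump $\delta(G)=\delta+1$ that eliminates the extremal graph in Theorem~\ref{th4.8} — and in particular one must be careful in the range $n\le 2\delta$, where $\overline{G}[C]$ may be sparse; so the finish should either carry a mild hypothesis on $n$ or dispose of that small-order range by a separate argument. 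The remaining steps in the plan are routine.
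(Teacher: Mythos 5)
Your proposal follows the same route as the paper's proof (place $K_{|V_0|,|U|}$ inside $\overline{G}$, get $q(\overline{G})\geq n-\kappa\geq n-\delta$, force equality, read off the structure, and contradict $\delta(G)=\delta$), but you execute two steps more carefully than the paper does: you justify $2\leq|V_0|$ (the paper merely asserts it), and — crucially — your Perron--Frobenius equality analysis correctly concludes only that the component of $\overline{G}$ on $V_0\cup U$ equals $K_{|V_0|,|U|}$ and that $\overline{G}$ splits off $\overline{G}[C]$ as a separate piece, so that $G=G[C]\vee(K_{|V_0|}\cup K_{|U|})$ with $G[C]$ \emph{arbitrary} subject to $q(\overline{G}[C])\leq n-\delta$. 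The paper instead jumps from $q(\overline{G})=n-\delta$ to ``$\overline{G}\cong K_{|V_0|,|U|}$'', i.e.\ it silently assumes the vertices of $C$ are isolated in $\overline{G}$, which does not follow; that is exactly where its degree count $\delta(G)\geq\delta+1$ comes from.

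The ``delicate step'' you flag is therefore not a defect of your write-up alone: it is a genuine gap in the theorem as stated, and the range $n\leq 2\delta$ cannot be disposed of. Concretely, take $n=10$, $\delta=6$, and let $\overline{G}=K_{2,2}\cup K_{1,3}\cup 2K_{1}$, i.e.\ $G=H\vee(K_{2}\cup K_{2})$ where $H$ is $K_{6}$ with the three edges of a star $K_{1,3}$ removed. Then $q(\overline{G})=\max\{q(K_{2,2}),q(K_{1,3})\}=\max\{4,4\}=4=n-\delta$, the star's centre has degree $4+2=6$ in $G$ so $\delta(G)=6$, one checks $\kappa(G)=6$, and $V(H)$ is a minimum vertex-cut leaving $K_{2}\cup K_{2}$ — so $G$ is not super-$\kappa$ although the hypothesis $q(\overline{G})\leq n-\delta$ holds. (The culprit is precisely that a star $K_{1,n-1-\delta}$ fits inside $\overline{G}[C]$ with $q=n-\delta$ once $n\leq 2\delta$.) Your argument, as you note, closes completely under the additional hypothesis $n\geq 2\delta+1$, since then $\delta(G)\geq\min\{\delta+1,\,n-\delta\}=\delta+1>\delta$; so your proof is correct for that range, and the statement needs such a restriction (or a separate treatment of $n\leq 2\delta$, which the counterexample shows is impossible in full generality).
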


\begin{proof}
Suppose that $G$ is not super-$\kappa$. Then $G$ has a  minimum vertex-cut $C$ with
$|C|=\kappa\leq \delta$ such that for some integer $t \ge 2$, $G-C$ has $t$ components.
Let $V_{0}, V_{1}, \ldots, V_{t-1}$ be the vertex sets of connected components of
$G-C$ with $|V_{0}|\leq|V_{1}|\leq \cdots \leq|V_{t-1}|$.
Let $U=\bigcup_{i=1}^{t-1}V_{i}.$ Then $2\leq |V_{0}|\leq |U|\leq n-\kappa-2$ and $|V_{0}|+|U|=n-\kappa$.
As $E(V_{0}, U)=\emptyset$, we conclude that $K_{|V_{0}|, |U|}$ is a subgraph of $\overline{G}$, and so
$$
q(\overline{G})\geq q(K_{|V_{0}|, |U|})=n-\kappa \geq n-\delta.
$$
By assumption, $q(\overline{G})\leq n-\delta$, and so we have $q(\overline{G})=n-\delta$ and $\overline{G}\cong K_{|V_{0}|, |U|}$.
Thus we must have $\kappa=\delta$ and $G\cong K_{\delta}\vee(K_{|V_{0}|}\cup K_{|U|})$.
Since $\delta(K_{\delta}\vee(K_{|V_{0}|}\cup K_{|U|}))\geq\delta+1>\delta$, contrary to the assumption
on the choice of $G$.
\hspace*{\fill}$\Box$
\end{proof}

\section{ Vertex-connectivity and $Q$-index of triangle-free graphs ($g\geq4$)}

\subsection{$k$-connected triangle-free graphs ($g\geq4$)}

Hong et al. obtained a sufficient condition on size $m$ to warrant $k$-connected graphs,
in which the lower  bound on the graph size is a special case of Lemma \ref{le3.1} when $g\geq4$.
This can, once again, be applied to obtain results relating the $Q$-index and the connectivity
in a connected triangle-free graph.

\begin{theorem} (Hong, Xia, Chen and Volkmann \cite{HXC})\label{le5.1}
Let $G$ be a connected triangle-free graph of order $n$, size $m$ and minimum degree
$\delta\geq k\geq2.$ If $$m\geq\delta^{2}+\lfloor\frac{1}{4}(n-2\delta+k-1)^{2}\rfloor,$$ then
$G$ is $k$-connected unless $V(G)=X\cup C\cup Y$, and $C$ is
a minimum vertex-cut of $G$ with $G[C]\cong\overline{K_{k-1}}$, $G[X\cup C]\cong K_{\delta, \delta}$ and
$G[Y\cup C]\cong K_{\lfloor\frac{n-2\delta+k-1}{2}\rfloor, \lceil\frac{n-2\delta+k-1}{2}\rceil}$.
\end{theorem}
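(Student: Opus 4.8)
The plan is to establish the contrapositive-flavoured form of the statement: assuming $G$ is connected and triangle-free with $\delta(G)=\delta\ge k\ge2$, \emph{not} $k$-connected, and $m\ge\delta^{2}+\lfloor\frac14(n-2\delta+k-1)^{2}\rfloor$, I will force $G$ to be the graph described. So fix a minimum vertex-cut $C$, write $\kappa=|C|$, and note $\kappa\le k-1<\delta$. Let $V_{0},V_{1},\dots,V_{s-1}$ ($s\ge2$) be the vertex sets of the components of $G-C$, indexed so that $|V_{0}|\le|V_{1}|\le\cdots$, put $U=\bigcup_{i\ge1}V_{i}$, and set $a=|V_{0}|$ and $b=|U|=n-\kappa-a$, so $a\le b$.

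The first step is the component lower bound $a\ge 2\delta-\kappa$. Since $d(v)\ge\delta>\kappa=|C|$ for $v\in V_{0}$, the vertex $v$ has a neighbour $w$ in its own component $V_{0}$; as $G$ is triangle-free, $N(v)\cap N(w)=\emptyset$, while $N(v)\cup N(w)\subseteq V_{0}\cup C$ and $|N(v)\cup N(w)|=d(v)+d(w)\ge2\delta$, so $|V_{0}\cup C|\ge2\delta$, i.e., $a\ge2\delta-\kappa$. (The same follows from Lemma \ref{le2.10}, as $\nu(\delta,g,\kappa)\ge2\delta-\kappa$ for $g\ge4$.) The bound applies to every component, so $n=\kappa+a+b\ge4\delta-\kappa$, which in particular guarantees $2\delta-\kappa\le\frac{n-\kappa}{2}\le b$; thus $a=2\delta-\kappa$ lies in the feasible range of $a$, and $b\ge2\delta-\kappa$ is automatic.

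Next I bound $m$. Since no edge joins $V_{0}$ to $U$, we have $m=|E(G[V_{0}\cup C])|+|E(G[U\cup C])|-|E(G[C])|\le|E(G[V_{0}\cup C])|+|E(G[U\cup C])|$, and applying Mantel's theorem (Theorem \ref{le2.13}) to the triangle-free graphs $G[V_{0}\cup C]$ and $G[U\cup C]$ gives $m\le\lfloor\frac{(a+\kappa)^{2}}{4}\rfloor+\lfloor\frac{(n-a)^{2}}{4}\rfloor=:h(a)$. Writing $\lfloor x^{2}/4\rfloor=\frac14(x^{2}-r(x))$ with $r(x)\in\{0,1\}$ the parity of $x$, one has $h(a)=\frac14\bigl[(a+\kappa)^{2}+(n-a)^{2}-r(a+\kappa)-r(n-a)\bigr]$; the quadratic $(a+\kappa)^{2}+(n-a)^{2}$ is an upward parabola in $a$ with vertex $\frac{n-\kappa}{2}$, which dominates the feasible range, hence it is strictly decreasing there and maximal at $a=2\delta-\kappa$, where moreover $a+\kappa=2\delta$ is even so the floor loss is minimal. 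A short case check on the parity of $n+\kappa$ then shows $h$ attains its maximum \emph{uniquely} at $a=2\delta-\kappa$, with value $\delta^{2}+\lfloor\frac{(n-2\delta+\kappa)^{2}}{4}\rfloor$; since $\kappa\le k-1$ and $n-2\delta+\kappa\ge2\delta\ge4$, this is at most $\delta^{2}+\lfloor\frac{(n-2\delta+k-1)^{2}}{4}\rfloor$, with equality only if $\kappa=k-1$.

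Finally, the hypothesis on $m$ forces equality throughout: $E(G[C])=\emptyset$ so $G[C]\cong\overline{K_{\kappa}}$; $\kappa=k-1$; $a=|V_{0}|=2\delta-k+1$ and $b=|U|=n-2\delta$; $G[V_{0}\cup C]$ meets Mantel equality on $2\delta$ vertices, whence $G[V_{0}\cup C]\cong K_{\delta,\delta}$; and $G[U\cup C]$ meets Mantel equality on $n-2\delta+k-1$ vertices, whence $G[U\cup C]\cong K_{\lfloor(n-2\delta+k-1)/2\rfloor,\lceil(n-2\delta+k-1)/2\rceil}$. Taking $X=V_{0}$ and $Y=U$ yields exactly $V(G)=X\cup C\cup Y$ with $C$ a minimum vertex-cut and the three induced subgraphs as claimed (and one checks in passing that this graph genuinely has minimum degree $\delta$, attained on $X$, and connectivity $k-1$, using $n\ge4\delta-k+1$). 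I expect the main obstacle to be the uniqueness-of-the-maximizer step for $h$: because of the floor functions one cannot merely invoke strict convexity, and a careful though elementary parity bookkeeping is needed to exclude configurations with $a>2\delta-\kappa$ and with $\kappa<k-1$; once that is in place, identifying the Mantel equality cases as precisely $K_{\delta,\delta}$ and the balanced complete bipartite graph is immediate from Theorem \ref{le2.13}.
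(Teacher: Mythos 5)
Your argument is correct, but note that the paper does not prove Theorem \ref{le5.1} at all: it is imported verbatim from Hong, Xia, Chen and Volkmann \cite{HXC}, so there is no in-paper proof to match. What you have written is a sound reconstruction, and it coincides with the strategy the paper itself uses for the closely related Theorem \ref{th5.3}: the same decomposition $V(G)=V_{0}\cup C\cup U$, the same lower bound $|V_{0}|\ge 2\delta-\kappa$ (which is exactly $\nu(\delta,4,\kappa)$ from Lemma \ref{le2.10}; your direct triangle-free argument via $N(v)\cap N(w)=\emptyset$ is a correct standalone substitute), and Mantel's theorem (Theorem \ref{le2.13}) applied to $G[V_{0}\cup C]$ and $G[U\cup C]$. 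The one place where your route is genuinely different is the maximization step you flag as the main obstacle. Your discrete monotonicity argument does go through -- using $\lfloor p^{2}/4\rfloor-\lfloor(p-1)^{2}/4\rfloor=\lfloor p/2\rfloor$, the step $a\mapsto a-1$ changes $h$ by $\lfloor(q+1)/2\rfloor-\lfloor p/2\rfloor\ge 0$, with equality only if $p=q$ is even, which cannot occur at $a=2\delta-\kappa+1$ since $p=2\delta+1$ is odd -- but the paper's proof of Theorem \ref{th5.3} sidesteps all parity bookkeeping via the exact identity
\begin{equation*}
\Bigl\lfloor\frac{(|V_{0}|+|C|)^{2}}{4}\Bigr\rfloor+\Bigl\lfloor\frac{(|U|+|C|)^{2}}{4}\Bigr\rfloor
=\Bigl\lfloor\frac{n^{2}+\kappa^{2}}{4}-\frac{|V_{0}|\cdot|U|}{2}\Bigr\rfloor,
\end{equation*}
after which one only needs that $|V_{0}|\cdot|U|$ is minimized at the endpoint $|V_{0}|=2\delta-\kappa$, and the value collapses to $\delta^{2}+\lfloor\frac{(n-2\delta+\kappa)^{2}}{4}\rfloor$ by a one-line expansion. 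You may prefer that identity; otherwise your equality analysis (forcing $\kappa=k-1$, $E(G[C])=\emptyset$, and the two Mantel extremal graphs $K_{\delta,\delta}$ and $K_{\lfloor(n-2\delta+k-1)/2\rfloor,\lceil(n-2\delta+k-1)/2\rceil}$) is exactly what is needed and is carried out correctly.
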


\begin{corollary}\label{th5.2}
Let $G$ be a connected triangle-free graph of order $n$ and minimum degree $\delta\geq k\geq2.$ If
$$
q(G)\geq n+k-2\delta-2+\frac{2\delta^{2}}{n-1}+\lfloor\frac{1}{2}(n-1+\frac{(k-2\delta)^{2}}{n-1})\rfloor,
$$
then
$G$ is $k$-connected.
\end{corollary}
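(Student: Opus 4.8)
The plan is to combine the size condition from Theorem~\ref{le5.1} with the standard bound in Lemma~\ref{le2.5} exactly as was done for Corollary~\ref{th4.3} in the non-triangle-free setting. First I would argue by contradiction: suppose $G$ is not $k$-connected. Since $G$ is a connected triangle-free graph with $\delta \geq k$, I would invoke Lemma~\ref{le2.5} to get $q(G) \leq \frac{2m}{n-1} + n - 2$, and combine this with the hypothesized lower bound on $q(G)$ to derive a lower bound on $m$. Concretely, the inequality $n+k-2\delta-2+\frac{2\delta^{2}}{n-1}+\lfloor\frac{1}{2}(n-1+\frac{(k-2\delta)^{2}}{n-1})\rfloor \leq \frac{2m}{n-1}+n-2$ rearranges (after multiplying through by $\frac{n-1}{2}$ and simplifying the floor term, noting $n-1+\frac{(k-2\delta)^2}{n-1} = \frac{(n-1)^2+(k-2\delta)^2}{n-1}$ and that $\lfloor x \rfloor \geq$ the relevant integer expression) to $m \geq \delta^{2}+\lfloor\frac{1}{4}(n-2\delta+k-1)^{2}\rfloor$. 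The arithmetic here is the one genuine computation: I would need to check that $\frac{1}{2}(n-1)\cdot\lfloor\frac{1}{2}\cdot\frac{(n-1)^2+(k-2\delta)^2}{n-1}\rfloor$ reproduces $\lfloor\frac{1}{4}(n-2\delta+k-1)^2\rfloor$, using $(n-2\delta+k-1)^2 = (n-1)^2 - 2(n-1)(2\delta-k) + (2\delta-k)^2$ and tracking the floor carefully.

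Once the size bound $m \geq \delta^{2}+\lfloor\frac{1}{4}(n-2\delta+k-1)^{2}\rfloor$ is established, Theorem~\ref{le5.1} forces $G$ to be the exceptional graph: $V(G)=X\cup C\cup Y$ with $G[C]\cong\overline{K_{k-1}}$, $G[X\cup C]\cong K_{\delta,\delta}$, and $G[Y\cup C]\cong K_{\lfloor\frac{n-2\delta+k-1}{2}\rfloor,\lceil\frac{n-2\delta+k-1}{2}\rceil}$. For this graph the size is exactly $|E(G)| = \delta^{2}+\lfloor\frac{1}{4}(n-2\delta+k-1)^{2}\rfloor$, so the chain of inequalities collapses to equalities; in particular $q(G) = \frac{2m}{n-1}+n-2$, and by the equality case of Lemma~\ref{le2.5} this forces $G \cong K_{n}$ or $G \cong K_{1,n-1}$. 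Neither of these is triangle-free-and-equal-to-the-exceptional-graph for the relevant parameter ranges ($K_n$ contains triangles once $n \geq 3$, and $K_{1,n-1}$ has $\delta = 1 < k$), a contradiction. Hence $G$ must be $k$-connected.

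The main obstacle I anticipate is not conceptual but bookkeeping: verifying that the floor expression in the stated bound on $q(G)$ translates precisely into the floor expression $\lfloor\frac{1}{4}(n-2\delta+k-1)^{2}\rfloor$ in Theorem~\ref{le5.1}, since floors do not commute cleanly with multiplication by $\frac{n-1}{2}$. One safe route is to avoid the floor in the intermediate step entirely: use that $q(G)$ exceeds a real quantity slightly larger than the floor, deduce $m > \delta^2 + \frac{1}{4}(n-2\delta+k-1)^2 - 1$, and then use integrality of $m$ together with the fact that $\delta^2$ is an integer to conclude $m \geq \delta^2 + \lfloor\frac{1}{4}(n-2\delta+k-1)^2\rfloor$. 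A secondary point to handle with a line of justification is ruling out $G \cong K_n$ and $G \cong K_{1,n-1}$ at the end; this is immediate from triangle-freeness and $\delta \geq k \geq 2$, but should be stated explicitly as in the proof of Corollary~\ref{th4.3}.
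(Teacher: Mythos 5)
Your proposal reproduces the paper's proof of Corollary \ref{th5.2} step for step: argue by contradiction, use Lemma \ref{le2.5} to convert the hypothesis on $q(G)$ into the size bound of Theorem \ref{le5.1}, let that theorem force the exceptional graph, and then use the equality case of Lemma \ref{le2.5} to rule it out since it is neither $K_n$ nor $K_{1,n-1}$. The floor-arithmetic point you flag is the one place where the paper is silent (it simply asserts $m\geq\delta^{2}+\lfloor\frac{1}{4}(n-2\delta+k-1)^{2}\rfloor$), and be aware that your proposed patch is not quite enough as written: replacing $\lfloor x\rfloor$ by $x-1$ before multiplying through by $\frac{n-1}{2}$ costs $\frac{n-1}{2}$ rather than $1$ in the bound on $m$, so that verification still needs genuine care.
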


\begin{proof}
Suppose that $G$ is not $k$-connected. By assumption and Lemma \ref{le2.5}, we have
\begin{equation}\label{3m}
n+k-2\delta-2+\frac{2\delta^{2}}{n-1}+\lfloor\frac{1}{2}(n-1+\frac{(k-2\delta)^{2}}{n-1})\rfloor \leq q(G)\leq\frac{2m}{n-1}+n-2.
\end{equation}
Then $m\geq\delta^{2}+\lfloor\frac{1}{4}(n-2\delta+k-1)^{2}\rfloor.$
By Theorem \ref{le5.1}, $V(G)=X\cup C\cup Y$, and $C$ is
a minimum vertex-cut of $G$ with $G[C]\cong\overline{K_{k-1}}$, $G[X\cup C]\cong K_{\delta, \delta}$ and
$G[Y\cup C]\cong K_{\lfloor\frac{n-2\delta+k-1}{2}\rfloor, \lceil\frac{n-2\delta+k-1}{2}\rceil}.$
Since $$|E(G)|=\delta^{2}+\lfloor\frac{1}{4}(n-2\delta+k-1)^{2}\rfloor,$$
the inequalities in (\ref{3m}) must be equalities. By Lemma \ref{le2.5}, $G\cong K_{n}$ or $K_{1, n-1}.$
However, $G$ is isomorphic to neither $K_{n}$ nor $K_{1, n-1}$, a contradiction.
\hspace*{\fill}$\Box$
\end{proof}

Finally, we present a sufficient condition for a $k$-connected triangle-free graph in terms of $q(\overline{G})$ to conclude the section.
\begin{theorem}\label{th5.3}
Let $G$ be a connected triangle-free graph of order $n$ and minimum degree $\delta\geq k\geq2,$ and $\overline{G}$ be connected.
If
\begin{equation} \label{3aa}
q(\overline{G})\leq 2(n-1)-\frac{4\delta^{2}}{n}-\lfloor\frac{(n-2\delta+k-1)^{2}}{n}\rfloor,
\end{equation}
then $G$ is $k$-connected.
\end{theorem}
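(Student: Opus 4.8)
The plan is to mimic the proof of Corollary~\ref{th5.2}, but now transferring the information about the size of $G$ to the complement $\overline{G}$ via Lemma~\ref{le2.1} rather than Lemma~\ref{le2.5}. First I would argue by contradiction: suppose $\kappa(G)\le k-1$. Since $G$ is triangle-free with $\delta\ge k$ and girth $g\ge 4$, the structural analysis underlying Theorem~\ref{le5.1} shows that if (\ref{3aa}) fails to prevent $k$-connectivity, then $m(G)<\delta^{2}+\lfloor\frac14(n-2\delta+k-1)^{2}\rfloor$; more precisely, either $G$ is not of the exceptional form and then $m(G)$ is strictly below that bound, or $G$ is exactly the exceptional graph and we must check it separately. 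I would record the strict inequality
\[
m(G) < \delta^{2}+\left\lfloor\tfrac14(n-2\delta+k-1)^{2}\right\rfloor
\]
(with equality possible only for the exceptional graph), the point being that the exceptional graph has a $(k-1)$-vertex-cut that is \emph{independent}, and we shall handle it at the end.

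Next I would pass to the complement. From $m(G)+m(\overline{G})=\binom{n}{2}$ we get
\[
m(\overline{G}) > \frac{n(n-1)}{2}-\delta^{2}-\left\lfloor\tfrac14(n-2\delta+k-1)^{2}\right\rfloor .
\]
Applying Lemma~\ref{le2.1} to $\overline{G}$ (which is connected by hypothesis, though we only need the inequality $q(\overline{G})\ge \frac{4m(\overline{G})}{n}$, valid for every graph) yields
\[
q(\overline{G}) \ge \frac{4m(\overline{G})}{n} > 2(n-1)-\frac{4\delta^{2}}{n}-\frac{4}{n}\left\lfloor\tfrac14(n-2\delta+k-1)^{2}\right\rfloor .
\]
Here I would use the elementary fact that $\frac{4}{n}\lfloor\frac14 x^{2}\rfloor \le \frac{1}{n}\lfloor\frac{x^{2}}{n}\cdot n\rfloor$ is not quite what is needed; rather one checks directly that $4\lfloor\frac14(n-2\delta+k-1)^{2}\rfloor \le (n-2\delta+k-1)^{2}$ and hence $\frac{4}{n}\lfloor\frac14(n-2\delta+k-1)^{2}\rfloor \le \frac{(n-2\delta+k-1)^{2}}{n}$, and also that $\lfloor\frac{(n-2\delta+k-1)^{2}}{n}\rfloor \le \frac{(n-2\delta+k-1)^{2}}{n}$, so the displayed lower bound on $q(\overline{G})$ exceeds the right-hand side of (\ref{3aa}). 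This contradicts the hypothesis (\ref{3aa}) and forces $G$ to be $k$-connected.

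The main obstacle I expect is the floor-function bookkeeping: the bound in Theorem~\ref{le5.1} carries a floor, the bound in (\ref{3aa}) carries a floor, and these two floors sit on opposite sides of the $m(G)\leftrightarrow m(\overline{G})$ transfer, so one must be careful that the strict inequality survives the rounding in both directions. I would handle this by keeping the strict inequality $m(\overline{G}) > \binom{n}{2}-\delta^{2}-\lfloor\frac14(n-2\delta+k-1)^{2}\rfloor$ and noting that $\frac{4}{n}\cdot\left(\binom{n}{2}-\delta^{2}-\lfloor\frac14(n-2\delta+k-1)^{2}\rfloor\right) \ge 2(n-1)-\frac{4\delta^{2}}{n}-\lfloor\frac{(n-2\delta+k-1)^{2}}{n}\rfloor$, which reduces to the inequality $\frac{4}{n}\lfloor\frac14 x^{2}\rfloor \le \lfloor\frac{x^{2}}{n}\rfloor$ with $x=n-2\delta+k-1$; this I would verify by writing $x^{2}=4q+r$ with $r\in\{0,1,2,3\}$ and comparing. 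The remaining loose end is the exceptional graph of Theorem~\ref{le5.1}: for it, $m(G)$ equals the threshold, so the inequality above becomes non-strict, but in that case $\overline{G}$ contains $K_{k-1}\vee(\text{something})$ — concretely $\overline{G}[C]\cong K_{k-1}$ — and one computes $q(\overline{G})$ directly, or observes that $\overline{G}\supseteq K_{|X|,|Y|}$-type structure forcing $q(\overline{G})$ strictly above the bound in (\ref{3aa}); either way the exceptional graph violates (\ref{3aa}), so it is excluded and the proof is complete.
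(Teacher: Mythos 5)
Your overall route is the same as the paper's: assume $\kappa(G)\le k-1$, bound $m(G)$ by $\delta^{2}+\lfloor\frac14(n-2\delta+k-1)^{2}\rfloor$ using the triangle-free structure (you import this from Theorem~\ref{le5.1}; the paper re-derives it from Lemma~\ref{le2.10} and Mantel's theorem), pass to $m(\overline{G})$, apply Lemma~\ref{le2.1}, and dispose of the extremal configuration separately. One secondary weakness: the paper handles the extremal case by noting that equality throughout forces $\overline{G}$ to be regular (the equality condition in Lemma~\ref{le2.1}) while the exceptional graph is not regular; your ``compute $q(\overline{G})$ directly, or observe\ldots'' is a placeholder rather than an argument, and you would need to supply the actual computation or adopt the regularity device.

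The genuine problem is exactly the floor-function step you yourself flag as the main obstacle: the inequality $\frac{4}{n}\lfloor\frac14 x^{2}\rfloor\le\lfloor\frac{x^{2}}{n}\rfloor$ (with $x=n-2\delta+k-1$) that your chain needs is false in general. Writing $4\lfloor x^{2}/4\rfloor=x^{2}-(x^{2}\bmod 4)$ and $n\lfloor x^{2}/n\rfloor=x^{2}-(x^{2}\bmod n)$, the inequality is equivalent to $x^{2}\bmod 4\ \ge\ x^{2}\bmod n$, which fails for most $x$ once $n>4$; for instance $n=10$, $\delta=k=2$, $x=7$ gives $\frac{4}{10}\lfloor\frac{49}{4}\rfloor=4.8$ versus $\lfloor\frac{49}{10}\rfloor=4$. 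The extra $\frac{4}{n}$ of slack you gain from the strict inequality and the integrality of $m(\overline{G})$ does not repair this (in the same example the needed comparison becomes $44\ge 48$). So the passage from $q(\overline{G})>2(n-1)-\frac{4\delta^{2}}{n}-\frac{4}{n}\lfloor\frac14 x^{2}\rfloor$ to a contradiction with (\ref{3aa}) does not go through, and the verification you propose ``by writing $x^{2}=4q+r$ and comparing'' will expose the failure rather than confirm the bound. For what it is worth, the paper's own proof asserts at the corresponding point that $\frac{4}{n}\bigl[\cdots-\lfloor\frac{x^{2}}{4}\rfloor\bigr]$ \emph{equals} $\cdots-\lfloor\frac{x^{2}}{n}\rfloor$, which suffers from the same defect; the hypothesis (\ref{3aa}) would need to carry $\frac{4}{n}\lfloor\frac{x^{2}}{4}\rfloor$ in place of $\lfloor\frac{x^{2}}{n}\rfloor$ for either argument to close.
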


\begin{proof}
By contradiction, assume that $G$ has a minimum vertex-cut $C$ with $|C|=\kappa(G) \leq k-1<\delta$.
Let $V_{0}, V_{1}, \ldots, V_{t-1}$, for some integer $t \ge 2$, be the vertex sets of connected components of
$G-C$ satisfying $|V_{0}|\leq|V_{1}|\leq \cdots \leq|V_{t-1}|$.
By Lemma \ref{le2.10} with $g\geq4$, we have, for any $i$ with $0\leq i\leq t-1$,
$$
|V_{i}|\geq \nu(\delta, g, \kappa)\geq\nu(\delta, 4, \kappa)=2\delta-\kappa.
$$
Let $U=\bigcup_{i=1}^{t-1}V_{i}$. Then $2\delta-\kappa\leq |V_{0}|\leq |U|\leq n-2\delta$ and $|V_{0}|+|U|=n-\kappa$.

By Theorem \ref{le2.13}, with similar analysis of Theorem 5.2 in \cite{HXC}, we have
\begin{eqnarray*}
  m(G) &=& |E(G[V_{0}\cup C])|+|E(G[U\cup C])|-|E(G[C])|\\
      &\leq& \lfloor\frac{(|V_{0}|+|C|)^{2}}{4}\rfloor+\lfloor\frac{(|U|+|C|)^{2}}{4}\rfloor-|E(G[C])|\\
      &\leq& \lfloor\frac{(|V_{0}|+|C|)^{2}}{4}\rfloor+\lfloor\frac{(|U|+|C|)^{2}}{4}\rfloor\\
       &=& \lfloor\frac{(|V_{0}|+|U|+|C|)^{2}+|C|^{2}}{4}-\frac{|V_{0}|\cdot|U|}{2}\rfloor\\
       &=& \lfloor\frac{n^{2}+\kappa^{2}}{4}-\frac{|V_{0}|\cdot|U|}{2}\rfloor\leq \lfloor\frac{n^{2}+\kappa^{2}}{4}-\frac{(2\delta-\kappa)\cdot(n-2\delta)}{2}\rfloor\\
       &=& \delta^{2}+\lfloor\frac{(n-2\delta+\kappa)^{2}}{4}\rfloor
      \leq \delta^{2}+\lfloor\frac{(n-2\delta+k-1)^{2}}{4}\rfloor.
\end{eqnarray*}
Since $m(G)+m(\overline{G})=\frac{n(n-1)}{2},$ then
\begin{equation} \label{3a}
m(\overline{G}) = \frac{n(n-1)}{2}-m(G)
              \ge  \frac{n(n-1)}{2}-\delta^{2}-\lfloor\frac{(n-2\delta+k-1)^{2}}{4}\rfloor.
\end{equation}
By Lemma \ref{le2.1},
\begin{equation} \label{3b}
q(\overline{G}) \ge  \frac{4m(\overline{G})}{n}
              \ge \frac{4}{n}[\frac{n(n-1)}{2}-\delta^{2}-\lfloor\frac{(n-2\delta+k-1)^{2}}{4}\rfloor]
              = 2(n-1)-\frac{4\delta^{2}}{n}-\lfloor\frac{(n-2\delta+k-1)^{2}}{n}\rfloor.
\end{equation}
Combine (\ref{3aa}) and (\ref{3b}) to get
$q(\overline{G})=2(n-1)-\frac{4\delta^{2}}{n}-\lfloor\frac{(n-2\delta+k-1)^{2}}{n}\rfloor.$
Then all the inequalities in (\ref{3a}) and (\ref{3b}) must be equalities.
It follows that $|C|=\kappa=k-1,$ $|V_{0}|=2\delta-k+1,$ $|U|=n-2\delta,$ $|E(G[C])|=0,$
$|E(G[V_{0}\cup C])|=\delta^{2},$  $|E(G[U\cup C])|=\lfloor\frac{(n-2\delta+k-1)^{2}}{4}\rfloor$, $G[V_{0}\cup C]=K_{\lfloor \frac{(|V_{0}|+|C|)}{2}\rfloor, \lceil \frac{(|V_{0}|+|C|)}{2}\rceil}$, $G[U\cup C]=K_{\lfloor \frac{(|U|+|C|)}{2}\rfloor, \lceil \frac{(|U|+|C|)}{2}\rceil}$ and $\overline{G}$ is regular.
Therefore, $G[C]=\overline{K_{k-1}}$, $G[V_{0}\cup C]=K_{\delta, \delta}$ and $G[U\cup C]=K_{\lfloor\frac{n-2\delta+k-1}{2}\rfloor, \lceil\frac{n-2\delta+k-1}{2}\rceil}$. However, $G$ is not regular, and so $\overline{G}$ cannot be regular, a contradiction.
\hspace*{\fill}$\Box$
\end{proof}

\subsection{Maximally connected triangle-free graphs ($g\geq4$)}
Naturally, by setting $k=\delta$ in Corollary \ref{th5.2} and Theorem \ref{th5.3},
we can obtain the following results on maximally connected triangle-free graphs.

\begin{corollary}\label{th5.4}
Let $G$ be a connected triangle-free graph of order $n$ and minimum degree $\delta\geq2.$ If $$q(G)\geq n-\delta-2+\frac{2\delta^{2}}{n-1}+\lfloor\frac{1}{2}(n-1+\frac{\delta^{2}}{n-1})\rfloor,$$ then
$G$ is maximally connected.
\end{corollary}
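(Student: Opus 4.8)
The plan is to obtain Corollary~\ref{th5.4} as the instance $k=\delta$ of Corollary~\ref{th5.2}. First I would invoke Whitney's inequality $\kappa(G)\le\delta(G)$, valid for every graph: it shows that to conclude $G$ is maximally connected (i.e.\ $\kappa(G)=\delta(G)$) it suffices to prove $\kappa(G)\ge\delta$, that is, that $G$ is $\delta$-connected. Since $\delta\ge2$, the pair $(k,\delta)=(\delta,\delta)$ satisfies the hypothesis $\delta\ge k\ge2$ of Corollary~\ref{th5.2}, so that corollary may legitimately be applied with $k=\delta$.

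Second, I would check that the hypothesis of Corollary~\ref{th5.4} is literally the hypothesis of Corollary~\ref{th5.2} at $k=\delta$. Substituting $k=\delta$ into $n+k-2\delta-2$ gives $n-\delta-2$; the term $\frac{2\delta^{2}}{n-1}$ is untouched; and inside the floor $(k-2\delta)^{2}$ becomes $(-\delta)^{2}=\delta^{2}$, so $\lfloor\frac{1}{2}(n-1+\frac{(k-2\delta)^{2}}{n-1})\rfloor$ becomes $\lfloor\frac{1}{2}(n-1+\frac{\delta^{2}}{n-1})\rfloor$. Hence the assumed bound $q(G)\ge n-\delta-2+\frac{2\delta^{2}}{n-1}+\lfloor\frac{1}{2}(n-1+\frac{\delta^{2}}{n-1})\rfloor$ is exactly the hypothesis of Corollary~\ref{th5.2} with $k=\delta$. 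Applying Corollary~\ref{th5.2}, $G$ is $\delta$-connected, and combined with Whitney's bound this forces $\kappa(G)=\delta(G)$, proving the claim.

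The reduction is purely formal, so there is no genuine obstacle; the only thing worth a second look is that the machinery underlying Corollary~\ref{th5.2} --- namely Lemma~\ref{le2.10} with $g\ge4$, Theorem~\ref{le5.1}, and Lemma~\ref{le2.5} --- never secretly needs $k<\delta$ but only $k\le\delta$. Inspecting those statements confirms that the configuration used under the contradiction hypothesis is $\kappa(G)\le k-1<\delta$, which is precisely what ``not $\delta$-connected'' gives when $k=\delta$, so nothing breaks. Alternatively, one could rerun the proof of Theorem~\ref{th5.3} (or of Corollary~\ref{th5.2}) verbatim with $k$ replaced by $\delta$, but quoting Corollary~\ref{th5.2} directly is cleaner.
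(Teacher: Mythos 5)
Your proposal is correct and matches the paper exactly: the paper derives Corollary~\ref{th5.4} by setting $k=\delta$ in Corollary~\ref{th5.2}, which is precisely your reduction (including the observation that $\delta$-connectedness together with Whitney's bound $\kappa\le\delta$ yields $\kappa=\delta$). Your extra check that the substitution $k=\delta$ is legitimate in the underlying machinery is a welcome bit of diligence but introduces nothing beyond the paper's one-line argument.
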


\begin{corollary}\label{th5.5}
Let $G$ be a connected triangle-free graph of order $n$ and minimum degree $\delta\geq2,$ and $\overline{G}$ be connected.
If$$q(\overline{G})\leq 2(n-1)-\frac{4\delta^{2}}{n}-\lfloor\frac{(n-\delta-1)^{2}}{n}\rfloor,$$
then $G$ is maximally connected.
\end{corollary}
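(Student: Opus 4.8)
The plan is to derive Corollary~\ref{th5.5} as the special case $k=\delta$ of Theorem~\ref{th5.3}. Under the hypothesis $\delta\geq2$, a connected triangle-free graph $G$ fails to be maximally connected precisely when $\kappa(G)\leq\delta-1$, which is the hypothesis $\kappa(G)\leq k-1$ with $k=\delta$ that drives the argument of Theorem~\ref{th5.3}. So the first step is simply to observe that maximal connectivity is equivalent to being $\delta$-connected here, and then to invoke Theorem~\ref{th5.3} with $k$ replaced by $\delta$.

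Next I would substitute $k=\delta$ into the bound of Theorem~\ref{th5.3} and simplify. The quantity $n-2\delta+k-1$ becomes $n-2\delta+\delta-1 = n-\delta-1$, so the floor term $\lfloor\frac{(n-2\delta+k-1)^2}{n}\rfloor$ becomes $\lfloor\frac{(n-\delta-1)^2}{n}\rfloor$, while the term $\frac{4\delta^2}{n}$ is unchanged. Thus the condition
\[
q(\overline{G})\leq 2(n-1)-\frac{4\delta^{2}}{n}-\left\lfloor\frac{(n-\delta-1)^{2}}{n}\right\rfloor
\]
is exactly inequality~(\ref{3aa}) specialized to $k=\delta$. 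Applying Theorem~\ref{th5.3}, $G$ is $\delta$-connected, hence maximally connected.

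There is essentially no obstacle here: the only thing to be careful about is confirming that the hypotheses of Theorem~\ref{th5.3} are met under the assumptions of Corollary~\ref{th5.5}, namely that $G$ is connected triangle-free with $\delta\geq k\geq2$ (here $k=\delta\geq2$, so $\delta\geq k\geq2$ holds trivially) and that $\overline{G}$ is connected (assumed). One might also remark — as the proof of Theorem~\ref{th5.3} itself does — that the extremal configuration is ruled out because $G$ would then be forced to be non-regular while $\overline{G}$ would be forced regular, so strict inequality is not actually needed; but for the corollary it suffices to quote Theorem~\ref{th5.3} as a black box.
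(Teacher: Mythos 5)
Your proposal is correct and matches the paper's approach exactly: the paper obtains Corollary~\ref{th5.5} precisely by setting $k=\delta$ in Theorem~\ref{th5.3}, noting that $n-2\delta+k-1$ becomes $n-\delta-1$ and that being $\delta$-connected is equivalent to being maximally connected since $\kappa(G)\leq\delta(G)$ always holds.
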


\subsection{Super-connected triangle-free graphs ($g\geq4$)}

We start quoting a theorem by Hong et al. \cite{HXC} again, to be applied in one of our results.

\begin{theorem} (Hong, Xia, Chen and Volkmann \cite{HXC})\label{le5.6}
Let $G$ be a connected triangle-free graph of order $n$, size $m$ and minimum degree $\delta\geq 2.$ If $$m\geq\delta^{2}+\lfloor\frac{1}{4}(n-\delta)^{2}\rfloor,$$ then
$G$ is super-$\kappa$.
\end{theorem}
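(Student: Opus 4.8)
The plan is to argue by contradiction, in the spirit of the proof of Theorem~\ref{th5.3}, using Mantel's theorem (Theorem~\ref{le2.13}) in place of the Tur\'{a}n-number bound of Lemma~\ref{le2.11}. Suppose $G$ is triangle-free with $\delta(G)=\delta\ge2$ and $m(G)\ge\delta^{2}+\lfloor\frac14(n-\delta)^{2}\rfloor$, but $G$ is not super-$\kappa$. Then $G$ has a minimum vertex-cut $C$, with $|C|=\kappa\le\delta$, that does not isolate a vertex of minimum degree. I would first observe that \emph{no} component of $G-C$ is a single vertex: if $\{v\}$ were one, then $\delta\le d_{G}(v)\le|C|=\kappa\le\delta$, so $d_{G}(v)=\delta$ and $C$ would isolate the minimum-degree vertex $v$, a contradiction. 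Writing $V_{0},V_{1},\dots,V_{t-1}$ ($t\ge2$) for the vertex sets of the components of $G-C$ with $|V_{0}|\le\cdots\le|V_{t-1}|$ and $U=\bigcup_{i\ge1}V_{i}$, each $V_{i}$ then has at least two vertices, hence (being connected) contains an edge $u_{i}v_{i}$. Since $G$ is triangle-free, $u_{i}$ and $v_{i}$ have no common neighbour, so $N(u_{i})\cup N(v_{i})\subseteq V_{i}\cup C$ has size $d(u_{i})+d(v_{i})\ge2\delta$; thus $|V_{i}|\ge2\delta-\kappa$ for every $i$. (Lemma~\ref{le2.10} cannot be invoked here, since $\kappa$ may equal $\delta$.) Therefore $2\delta-\kappa\le|V_{0}|\le|U|\le n-2\delta$, $|V_{0}|+|U|=n-\kappa$, and in particular $n\ge4\delta-\kappa\ge3\delta$.

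Next I would bound $m(G)$. Every edge of $G$ lies in $G[V_{0}\cup C]$ or in $G[U\cup C]$, so $m(G)=|E(G[V_{0}\cup C])|+|E(G[U\cup C])|-|E(G[C])|$, and Mantel's theorem gives $|E(G[V_{0}\cup C])|\le\lfloor\frac14(|V_{0}|+\kappa)^{2}\rfloor$ and similarly for $U$. Using the identity $(|V_{0}|+\kappa)^{2}+(|U|+\kappa)^{2}=n^{2}+\kappa^{2}-2|V_{0}|\,|U|$ together with $\lfloor x\rfloor+\lfloor y\rfloor\le\lfloor x+y\rfloor$, this yields $m(G)\le\lfloor\frac14(n^{2}+\kappa^{2}-2|V_{0}|\,|U|)\rfloor$. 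Concavity of $x\mapsto x(n-\kappa-x)$ on the interval $[2\delta-\kappa,\,n-2\delta]$ gives $|V_{0}|\,|U|\ge(2\delta-\kappa)(n-2\delta)$, and a routine computation verifies
\[
\delta^{2}+\tfrac14(n-\delta)^{2}-\tfrac14\bigl(n^{2}+\kappa^{2}-2|V_{0}|\,|U|\bigr)=\tfrac14(\delta-\kappa)(2n-3\delta+\kappa)+\tfrac12\bigl(|V_{0}|\,|U|-(2\delta-\kappa)(n-2\delta)\bigr),
\]
whose right-hand side is a sum of two nonnegative terms. Hence $m(G)\le\delta^{2}+\lfloor\frac14(n-\delta)^{2}\rfloor$, so the hypothesis forces equality throughout.

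From the equality case I would extract the structure of $G$ and derive a contradiction. Equality forces $|E(G[C])|=0$, makes both $G[V_{0}\cup C]$ and $G[U\cup C]$ attain their Mantel bounds, and — since $\frac14(n-\delta)^{2}$ has fractional part $0$ or $\frac14$ — forces the right-hand side of the displayed identity to be at most $\frac14$. But each of the two pieces $\delta-\kappa$ and $|V_{0}|\,|U|-(2\delta-\kappa)(n-2\delta)$ is a nonnegative integer, appearing there with a coefficient (namely $\frac14(2n-3\delta+\kappa)\ge\frac74$, respectively $\frac12$) that exceeds $\frac14$, so both pieces vanish. Thus $\kappa=\delta$ and $|V_{0}|\,|U|=\delta(n-2\delta)$, which together with $|V_{0}|+|U|=n-\delta$ and $|V_{0}|\le|U|$ gives $|V_{0}|=\delta$. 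Then $|V_{0}\cup C|=2\delta$ and the attained Mantel bound forces $G[V_{0}\cup C]\cong K_{\delta,\delta}$. But $C$ is an independent set of size $\delta$ in $K_{\delta,\delta}$, hence $C$ is one side of the bipartition and $V_{0}$ is the other; in particular $G[V_{0}]$ is edgeless. This contradicts the fact that $V_{0}$ is a connected component of $G-C$ with $|V_{0}|=\delta\ge2$, and the proof is complete.

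The step I expect to be the main obstacle is the equality analysis, where several inequalities carry floor functions and one must check that ``$m(G)$ equal to the bound'' really does pin down $\kappa=\delta$ and $|V_{0}|=\delta$ exactly, rather than some near-extremal configuration (such as $\kappa=\delta-1$, or $|V_{0}|=\delta+1$). The displayed identity is the crux: it cleanly splits the total slack into the two integral pieces above, each carried by a coefficient comfortably larger than the available slack of $\frac14$, so both pieces must be zero; the remaining steps are routine.
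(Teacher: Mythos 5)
Your proof is correct, but note that the paper does not actually prove Theorem~\ref{le5.6}: it is imported from Hong, Xia, Chen and Volkmann \cite{HXC} as a quoted result, so there is no in-paper proof to compare against. What you have written is a complete, self-contained derivation. Its skeleton --- partition $V(G)$ into $V_{0}\cup C\cup U$, bound $|E(G[V_{0}\cup C])|$ and $|E(G[U\cup C])|$ by Mantel's theorem (Theorem~\ref{le2.13}), minimize $|V_{0}|\cdot|U|$ at the endpoint of its range, and analyze the equality case --- is exactly the template the paper itself uses for its $q(\overline{G})$ results (Theorems~\ref{th5.3} and~\ref{th5.7}). Two of your steps go beyond what the paper does in the super-$\kappa$ setting and are worth highlighting. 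First, you derive $|V_{i}|\ge 2\delta-\kappa$ directly from triangle-freeness for every (necessarily non-singleton) component; this is needed because Lemma~\ref{le2.10} is only stated for $\kappa\le k-1<\delta$ and cannot be applied when $\kappa=\delta$, a regime the paper's proof of Theorem~\ref{th5.7} enters after invoking Corollary~\ref{th5.5}. Second, your endgame is a clean structural contradiction: since $C$ is independent of size $\delta$ inside $G[V_{0}\cup C]\cong K_{\delta,\delta}$, it must be one side of the bipartition, forcing $V_{0}$ to be edgeless even though it is a connected component on $\delta\ge 2$ vertices; the paper's analogous arguments instead finish with a regularity contradiction on $\overline{G}$. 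I checked the displayed identity (it expands correctly to $\frac{1}{4}(5\delta^{2}-\kappa^{2}-2n\delta+2|V_{0}||U|)$ on both sides) and the quantization step: the total slack $S$ is at most the fractional part of $\frac{1}{4}(n-\delta)^{2}$, hence at most $\frac{1}{4}$, while each of the two nonnegative integer pieces enters $S$ with coefficient at least $\frac{1}{2}$ (indeed $\frac{1}{4}(2n-3\delta+\kappa)\ge 2$ since $n\ge 4\delta-\kappa$), so both vanish and $\kappa=\delta$, $|V_{0}|=\delta$ are forced as you claim. No gaps.
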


\begin{corollary}\label{th5.6}
Let $G$ be a connected triangle-free graph of order $n$ and minimum degree $\delta\geq 2.$  If $$q(G)\geq n-\delta-1+\frac{2\delta^{2}}{n-1}+\lfloor\frac{1}{2}(n-1+\frac{(\delta-1)^{2}}{n-1})\rfloor,$$ then
$G$ is super-$\kappa$.
\end{corollary}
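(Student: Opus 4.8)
The plan is to argue by contradiction and reduce the spectral hypothesis to the size hypothesis of Theorem~\ref{le5.6}, in exactly the way Corollary~\ref{th5.2} is deduced from Theorem~\ref{le5.1}. Suppose, for a contradiction, that $G$ is not super-$\kappa$. Since $G$ is connected, Lemma~\ref{le2.5} gives $q(G)\le \frac{2m}{n-1}+n-2$, so the assumed lower bound on $q(G)$ forces
$$
\frac{2m}{n-1}+n-2\ \ge\ n-\delta-1+\frac{2\delta^{2}}{n-1}+\left\lfloor \tfrac12\Big(n-1+\tfrac{(\delta-1)^{2}}{n-1}\Big)\right\rfloor .
$$
Clearing denominators and solving for $m$ yields
$$
m\ \ge\ \frac{(n-1)(1-\delta)}{2}+\delta^{2}+\frac{n-1}{2}\left\lfloor \tfrac12\Big(n-1+\tfrac{(\delta-1)^{2}}{n-1}\Big)\right\rfloor .
$$

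The engine of the argument is the elementary identity (expand and complete the square)
$$
\frac{(n-1)(1-\delta)}{2}+\frac{n-1}{2}\cdot\frac12\Big(n-1+\frac{(\delta-1)^{2}}{n-1}\Big)\ =\ \frac{\big((n-1)-(\delta-1)\big)^{2}}{4}\ =\ \frac{(n-\delta)^{2}}{4},
$$
so that, once the effect of the floor is accounted for, the previous display gives $m\ge \delta^{2}+\lfloor\tfrac14(n-\delta)^{2}\rfloor$. By Theorem~\ref{le5.6}, a connected triangle-free graph of order $n$ and minimum degree $\delta$ with this many edges is super-$\kappa$, contradicting our assumption and proving the corollary. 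In contrast with the $g\ge3$ super-connectivity statement (Corollary~\ref{th4.10}), no follow-up equality analysis through Lemma~\ref{le2.5} is needed here, since Theorem~\ref{le5.6} admits no exceptional graph: reaching the size threshold already yields the contradiction.

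The step I expect to require the most care is the arithmetic passage from the $q$-bound to $m\ge \delta^{2}+\lfloor\tfrac14(n-\delta)^{2}\rfloor$, namely comparing $\frac{n-1}{2}\big\lfloor \tfrac12(n-1+\tfrac{(\delta-1)^{2}}{n-1})\big\rfloor$ with $\lfloor\tfrac14(n-\delta)^{2}\rfloor$. One has to split into cases according to the parity of $n-1$ and the residue of $(n-\delta)^{2}$ modulo $4$, and use that $m$ is an integer to round up at the end, in order to see that the floor in the hypothesis is positioned so that the rounding it causes is absorbed by the floor on the right-hand side. Everything else — the invocation of Lemma~\ref{le2.5} and of Theorem~\ref{le5.6}, and deriving the contradiction — is routine.
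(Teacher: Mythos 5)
Your overall route is exactly the paper's: apply Lemma \ref{le2.5} to turn the hypothesis on $q(G)$ into a lower bound on $m$, check that this bound meets the threshold of Theorem \ref{le5.6}, and conclude. You are also right that, unlike Corollary \ref{th4.10} or Corollary \ref{th5.2}, no follow-up equality analysis is needed here because Theorem \ref{le5.6} admits no exceptional graph, and your completing-the-square identity $\frac{(n-1)(1-\delta)}{2}+\frac{n-1}{2}\cdot\frac12\bigl(n-1+\frac{(\delta-1)^2}{n-1}\bigr)=\frac{(n-\delta)^2}{4}$ is correct.

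The gap is precisely the step you flag and then assert without carrying out: that the floor in the hypothesis ``is absorbed'' by the floor in $\delta^2+\lfloor\frac14(n-\delta)^2\rfloor$. It is not. Writing $A=\frac12\bigl(n-1+\frac{(\delta-1)^2}{n-1}\bigr)$, your derived bound is $m\ge \delta^2+\frac{(n-\delta)^2}{4}-\frac{n-1}{2}\{A\}$, where $\{A\}$ denotes the fractional part; the loss $\frac{n-1}{2}\{A\}$ can be nearly $\frac{n-1}{2}$, whereas the total slack available (the fractional part of $\frac{(n-\delta)^2}{4}$, which is at most $\frac14$, plus less than $1$ from rounding the integer $m$ up) is below $2$. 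Concretely, for $n=20$, $\delta=5$ one gets $A=\frac{377}{38}$, $\lfloor A\rfloor=9$, and the hypothesis combined with Lemma \ref{le2.5} yields only $m\ge 73$, while Theorem \ref{le5.6} requires $m\ge \delta^2+\lfloor\frac{(n-\delta)^2}{4}\rfloor=81$; no parity case analysis closes a gap of this size. In fairness, the paper's own proof asserts the identical implication in one line with no justification, so you have reproduced its argument faithfully, weak point included. The deduction does go through verbatim if the floor is dropped from the hypothesis, i.e.\ under the marginally stronger assumption $q(G)\ge n-\delta-1+\frac{2\delta^2}{n-1}+\frac12\bigl(n-1+\frac{(\delta-1)^2}{n-1}\bigr)$, since then $m\ge\delta^2+\frac{(n-\delta)^2}{4}\ge\delta^2+\lfloor\frac{(n-\delta)^2}{4}\rfloor$ directly.
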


\begin{proof}
By assumption and Lemma \ref{le2.5}, we have
$$n-\delta-1+\frac{2\delta^{2}}{n-1}+\lfloor\frac{1}{2}(n-1+\frac{(\delta-1)^{2}}{n-1})\rfloor \leq q(G) \leq \frac{2m}{n-1}+n-2,$$
and so $m\geq\delta^{2}+\lfloor\frac{1}{4}(n-\delta)^{2}\rfloor$. By Theorem \ref{le5.6}, $G$ is super-$\kappa$.
\hspace*{\fill}$\Box$
\end{proof}

\begin{theorem}\label{th5.7}
Let $G$ be a connected triangle-free graph of order $n$ and minimum degree $\delta\geq 2,$ and $\overline{G}$ be connected.
If
\begin{equation}\label{3bb}
q(\overline{G})\leq 2(n-1)-\frac{4\delta^{2}}{n}-\lfloor\frac{(n-\delta)^{2}}{n}\rfloor,
\end{equation}
then $G$ is super-$\kappa$.
\end{theorem}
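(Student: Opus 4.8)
The plan is to argue by contradiction, mirroring the scheme behind Theorem~\ref{th5.3}: bound $m(G)$ from above using a vertex-cut decomposition together with the Mantel--Tur\'an inequality (Theorem~\ref{le2.13}), convert this into a lower bound on $m(\overline{G})$, and then invoke Lemma~\ref{le2.1} to contradict (\ref{3bb}). So assume $G$ is not super-$\kappa$, and let $C$ be a minimum vertex-cut that does not isolate a vertex of minimum degree, with $\kappa:=|C|=\kappa(G)\le\delta$ by Whitney's inequality. If $C$ isolated some vertex $v$, then $\delta\le d_G(v)\le|C|=\kappa\le\delta$, forcing $d_G(v)=\delta$, so $C$ would isolate a minimum-degree vertex, a contradiction. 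Hence every component of $G-C$ has at least two vertices; since $C$ is a genuine cut, $G-C$ has $t\ge2$ components, say $V_0,\dots,V_{t-1}$ with $|V_0|\le\cdots\le|V_{t-1}|$, and we set $U=\bigcup_{i=1}^{t-1}V_i$, so $|V_0|+|U|=n-\kappa$.

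The first substantive step is the component-size estimate, which is the only place the triangle-free hypothesis enters and is precisely what makes the super-$\kappa$ threshold strictly larger than the maximally-connected one. Fix a component $V_i$; since $|V_i|\ge2$ and $G[V_i]$ is connected, it contains an edge $uv$, and triangle-freeness gives $N_G(u)\cap N_G(v)=\emptyset$ while $N_G(u)\cup N_G(v)\subseteq V_i\cup C$. Hence $|V_i|+\kappa=|V_i\cup C|\ge|N_G(u)|+|N_G(v)|\ge2\delta$, i.e. $|V_i|\ge2\delta-\kappa$ (this is the value $\nu(\delta,4,\kappa)$, but the short argument above also handles the case $\kappa=\delta$, which is needed here). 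Consequently $2\delta-\kappa\le|V_0|\le|U|\le n-2\delta$, and in particular $n\ge4\delta-\kappa\ge3\delta$.

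Next I would estimate $m(G)$. Both $G[V_0\cup C]$ and $G[U\cup C]$ are triangle-free, so Theorem~\ref{le2.13} applied to each, together with $m(G)=|E(G[V_0\cup C])|+|E(G[U\cup C])|-|E(G[C])|\le|E(G[V_0\cup C])|+|E(G[U\cup C])|$ and the identity $(|V_0|+\kappa)^2+(|U|+\kappa)^2=n^2+\kappa^2-2|V_0|\,|U|$, gives
\[
m(G)\le\Bigl\lfloor\tfrac{(|V_0|+\kappa)^2}{4}\Bigr\rfloor+\Bigl\lfloor\tfrac{(|U|+\kappa)^2}{4}\Bigr\rfloor\le\Bigl\lfloor\tfrac{n^2+\kappa^2}{4}-\tfrac{|V_0|\,|U|}{2}\Bigr\rfloor.
\]
Since $x\mapsto x(n-\kappa-x)$ is concave and the admissible interval for $|V_0|$ has left endpoint $2\delta-\kappa$, we get $|V_0|\,|U|\ge(2\delta-\kappa)(n-2\delta)$, whence $m(G)\le\delta^2+\lfloor\tfrac14(n-2\delta+\kappa)^2\rfloor\le\delta^2+\lfloor\tfrac14(n-\delta)^2\rfloor$, the last inequality because $0<n-2\delta+\kappa\le n-\delta$. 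Therefore $m(\overline{G})=\binom n2-m(G)\ge\binom n2-\delta^2-\lfloor\tfrac14(n-\delta)^2\rfloor$, and Lemma~\ref{le2.1} together with the elementary floor manipulation carried out in the proof of Theorem~\ref{th5.3} yields $q(\overline{G})\ge\tfrac{4m(\overline{G})}{n}\ge2(n-1)-\tfrac{4\delta^2}{n}-\lfloor\tfrac{(n-\delta)^2}{n}\rfloor$. Comparing with (\ref{3bb}) forces equality throughout; in particular $q(\overline{G})=\tfrac{4m(\overline{G})}{n}$, so by Lemma~\ref{le2.1} (recall $\overline{G}$ is connected) $\overline{G}$ is regular, and tracing the equalities back pins down $\kappa=\delta$, $|V_0|=\delta$, $|U|=n-\delta$, $G[C]=\overline{K_\delta}$, $G[V_0\cup C]=K_{\delta,\delta}$ and $G[U\cup C]=K_{\lfloor(n-\delta)/2\rfloor,\lceil(n-\delta)/2\rceil}$. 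But then every vertex of $V_0$ has degree $\delta$, whereas every vertex of $C$ has degree $\delta$ plus a positive contribution from $G[U\cup C]$ (nonempty since $n\ge3\delta$), so $G$ is not regular; hence $\overline{G}$ is not regular either, contradicting what was just deduced.

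I expect the main obstacle to be twofold: obtaining the sharp triangle-free component bound $|V_i|\ge2\delta-\kappa$ in the super-$\kappa$ regime (short, but it is exactly what separates this threshold from the maximally-connected one and must be verified to remain valid when $\kappa=\delta$), and pushing the floor bookkeeping through so that the final estimate lands precisely on the right-hand side of (\ref{3bb}) — a step I would reproduce verbatim from the proof of Theorem~\ref{th5.3}. The concluding regularity clash that closes the argument is routine once the extremal structure has been identified.
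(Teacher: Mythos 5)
Your proposal is correct and follows essentially the same route as the paper: decompose along the cut, bound $m(G)$ by $\delta^2+\lfloor\frac{1}{4}(n-\delta)^2\rfloor$ via Theorem~\ref{le2.13}, pass to $m(\overline{G})$ and Lemma~\ref{le2.1}, and close with the equality analysis and the regularity contradiction. The only real deviations are cosmetic improvements: the paper first invokes Corollary~\ref{th5.5} to force $\kappa=\delta$ and then cites Lemma~\ref{le2.10} for $|V_0|\ge 2\delta-\kappa$ (formally outside that lemma's hypothesis $\kappa<\delta$), whereas you keep $\kappa\le\delta$ throughout and prove the component bound directly from triangle-freeness, which actually patches that small gap; note also the slip $|U|=n-\delta$ in your equality analysis, which should read $|U|=n-2\delta$.
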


\begin{proof}
Suppose that $G$ is not super-$\kappa$. Note that
\begin{eqnarray*}
q(\overline{G})&\leq& 2(n-1)-\frac{4\delta^{2}}{n}-\lfloor\frac{(n-\delta)^{2}}{n}\rfloor\\
               &\leq& 2(n-1)-\frac{4\delta^{2}}{n}-\lfloor\frac{(n-\delta-1)^{2}}{n}\rfloor,
\end{eqnarray*}
by Corollary \ref{th5.5}, we have $\kappa=\delta.$

Let $C$ be the minimum vertex-cut with $|C|=\kappa=\delta.$ Let $V_{0}, V_{1}, \ldots, V_{t-1}$~$(t\geq2)$ be the vertex sets of connected components of $G-C$ with $2\leq|V_{0}|\leq|V_{1}|\leq \cdots \leq|V_{t-1}|$. Let $U=\bigcup_{i=1}^{t-1}V_{i}.$ Then $|V_{0}|+|U|=n-\kappa$. By Lemma \ref{le2.10} with $g\geq4,$ then $|V_{0}|\geq \nu(\delta, g, \kappa)\geq\nu(\delta, 4, \kappa)=2\delta-\kappa.$
So we have
$$2\leq\delta=2\delta-\kappa \leq |V_{0}| \leq |U| \leq n-2\delta.$$
By Theorem \ref{le2.13}, we have
\begin{eqnarray*}
  m(G) &=& |E(G[V_{0}\cup C])|+|E(G[U\cup C])|-|E(G[C])|\\
      &\leq& \lfloor\frac{(|V_{0}|+|C|)^{2}}{4}\rfloor+\lfloor\frac{(|U|+|C|)^{2}}{4}\rfloor-|E(G[C])|\\
      &\leq& \lfloor\frac{(|V_{0}|+|C|)^{2}}{4}\rfloor+\lfloor\frac{(|U|+|C|)^{2}}{4}\rfloor\\
       &=& \lfloor\frac{(|V_{0}|+|U|+|C|)^{2}+|C|^{2}}{4}-\frac{|V_{0}|\cdot|U|}{2}\rfloor\\
       &=& \lfloor\frac{n^{2}+\kappa^{2}}{4}-\frac{|V_{0}|\cdot|U|}{2}\rfloor
\leq \lfloor\frac{n^{2}+\kappa^{2}}{4}-\frac{\delta \cdot(n-2\delta)}{2}\rfloor\\
       &=& \delta^{2}+\lfloor\frac{(n-\delta)^{2}}{4}\rfloor
\end{eqnarray*}
Since $m(G)+m(\overline{G})=\frac{n(n-1)}{2},$ then
\begin{equation}\label{3c}
m(\overline{G})= \frac{n(n-1)}{2}-m(G)\geq \frac{n(n-1)}{2}-\delta^{2}-\lfloor\frac{(n-\delta)^{2}}{4}\rfloor.
\end{equation}
By Lemma \ref{le2.1},
\begin{equation}\label{3d}
q(\overline{G})\geq\frac{4m(\overline{G})}{n}\geq \frac{4}{n}[\frac{n(n-1)}{2}-\delta^{2}-\lfloor\frac{(n-\delta)^{2}}{4}\rfloor]
                = 2(n-1)-\frac{4\delta^{2}}{n}-\lfloor\frac{(n-\delta)^{2}}{n}\rfloor.
\end{equation}
Combine (\ref{3bb}) and (\ref{3d}) to get $q(\overline{G})=2(n-1)-\frac{4\delta^{2}}{n}-\lfloor\frac{(n-\delta)^{2}}{n}\rfloor.$
Then all the inequalities in (\ref{3c}) and (\ref{3d}) must be equalities. It follows that $|C|=\delta,$ $|V_{0}|=\delta,$ $|U|=n-2\delta,$ $|E(G[C])|=0,$
$|E(G[V_{0}\cup C])|=\delta^{2},$  $|E(G[U\cup C])|=\lfloor\frac{(n-\delta)^{2}}{4}\rfloor$, $G[V_{0}\cup C]=K_{\lfloor \frac{(|V_{0}|+|C|)}{2}\rfloor, \lceil \frac{(|V_{0}|+|C|)}{2}\rceil}$, $G[U\cup C]=K_{\lfloor \frac{(|U|+|C|)}{2}\rfloor, \lceil \frac{(|U|+|C|)}{2}\rceil}$ and $\overline{G}$ is regular.
Therefore, $G[C]=\overline{K_{\delta}}$, $G[V_{0}\cup C]=K_{\delta, \delta}$ and $G[U\cup C]=K_{\lfloor\frac{n-\delta}{2}\rfloor, \lceil\frac{n-\delta}{2}\rceil}$. Howerver, $G$ is not regular. So $\overline{G}$ cannot be regular, a contradiction.
\hspace*{\fill}$\Box$
\end{proof}

\noindent
{\bf Acknowledgement.} The research of Huicai Jia is supported by NSFC (No.~11701148) and Natural Science Foundation of Education Ministry of Henan Province (18B110005). The research of Hong-Jian Lai is supported by NSFC (Nos.~11771039 and 11771443).
The research of Ruifang Liu is supported by Outstanding Young Talent Research Fund of Zhengzhou University (No.~1521315002), China Postdoctoral Science Foundation (No.~2017M612410) and Foundation for University Key Teacher of Henan Province (No.~2016GGJS-007).

\end{document}